\title{Noetherian Quasi-Polish Spaces\thanks{This work was supported by JSPS Core-to-Core Program, A. Advanced Research Networks. The first author was supported by JSPS KAKENHI Grant Number 15K15940. The second author was supported by the ERC inVEST (279499)
project.}}
\author{
Matthew de Brecht
\institute{Graduate School of Human and Environmental Studies\\ Kyoto University, Japan\\}
\email{matthew@i.h.kyoto-u.ac.jp}
\and
Arno Pauly
\institute{D\'epartement d'Informatique\\ Universit\'e libre de
Bruxelles, Belgium\\}
\email{Arno.M.Pauly@gmail.com}
}
\begin{document} \theoremstyle{definition}
\newtheorem{theorem}{Theorem}
\newtheorem{definition}[theorem]{Definition}
\newtheorem{problem}[theorem]{Problem}
\newtheorem{assumption}[theorem]{Assumption}
\newtheorem{corollary}[theorem]{Corollary}
\newtheorem{proposition}[theorem]{Proposition}
\newtheorem{lemma}[theorem]{Lemma}
\newtheorem{observation}[theorem]{Observation}
\newtheorem{fact}[theorem]{Fact}
\newtheorem{question}[theorem]{Question}
\newtheorem{example}[theorem]{Example}
\newcommand{\dom}{\operatorname{dom}}
\newcommand{\Dom}{\operatorname{Dom}}
\newcommand{\codom}{\operatorname{CDom}}
\newcommand{\id}{\textnormal{id}}
\newcommand{\Cantor}{\{0, 1\}^\mathbb{N}}
\newcommand{\Baire}{\mathbb{N}^\mathbb{N}}
\newcommand{\uint}{{[0,1]}}
\newcommand{\lev}{\textnormal{Lev}}
\newcommand{\hide}[1]{}
\newcommand{\mto}{\rightrightarrows}
\newcommand{\pls}{\textsc{PLS}}
\newcommand{\ppad}{\textsc{PPAD}}
\newcommand{\mlr}{\textrm{MLR}}
\newcommand{\layer}{\textrm{Layer}}
\newcommand{\blayer}{\textrm{BLayer}}
\newcommand{\sierp}{Sierpi\'nski}
\newcommand{\C}{\textrm{C}}
\newcommand{\lpo}{\textrm{LPO}}
\newcommand{\const}{\mathfrak{C}}
\newcommand{\2}{\mathbf{2}}
\newcommand{\name}[1]{\textsc{#1}}
\newcommand{\sat}{\uparrow}
\newcommand\tboldsymbol[1]{%
\protect\raisebox{0pt}[0pt][0pt]{%
$\underset{\widetilde{}}{\boldsymbol{#1}}$}\mbox{\hskip 1pt}}

\newcommand{\bolds}{\tboldsymbol{\Sigma}}
\newcommand{\boldp}{\tboldsymbol{\Pi}}
\newcommand{\boldd}{\tboldsymbol{\Delta}}
\newcommand{\boldg}{\tboldsymbol{\Gamma}}

\maketitle

\begin{abstract}
In the presence of suitable power spaces, compactness of $\mathbf{X}$ can be characterized as the singleton $\{X\}$ being open in the space $\mathcal{O}(\mathbf{X})$ of open subsets of $\mathbf{X}$. Equivalently, this means that universal quantification over a compact space preserves open predicates.

Using the language of represented spaces, one can make sense of notions such as a $\Sigma^0_2$-subset of the space of $\Sigma^0_2$-subsets of a given space. This suggests higher-order analogues to compactness: We can, e.g.~, investigate the spaces $\mathbf{X}$ where $\{X\}$ is a $\Delta^0_2$-subset of the space of $\Delta^0_2$-subsets of $\mathbf{X}$. Call this notion $\nabla$-compactness. As $\Delta^0_2$ is self-dual, we find that both universal and existential quantifier over $\nabla$-compact spaces preserve $\Delta^0_2$ predicates.

Recall that a space is called Noetherian iff every subset is compact. Within the setting of Quasi-Polish spaces, we can fully characterize the $\nabla$-compact spaces: A Quasi-Polish space is Noetherian iff it is $\nabla$-compact. Note that the restriction to Quasi-Polish spaces is sufficiently general to include plenty of examples.
\end{abstract}

\section{Introduction}

\subsubsection*{Noetherian spaces}
\begin{definition}
\label{def:noetherian}
A topological space $\mathbf{X}$ is called \emph{Noetherian}, iff every strictly ascending chain of open sets is finite.
\end{definition}

Noetherian spaces were first studied in algebraic geometry. Here, the prime motivation is that the Zariski topology on the spectrum of a Noetherian commutative ring is Noetherian (which earns the Noetherian spaces their name).

The relevance of Noetherian spaces for computer science was noted by \name{Goubault-Larrecq} \cite{goubault2}, based on their relationship to well quasiorders. Via well-structured transition systems \cite{finkel}, well quasiorders are used in verification to prove decidability of termination and related properties. Unfortunately, well quasiorders lack some desirable closure properties (the standard counterexample is due to \name{Rado} \cite{rado}), which led to the introduction of better quasiorders by \name{Nash-Williams} \cite{nashwilliams}, which is a more restrictive notion avoiding the shortcomings of well quasiorders.

Noetherian spaces generalize well-quasi orders: The Alexandrov topology on a quasi-order is Noetherian iff the quasi-order is a well-quasi order. As shown by \name{Goubault-Larrecq}  \cite{goubault}, results on the preservation of well-quasi orders under various constructions (such as Higman's Lemma or Kruskal's Tree Theorem \cite{goubault3}) extend to Noetherian spaces; furthermore, Noetherian spaces exhibit some additional closure properties, e.g.~the Hoare space of a Noetherian space is Noetherian again \cite{goubault2}. The usefulness of Noetherian spaces for verification is detailed by \name{Goubault-Larrecq}  in \cite{goubault4}.

\subsubsection*{Quasi-Polish spaces}
A countably-based topological space is called quasi-Polish if its topology can be derived from a Smyth-complete quasi-metric. Quasi-Polish spaces were introduced by \name{dB.} in \cite{debrecht6} as a joint generalization of Polish spaces and $\omega$-continuous domains in order to satisfy the desire for a unified setting for descriptive set theory in those areas (expressed e.g.~by \name{Selivanov} \cite{selivanov3}).

\subsubsection*{Synthetic DST}
Synthetic descriptive set theory as proposed by the authors in \cite{paulydebrecht2-lics} reinterprets descriptive set theory in a category-theoretic context. In particular, it provides notions of lifted counterparts to topological concepts such as open sets (e.g.~$\Sigma$-classes from descriptive set theory), compactness, and so on.

\subsubsection*{Our contributions}
In the present paper, we will study Noetherian quasi-Polish spaces. As our main result, we show that in the setting of quasi-Polish spaces, being Noetherian is the $\Delta^0_2$-analogue to compactness. We present the result in two different incarnations: Theorem \ref{theo:finitecover} states the result in the language of traditional topology. Theorem \ref{theo:nablacompact} then restates the main result in the language of synthetic topology, which first requires us to define a computable version of being Noetherian (Definition \ref{def:real}). The second instance in particular has as a consequence that universal and existential quantification over Noetherian spaces preserves $\Delta^0_2$-predicates -- and this characterizes Noetherian spaces (Proposition \ref{prop:quantifierelimination}).

\subsubsection*{Structure of the article}
In Section \ref{sec:quasipolish} we recall some results on Noetherian spaces and on quasi-Polish spaces, and then prove some observations on Noetherian quasi-Polish spaces. In particular, Theorem \ref{theo:finitecover} shows that for quasi-Polish spaces, being Noetherian is equivalent to any $\Delta^0_2$-cover admitting a finite subcover. This section requires only some basic background from topology.

Section \ref{prop:quantifierelimination} introduces the additional background material we need for the remainder of the paper, in particular from computable analysis and synthetic topology.

In Section \ref{sec:computablynoetherian} we investigate how Noetherian spaces ought to be defined in synthetic topology (\name{Escard\'o} \cite{escardo}), specifically in the setting of the category of represented spaces (\name{P.} \cite{pauly-synthetic}). As an application, we show that computable well-quasiorders give rise to $\nabla$-computably Noetherian spaces.

Our main result will be presented in Section \ref{sec:nablacompact}: The Noetherian spaces can be characterized amongst the quasi-Polish spaces as those allowing quantifier elimination over $\Delta^0_2$-statements (Theorem \ref{theo:nablacompact} and Corollary \ref{corr:quantifier}). The core idea is that just as compact spaces are characterized by $\{X\}$ being an open subset of the space $\mathcal{O}(\mathbf{X})$ of open subsets, the Noetherian spaces are (amongst the quasi-Polish) characterized by $\{X\}$ being a $\Delta^0_2$-subset of the space of $\Delta^0_2$-subsets.

Looking onwards, we briefly discuss potential future extensions of characterizations of higher-order analogues to compactness and overtness in Section \ref{sec:othernotions}.

\section{Initial observations on Noetherian quasi-Polish spaces}
\label{sec:quasipolish}

\subsection{Background on Quasi-Polish spaces}

Recall that a quasi-metric on $\mathbf{X}$ is a function $d : \mathbf{X} \times \mathbf{X} \to [0,\infty)$ such that $x = y \Leftrightarrow d(x,y) = d(y,x) = 0$ and $d(x,z) \leq d(x,y) + d(y,z)$. A quasi-metric induces a topology via the basis $(B(x,2^{-k}) := \{y \in \mathbf{X} \mid d(x,y) < 2^{-k}\})_{x \in \mathbf{X},k \in \mathbb{N}}$. A topological space is called quasi-Polish, if it is countably-based and the topology can be obtained from a Smyth-complete quasi-metric (from \name{Smyth} \cite{smyth}). For details we refer to \cite{debrecht6}, and only recall some select results to be used later on here.

\begin{proposition}[\name{dB.} \cite{debrecht6}]
\label{prop:quasisubspace}
A subspace of a quasi-Polish space is a quasi-Polish space iff it is a $\Pi^0_2$-subspace.
\end{proposition}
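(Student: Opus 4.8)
The plan is to prove the two implications separately, in each case passing to the ``universal'' quasi-Polish space and exploiting the transitivity of the $\Pi^0_2$-relation between subspaces. Recall that a set is $\Pi^0_2$ precisely when it can be written as $\{x \mid \forall n\,(x \in U_n \Rightarrow x \in V_n)\}$ for two sequences $(U_n)_n$, $(V_n)_n$ of open sets. Since the subspace topology turns traces of open sets into open sets, and this representation is preserved under intersecting with a subspace, one obtains the transitivity fact: whenever $A$ is $\Pi^0_2$ in $\mathbf{X}$ and $\mathbf{X}$ is (homeomorphic to) a $\Pi^0_2$-subspace of some $\mathbf{Y}$, then $A$ is $\Pi^0_2$ in $\mathbf{Y}$. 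I would also use that every quasi-Polish space embeds as a $\Pi^0_2$-subspace of $\mathcal{P}(\mathbb{N})$ with the Scott topology, which is itself quasi-Polish and carries a canonical Smyth-complete quasi-metric; this gives a single convenient ambient space in which to argue the harder direction.

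For the implication from $\Pi^0_2$ to quasi-Polish I would adapt the classical argument that a $G_\delta$-subset of a Polish space is again Polish, working directly inside $\mathbf{X}$. Starting from a Smyth-complete quasi-metric $d$ on $\mathbf{X}$ and the representation $A = \{x \mid \forall n\,(x \in U_n \Rightarrow x \in V_n)\}$ (so that $A$ avoids each difference $U_n \setminus V_n$), I would perturb $d$ on $A$ by adding, for each $n$, a correction term built from the open sets $U_n$ and $V_n$ that diverges as a point of $A$ approaches $U_n \setminus V_n$, thereby preventing any forward-Cauchy sequence in the new quasi-metric from escaping $A$. The new quasi-metric must then be shown to (i) induce the subspace topology on $A$ and (ii) remain Smyth-complete. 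The delicate point is the asymmetry: the correction terms have to be assembled from the open sets in a way that respects the direction of the quasi-metric, so that forward-Cauchy sequences still converge in the correct sense; this is where the argument genuinely departs from the symmetric Polish case.

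For the converse, the key tool is a Lavrentiev-type extension theorem for quasi-Polish spaces: a homeomorphism between two subspaces of quasi-Polish spaces extends to a homeomorphism between $\Pi^0_2$-supersets. Given a subspace $A \subseteq \mathbf{X}$ that is quasi-Polish in its own right, I would fix an embedding $h$ of $A$ onto a $\Pi^0_2$-subspace of $\mathcal{P}(\mathbb{N})$ and compare it with the restriction to $A$ of the embedding of $\mathbf{X}$ into $\mathcal{P}(\mathbb{N})$; the extension theorem applied to the resulting partial homeomorphism shows that $A$ coincides with the trace on $\mathbf{X}$ of a $\Pi^0_2$-set, whence $A$ is $\Pi^0_2$ in $\mathbf{X}$ by the transitivity fact. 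I expect this extension theorem to be the main obstacle. In the Polish setting it rests on the oscillation function of a continuous map together with completeness of the metric; in the quasi-Polish setting one must instead verify that the set on which a continuous partial map extends continuously is $\Pi^0_2$, and that Smyth-completeness forces the extension to remain inside the intended image. Establishing this $\Pi^0_2$-extension lemma is the technical heart of the whole proposition.
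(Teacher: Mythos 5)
First, note that the paper does not prove this proposition at all: it is imported verbatim from \cite{debrecht6}, so the only meaningful comparison is with the proof in that cited source. Measured against that, your proposal is a plan with two unproven technical cores rather than a proof. For the direction ``$\Pi^0_2$ implies quasi-Polish'' your strategy (perturb the Smyth-complete quasi-metric \`a la Alexandrov, with one correction term per pair $(U_n,V_n)$) is essentially the right idea and is close to how the cited source proceeds; but you only gesture at the two points where the classical argument genuinely breaks: the correction terms must respect the asymmetry of $d$, and --- more importantly, and not mentioned in your sketch --- limits of Cauchy sequences are not unique in a $T_0$ space, so one cannot argue as in the metric case that a forward-Cauchy sequence trapped near $A$ converges \emph{to a point of $A$}; Smyth-completeness is formulated in terms of convergence in the symmetrized metric precisely to control this, and the new quasi-metric has to be shown Smyth-complete in that sense. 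Your ``transitivity'' observation for $\Pi^0_2$ sets (using the Selivanov-style normal form $\forall n\,(x\in U_n\Rightarrow x\in V_n)$) is correct and is indeed the right way to reduce to the ambient space $\mathcal{P}(\omega)$.

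The serious gap is in the converse. You hang the entire direction ``quasi-Polish subspace implies $\Pi^0_2$'' on a Lavrentiev-type extension theorem for quasi-Polish spaces, which you neither prove nor cite, and which is not how \cite{debrecht6} argues (there the hard direction is obtained by showing directly that the image of a quasi-Polish space under the standard basis embedding into $\mathcal{P}(\omega)$ is $\Pi^0_2$: the image is cut out by countably many conditions of the form ``open implies open'' expressing that a set of basic-open indices is the neighbourhood filter of a point, with Smyth-completeness supplying the existence of that point). The classical proof of Lavrentiev's theorem rests on the oscillation function of a map into a complete \emph{metric} space and on uniqueness of limits; both ingredients fail or need genuine replacement in the asymmetric $T_0$ setting, so ``I expect this extension theorem to hold'' is exactly the content of the proposition in disguise, not a reduction of it. (Granting the extension theorem, your deduction that $e(A)=(h^*)^{-1}(B)$ does go through, by injectivity of $h^*$ together with surjectivity of $h$ onto $B$ --- you should say this explicitly --- but as it stands the argument is circular in difficulty.) To repair the proposal you should either prove the quasi-Polish Lavrentiev theorem, or replace it by the direct $\Pi^0_2$-description of the image of the standard embedding.
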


\begin{corollary}
\label{corr:quasisingleton}
In a quasi-Polish space each singleton is $\Pi^0_2$.
\end{corollary}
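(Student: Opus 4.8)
The plan is to invoke Proposition \ref{prop:quasisubspace}, which characterizes the $\Pi^0_2$-subspaces of a quasi-Polish space as exactly the quasi-Polish subspaces. Thus, to show that a singleton $\{x\}$ of a quasi-Polish space $\mathbf{X}$ is $\Pi^0_2$, it suffices to verify that $\{x\}$, carrying the subspace topology, is itself a quasi-Polish space.

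First I would note that for any point $x$ of $\mathbf{X}$, the subspace $\{x\}$ is simply a one-point topological space, whose only open sets are $\emptyset$ and $\{x\}$. Such a space is trivially countably based, and its topology is induced by the quasi-metric defined by $d(x,x) = 0$, since the induced basic open ball $B(x, 2^{-k})$ equals $\{x\}$ for every $k$. This quasi-metric is Smyth-complete: the only sequence available is the constant one, which is Cauchy and converges, so the completeness condition holds vacuously. Hence $\{x\}$ is quasi-Polish.

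Applying Proposition \ref{prop:quasisubspace} to the quasi-Polish subspace $\{x\} \subseteq \mathbf{X}$ then immediately yields that $\{x\}$ is a $\Pi^0_2$-subspace of $\mathbf{X}$, which is the claim.

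I do not anticipate any genuine obstacle here: the entire substance of the argument is carried by the nontrivial Proposition \ref{prop:quasisubspace}, and all that remains for this corollary is the essentially definitional observation that a one-point space is quasi-Polish.
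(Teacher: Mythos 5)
Your proof is correct and takes exactly the route the paper intends: the corollary is stated as an immediate consequence of Proposition \ref{prop:quasisubspace}, with the only remaining content being the (trivial) observation that a one-point subspace is quasi-Polish. Your explicit verification of that observation via the zero quasi-metric is fine.
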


\begin{proposition}[\name{dB.} \cite{debrecht6}]
A space is quasi-Polish iff it is homoeomorphic to a $\Pi^0_2$-subspace of the Scott domain $\mathcal{P}(\omega)$.
\end{proposition}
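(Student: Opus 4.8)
The plan is to prove both implications, with the reverse (embedding) direction carrying essentially all of the work.

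For the \emph{if} direction I would first check that $\mathcal{P}(\omega)$, equipped with the Scott topology, is itself quasi-Polish. A countable base is given by the finitely generated up-sets $\sat F = \{A \mid F \subseteq A\}$ for finite $F \subseteq \omega$, and a compatible quasi-metric is
\[ d(A,B) = \sum_{n \in A \setminus B} 2^{-n-1}. \]
I would verify that $d(A,B) = d(B,A) = 0$ forces $A = B$, that the triangle inequality follows from $A \setminus C \subseteq (A \setminus B) \cup (B \setminus C)$, that the induced topology is the Scott topology (its specialization order being $\subseteq$), and that $d$ is Smyth-complete, since a forward-Cauchy sequence stabilizes on each coordinate and therefore converges. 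Granting this, the \emph{if} direction is immediate from Proposition~\ref{prop:quasisubspace}: a $\Pi^0_2$-subspace of the quasi-Polish space $\mathcal{P}(\omega)$ is again quasi-Polish.

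For the \emph{only if} direction, fix a countable base $(U_n)_{n \in \omega}$ of the quasi-Polish space $\mathbf{X}$ and consider the canonical map $\iota : \mathbf{X} \to \mathcal{P}(\omega)$ given by $\iota(x) = \{ n \mid x \in U_n \}$. Since quasi-metrizable spaces are $T_0$, distinct points are separated by a basic open set, so $\iota$ is injective. It is a topological embedding because $\iota^{-1}(\sat\{n\}) = U_n$ while $\iota(U_n) = \iota(\mathbf{X}) \cap \sat\{n\}$, so $\iota$ simultaneously pulls back and pushes forward the generators of the two topologies. The entire difficulty is thus concentrated in showing that the image $\iota(\mathbf{X})$ is a $\Pi^0_2$-subset of $\mathcal{P}(\omega)$.

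To pin down the image I would characterize which $A \subseteq \omega$ arise as $\iota(x)$, i.e.~for which $A$ the family $\{U_n \mid n \in A\}$ is the trace on the base of a genuine neighborhood filter. This splits into three conditions: an upward-closure condition (if $n \in A$ and $U_n \subseteq U_m$ then $m \in A$), which is closed, hence $\Pi^0_1$; a roundedness condition ($\forall n \in A\,\exists k \in A$ with $U_k$ contained in $U_n$ and of strictly smaller radius), a $\forall\exists$ statement over open conditions, hence $\Pi^0_2$; and the requirement that the shrinking family of balls selected by $A$ actually converges to a point lying in exactly the $U_n$ with $n \in A$. The main obstacle is this last point: for an arbitrary countably based $T_0$-space the set of admissible $A$ can be of strictly higher complexity, and it is precisely Smyth-completeness that caps it, guaranteeing that every forward-Cauchy descending family encoded by a rounded $A$ has an honest limit $x$ with $\iota(x) = A$, and allowing the convergence requirement to be rephrased as a further Cauchy-type $\forall\exists$ condition of complexity $\Pi^0_2$. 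Intersecting the three conditions keeps us within $\Pi^0_2$, and I expect the delicate part to be the bookkeeping that expresses the convergence condition through the quasi-metric rather than through an abstract limit.
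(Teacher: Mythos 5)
The paper itself gives no proof here: the proposition is quoted from \cite{debrecht6}, where it is obtained in a few lines from the subspace characterization that appears in this paper as Proposition~\ref{prop:quasisubspace}. Your \emph{if} direction follows exactly that route and is fine (the quasi-metric on $\mathcal{P}(\omega)$, its Smyth-completeness, and the appeal to Proposition~\ref{prop:quasisubspace} are all correct). The gap is in the \emph{only if} direction. Having correctly set up the standard embedding $\iota(x)=\{n \mid x \in U_n\}$, you reduce everything to showing $\iota(\mathbf{X}) \in \Pi^0_2(\mathcal{P}(\omega))$ and then do not actually show it: your third condition (``the balls selected by $A$ converge to a point lying in exactly the $U_n$ with $n \in A$'') is just a restatement of ``$A \in \iota(\mathbf{X})$'', and the promised rewriting of it as a $\Pi^0_2$ condition is precisely where all the content lives --- in particular the ``exactly'' half, $x \in U_m \Rightarrow m \in A$, is delicate because $\{A \mid m \notin A\}$ is closed rather than open in $\mathcal{P}(\omega)$. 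There is also a local error: the upward-closure condition is not closed. For fixed $n \neq m$ the set $\{A \mid n \in A \Rightarrow m \in A\}$ is the union of the closed set $\{A \mid n \notin A\}$ with the open set $\{A \mid m \in A\}$; its complement $\{A \mid n \in A,\ m \notin A\}$ contains $\{n\}$ but no Scott-open neighbourhood of $\{n\}$, since every such neighbourhood is upward closed and hence contains $\{n,m\}$. The condition is still $\Pi^0_2$ --- a countable intersection of sets of the form $C \cup U$ with $C$ closed and $U$ open is exactly the normal form of $\Pi^0_2$ in the Selivanov-style hierarchy used for non-metrizable spaces --- so your complexity budget survives, but not for the reason you give.

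The missing idea is that Proposition~\ref{prop:quasisubspace}, which you already invoke in one direction, disposes of the other direction as well: $\iota$ is a homeomorphism onto its image, so $\iota(\mathbf{X})$ with the subspace topology is a quasi-Polish space sitting inside the quasi-Polish space $\mathcal{P}(\omega)$, and the ``only if'' half of Proposition~\ref{prop:quasisubspace} immediately says it is $\Pi^0_2$; no direct complexity computation on the image is needed. What you are attempting instead is essentially a re-proof of that half of Proposition~\ref{prop:quasisubspace} in the special case of $\mathcal{P}(\omega)$ --- the quasi-metric analogue of the classical fact that completely metrizable subspaces of Polish spaces are $G_\delta$ --- which is the genuinely hard completeness argument of \cite{debrecht6} and should not be left as ``bookkeeping''.
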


\begin{theorem}[{\name{Heckmann} \cite{heckmann}, \name{Becher} \& \name{Grigorieff} \cite[Theorem 3.14]{becher}}]
\label{theo:bairecategory}
Let $\mathbf{X}$ be quasi-Polish. If $\mathbf{X} = \bigcup_{i \in \mathbb{N}} A_i$ with each $A_i$ being $\Sigma^0_2$, then there is some $i_0$ such that $A_{i_0}$ has non-empty interior.
\end{theorem}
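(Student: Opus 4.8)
The plan is to argue by contradiction, reducing the statement to the fact that every nonempty quasi-Polish space is a Baire space (a standard consequence of Smyth-completeness). So I would assume $\mathbf{X} \neq \emptyset$ and that \emph{every} $A_i$ has empty interior, and then exhibit $\mathbf{X}$ as a countable union of nowhere dense sets, contradicting the Baire property.

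The first step is to refine the cover into one by locally closed sets. By the definition of the Borel hierarchy on countably-based $T_0$ spaces, each $\Sigma^0_2$ set decomposes as $A_i = \bigcup_j (U_{i,j} \setminus V_{i,j})$ with the $U_{i,j}, V_{i,j}$ open, and each difference $U_{i,j} \setminus V_{i,j} = U_{i,j} \cap V_{i,j}^c$ is locally closed (an open set intersected with a closed set). This yields $\mathbf{X} = \bigcup_{i,j} L_{i,j}$ with every $L_{i,j}$ locally closed. It is worth stressing that one cannot reduce to a cover by closed sets (i.e.\ to the $F_\sigma$ case), because in the non-metrizable quasi-Polish setting open sets need not be $F_\sigma$ and $\Sigma^0_2$ is strictly larger than $F_\sigma$; coping with genuine differences of open sets is exactly where this strengthens the classical Baire category theorem for closed covers.

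The key lemma is then: a locally closed set $L$ with empty interior is nowhere dense, i.e.\ $\operatorname{int}(\overline{L}) = \emptyset$. Writing $L = U \cap F$ with $U$ open and $F$ closed, and using $\overline{L} \subseteq F$, one checks $L = U \cap \overline{L}$, so $L$ is relatively open and dense in its closure $\overline{L}$. If $W := \operatorname{int}(\overline{L})$ were nonempty, then $W$ would be an open subset of $\mathbf{X}$ contained in $\overline{L}$, whence $W \cap U = W \cap \overline{L} \cap U = W \cap L$ is open in $\mathbf{X}$, nonempty (as the open set $W \subseteq \overline{L}$ must meet the dense subset $L$), and contained in $L$; this contradicts $\operatorname{int}(L) = \emptyset$.

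Finally I would assemble the pieces: since $L_{i,j} \subseteq A_i$, monotonicity of the interior gives $\operatorname{int}(L_{i,j}) \subseteq \operatorname{int}(A_i) = \emptyset$, so by the key lemma every $L_{i,j}$ is nowhere dense, and $\mathbf{X} = \bigcup_{i,j} L_{i,j}$ displays $\mathbf{X}$ as meager in itself, contradicting that it is a nonempty Baire space. I expect the main obstacle to lie precisely in the non-metrizable setting: confirming that the decomposition of $\Sigma^0_2$ into locally closed pieces is the appropriate one and that the key lemma survives the absence of Hausdorffness. The Baire property of quasi-Polish spaces is the essential external input the argument rests on.
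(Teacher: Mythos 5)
Your argument is correct. Note first that the paper does not prove this theorem at all --- it imports it from Heckmann and from Becher--Grigorieff --- so there is no in-paper proof to compare against; your proposal has to stand on its own, and it does. The decomposition $A_i = \bigcup_j (U_{i,j} \setminus V_{i,j})$ is exactly the definition of $\Sigma^0_2$ in the non-metrizable Borel hierarchy used throughout the quasi-Polish literature, and your key lemma (a locally closed set with empty interior is nowhere dense) is proved cleanly: from $\overline{L} \subseteq F$ you get $L = U \cap \overline{L}$, and the rest uses nothing beyond monotonicity of closure and interior, so no separation axioms are needed. Your insistence that one cannot reduce to an $F_\sigma$ cover is well placed: that is precisely the point of stating the theorem for $\Sigma^0_2$ rather than for countable closed covers. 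The only external input is that every quasi-Polish space is a Baire space; this is indeed a theorem of de Brecht (and can alternatively be seen from the fact, quoted later in the paper, that a quasi-Polish space is the image of Baire space under a continuous open surjection, since such images of Baire spaces are Baire). Two cosmetic points: the statement as literally given fails for $\mathbf{X} = \emptyset$, so your standing assumption $\mathbf{X} \neq \emptyset$ is the right reading rather than a restriction; and in the final step you should conclude from the Baire property that $\operatorname{int}\bigl(\bigcup_{i,j} L_{i,j}\bigr) = \emptyset$ while this union equals $\mathbf{X}$, which forces $\mathbf{X} = \emptyset$ --- the contradiction you want.
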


Recall that a closed set is called \emph{irreducible}, if it is not the union of two proper closed subsets. A topological space is called \emph{sober}, if each non-empty irreducible closed set is the closure of a singleton.

\begin{proposition}[\name{dB.} \cite{debrecht6}]
\label{prop:soberquasipolish}
A countably-based locally compact sober space is quasi-Polish. Conversely, each quasi-Polish space is sober.
\end{proposition}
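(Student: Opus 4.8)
The plan is to treat the two implications separately, beginning with the easier converse direction (that every quasi-Polish space is sober) and then handling the embedding-theoretic direction.

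For the converse, I would invoke the characterization that a quasi-Polish space is, up to homeomorphism, a $\Pi^0_2$-subspace $Y$ of the Scott domain $\mathcal{P}(\omega)$. Since $\mathcal{P}(\omega)$ is an algebraic domain, and every continuous dcpo is sober in its Scott topology, it suffices to show that sobriety is inherited by $\Pi^0_2$-subspaces. Let $C \subseteq Y$ be a nonempty irreducible closed subset and let $\overline{C}$ be its closure in $\mathcal{P}(\omega)$; this is again irreducible, so by sobriety of $\mathcal{P}(\omega)$ it has a unique generic point $p$, i.e.~$\overline{C} = \overline{\{p\}}$. The key step is to check $p \in Y$. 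Writing $Y = \bigcap_{n}(U_n \Rightarrow V_n)$ with $U_n, V_n$ open in $\mathcal{P}(\omega)$ (the normal form of a $\Pi^0_2$-set), I argue: if $p \in U_n$, then since $U_n$ is open and $p$ is generic for $\overline{C}$, the neighborhood $U_n$ meets the dense subset $C$, yielding some $c \in C \cap U_n$; as $c \in Y$ we get $c \in V_n$, and since $c \in \overline{\{p\}}$ lies below $p$ in the specialization order while $V_n$ is open (hence upward closed), we conclude $p \in V_n$. Thus $p \in Y$, and then $C = \overline{\{p\}} \cap Y$ is the closure of $\{p\}$ in $Y$, establishing sobriety.

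For the forward direction, I would show that a countably based, locally compact, sober space $\mathbf{X}$ embeds as a $\Pi^0_2$-subspace of $\mathcal{P}(\omega)$, whence quasi-Polishness follows from Proposition \ref{prop:quasisubspace} together with the embedding characterization. Fixing a countable basis $(B_n)_{n}$ of $\mathbf{X}$ and using that sobriety implies $T_0$, the map $e : \mathbf{X} \to \mathcal{P}(\omega)$ given by $e(x) = \{n : x \in B_n\}$ is an injective topological embedding. It then remains to characterize the image $e(\mathbf{X})$ and to confirm that the characterization has complexity $\Pi^0_2$. The guiding idea is that $A \in e(\mathbf{X})$ exactly when $A$ encodes a rounded, completely prime filter of basic opens that in addition converges to a genuine point. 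Local compactness enters through the way-below relation on the basis: it lets the convergence and roundedness conditions be phrased using way-below pairs $B_m \ll B_n$, which is precisely what keeps the defining conditions at level $\Pi^0_2$, whereas an unrestricted ``a limit exists'' clause would a priori be more complex. Sobriety is then used to guarantee that every such consistent filter is actually realized by a point, so that the $\Pi^0_2$-conditions cut out exactly $e(\mathbf{X})$.

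The main obstacle I anticipate is this last point-counting: verifying that the conditions characterizing $e(\mathbf{X})$ can genuinely be written at complexity $\Pi^0_2$ and are simultaneously necessary and sufficient. The necessity and the topological-embedding facts are routine; the delicate part is to exploit the interpolation property of $\ll$ (from local compactness) to replace a nested family of existential-over-universal conditions by a single countable intersection of $(U \Rightarrow V)$-clauses, and to invoke sobriety to produce the limit point. As a fallback for this direction I would keep in reserve the more analytic route of building a Smyth-complete quasi-metric directly from an enumeration of the way-below relation on the basis, verifying Smyth-completeness via the generic points supplied by sobriety; this trades the explicit complexity bookkeeping for an analysis of forward-Cauchy nets.
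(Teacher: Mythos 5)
The paper offers no proof of this proposition -- it is imported verbatim from de Brecht's quasi-Polish spaces paper -- so there is nothing in-text to compare against; I am judging your argument on its own. Your converse direction (quasi-Polish $\Rightarrow$ sober) is complete and correct: realizing the space as a $\Pi^0_2$-subspace $Y = \bigcap_n (U_n \Rightarrow V_n)$ of the sober space $\mathcal{P}(\omega)$, taking the generic point $p$ of the ambient closure $\overline{C}$ of an irreducible closed $C \subseteq Y$, and verifying $p \in Y$ clause by clause (density of $C$ in $\overline{C}$ gives a witness $c \in C \cap U_n$, and $c \in \overline{\{p\}}$ together with upward closure of $V_n$ under specialization pushes $c \in V_n$ up to $p \in V_n$) is airtight.

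The forward direction, by contrast, is a plan whose central step is announced rather than executed, and that step is essentially the whole theorem. What is missing is (a) the explicit $\Pi^0_2$ description of $e(\mathbf{X})$ and (b) the proof that its clauses are sufficient. For (a) you need three countable families of clauses, each of the form (clopen $\Rightarrow$ open) in $\mathcal{P}(\omega)$: saturation ($m \in A$ and $B_m \subseteq B_n$ imply $n \in A$), $\ll$-roundedness and filteredness ($n \in A$ implies some $m \in A$ with $B_m \ll B_n$, and any two members of $A$ have a common refinement in $A$), and \emph{finite} primeness ($n \in A$ and $B_n \subseteq B_{m_1} \cup \ldots \cup B_{m_k}$ imply some $m_i \in A$). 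For (b), which is where local compactness and sobriety actually earn their keep, one must show that any $A$ satisfying these clauses generates a \emph{completely} prime filter $F = \{U \mid \exists n \in A,\ B_n \subseteq U\}$: given a cover of $B_n$ with $n \in A$, roundedness supplies $m \in A$ and a compact $K$ with $B_m \subseteq K \subseteq B_n$, compactness of $K$ shrinks the cover to a finite basic one, and finite primeness then applies; sobriety converts $F$ into an actual point $x$ with $e(x) = A$. Your text correctly names all of these ingredients, but asserting that interpolation ``keeps the defining conditions at level $\Pi^0_2$'' is not the same as writing the conditions down and proving both inclusions; as submitted, the hard half of the proposition rests on an unverified claim. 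The fallback via a directly constructed Smyth-complete quasi-metric would face exactly the same unperformed verification in a different costume.
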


\subsection{Background on Noetherian spaces}

\begin{theorem}[\name{Goubault-Larrecq}  \cite{goubault}]
\label{theo:glnoethcharac}
The following are equivalent for a topological space $\mathbf{X}$:
\begin{enumerate}
\item $\mathbf{X}$ is Noetherian, i.e.~every strictly ascending chain of open sets is finite (Definition \ref{def:noetherian}).
\item Every strictly descending chain of closed sets is finite.
\item Every open set is compact.
\item Every subset is compact.
\end{enumerate}
\end{theorem}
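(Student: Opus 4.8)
The plan is to prove the cycle $(1) \Rightarrow (4) \Rightarrow (3) \Rightarrow (1)$ and to dispatch $(1) \Leftrightarrow (2)$ separately, since the latter is purely formal. For $(1) \Leftrightarrow (2)$ I would use complementation: $U \mapsto \mathbf{X} \setminus U$ is an order-reversing bijection between the open and the closed sets, so a strictly ascending chain of open sets corresponds exactly to a strictly descending chain of closed sets, and one is finite iff the other is.

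The implication $(4) \Rightarrow (3)$ is immediate, since an open set is in particular a subset. For $(3) \Rightarrow (1)$, I would take an ascending chain $U_0 \subseteq U_1 \subseteq \cdots$ of open sets and set $U = \bigcup_{n} U_n$. Then $U$ is open, hence compact by $(3)$; as $\{U_n\}_{n}$ is a cover of $U$ by open sets and the chain is directed, any finite subcover already yields $U = U_{n_0}$ for some $n_0$. Thus the chain stabilizes and cannot be both strictly ascending and infinite.

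The substantive direction is $(1) \Rightarrow (4)$. Here I would first record the standard fact that the ascending chain condition on the lattice of open sets is equivalent to every nonempty family of open sets possessing a maximal element. Given an arbitrary subset $A \subseteq \mathbf{X}$ and a cover $\{U_i\}_{i \in I}$ of $A$ by open sets, I would apply this to the family of all finite unions $\bigcup_{i \in F} U_i$ with $F \subseteq I$ finite. A maximal such union $U$ must already contain $A$: otherwise some point of $A$ would lie in a further $U_j$, and adjoining $j$ would produce a strictly larger finite union, contradicting maximality. This exhibits a finite subcover, so $A$ is compact (in its subspace topology, using the routine characterization of subspace compactness via covers drawn from $\mathcal{O}(\mathbf{X})$).

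The main obstacle is the passage from the chain condition to the existence of maximal elements used in $(1) \Rightarrow (4)$, which relies on a choice principle. I would either invoke the maximal-element formulation of the ascending chain condition directly, or, to keep the argument self-contained, build the finite subcover by a direct contradiction: assuming that no finite subfamily covers $A$, recursively choosing points of $A$ outside the current finite union together with indices covering them yields a strictly ascending infinite chain of finite unions, contradicting $(1)$.
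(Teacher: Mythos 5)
Your proof is correct, but there is nothing in the paper to compare it against: Theorem \ref{theo:glnoethcharac} is quoted from Goubault-Larrecq with a citation and the paper supplies no proof of its own. Your decomposition --- complementation for $(1)\Leftrightarrow(2)$, the trivial $(4)\Rightarrow(3)$, the ``union of the chain is compact, hence equals some $U_{n_0}$'' argument for $(3)\Rightarrow(1)$, and the maximal-finite-subunion argument for $(1)\Rightarrow(4)$ --- is the standard one and each step goes through. You also correctly isolate the only delicate point, namely that converting the ascending chain condition into the existence of maximal elements (or, equivalently, recursively building a bad chain from a cover with no finite subcover) uses a form of dependent choice; either formulation you propose is fine. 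The only detail worth writing out in $(3)\Rightarrow(1)$ is that a finite subcover of the directed family $\{U_n\}_{n\in\mathbb{N}}$ has union $U_{n_0}$ for $n_0$ the largest index occurring, so $U_{n_0+1}\subseteq U_{n_0}$ contradicts strictness; your phrasing already captures this.
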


As being Noetherian is preserved by sobrification\footnote{Sobrification only adds points, not open sets, and being Noetherian is only about open sets.}, we do not lose much by restricting our attention to sober Noetherian spaces. These admit a useful characterization as the upper topologies for certain well-founded partial orders. In the following we use the notation $\downarrow x := \{y \in X \mid y \prec x\}$.

\begin{theorem}[\name{Goubault-Larrecq}  \cite{goubault}]
\label{theo:glsobernoethcharac}
The following are equivalent for a topological space $\mathbf{X} = (X, \mathcal{T})$:
\begin{enumerate}
\item $\mathbf{X}$ is a sober Noetherian space.
\item There is some well-founded partial order $\prec$ on $X$ such that $\mathcal{T}$ is the upper topology induced by $\prec$ and for any finite $F \subseteq \mathbf{X}$ there is a finite $G \subseteq \mathbf{X}$ such that: \[\bigcap_{x \in F} \downarrow x = \bigcup_{y \in G} \downarrow y\]
\end{enumerate}
\end{theorem}

\begin{lemma}[\name{Goubault-Larrecq}  \cite{goubault}]
\label{lem:closedfinitelygenerated}
Every closed subset of a sober Noetherian space is the closure of a finite set.
\end{lemma}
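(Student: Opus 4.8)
The plan is to prove this by the classical two-step route: first decompose an arbitrary closed set into finitely many irreducible closed sets using the Noetherian property, and then use sobriety to replace each irreducible piece by the closure of a single point.

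First I would show that every closed subset of a Noetherian space is a finite union of irreducible closed sets, arguing by well-founded induction on closed sets. By Theorem \ref{theo:glnoethcharac}(2) the closed sets satisfy the descending chain condition, so every nonempty family of closed sets has an inclusion-minimal member. Suppose for contradiction that the family $\mathcal{F}$ of closed sets which are \emph{not} finite unions of irreducible closed sets were nonempty, and pick a minimal $C \in \mathcal{F}$. Then $C$ must be nonempty (the empty set is the empty union of irreducibles) and reducible (if $C$ were irreducible it would be its own one-term decomposition), so $C = C_1 \cup C_2$ with $C_1, C_2$ proper closed subsets of $C$. Minimality of $C$ forces $C_1, C_2 \notin \mathcal{F}$, so each decomposes as a finite union of irreducible closed sets, and hence so does $C$ — contradicting $C \in \mathcal{F}$. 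Therefore $\mathcal{F} = \emptyset$.

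Second, I would invoke sobriety directly: by the definition recalled immediately before the statement, in a sober space every nonempty irreducible closed set is the closure $\overline{\{x\}}$ of a singleton. Combining the two steps, given a closed set $C$ I write $C = \bigcup_{i=1}^n C_i$ with each $C_i$ irreducible, choose $x_i$ with $C_i = \overline{\{x_i\}}$, and conclude $C = \bigcup_{i=1}^n \overline{\{x_i\}} = \overline{\{x_1, \dots, x_n\}}$, the closure of a finite set (the empty closed set being the closure of the empty finite set).

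I expect the main obstacle to be the first step, namely setting up the well-founded (Noetherian) induction cleanly — in particular extracting a minimal element of $\mathcal{F}$ from the descending chain condition and handling the base cases (empty and irreducible sets). Once the finite irreducible decomposition is established, sobriety and the elementary identity $\overline{A} \cup \overline{B} = \overline{A \cup B}$ make the remainder immediate.
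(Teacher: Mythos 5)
Your proof is correct. Be aware, though, that the paper does not prove Lemma \ref{lem:closedfinitelygenerated} at all: it is quoted from Goubault-Larrecq \cite{goubault} without proof. What the paper does prove is the special case for quasi-Polish Noetherian spaces (Corollary \ref{corr:closedfinitelygenerated}), and by a genuinely different route: given a closed set $A$ it forms the $\Delta^0_2$-cover of $\mathbf{X}$ by $A^C$ together with the point closures $\textrm{cl}\{x\}$ for $x \in A$, and extracts a finite subcover via Lemma \ref{lemma:delta02coveringfinite}, which rests on the Baire category theorem for quasi-Polish spaces. Your argument --- Noetherian induction (via the descending chain condition of Theorem \ref{theo:glnoethcharac}) to decompose a closed set into finitely many irreducible closed components, then sobriety to realize each nonempty component as the closure of a point --- is the classical one and is strictly more general: it uses no countability, no completeness, and no Baire category, so it establishes the lemma in the stated generality of arbitrary sober Noetherian spaces, whereas the paper's cover-based argument only reaches the (necessarily countable, by Theorem \ref{theo:characnoethquasi}) quasi-Polish case. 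The trade-off is that the paper's route exhibits the result as an instance of its central $\Delta^0_2$-compactness theme. Your handling of the base cases (the empty set as the empty union of irreducibles, an irreducible set as its own one-term decomposition) is fine; the only step worth flagging is that passing from the descending chain condition to the existence of an inclusion-minimal element of $\mathcal{F}$ uses dependent choice, which is standard and unobjectionable in this context.
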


\subsection{Some new observations}

\begin{theorem}
\label{theo:characnoethquasi}
The following are equivalent for a sober Noetherian space $\mathbf{X}$:
\begin{enumerate}
\item $\mathbf{X}$ is countable.
\item $\mathbf{X}$ is countably-based.
\item $\mathbf{X}$ is quasi-Polish.
\end{enumerate}
\begin{proof}
\begin{description}
\item[$1. \Rightarrow 2.$] By Theorem \ref{theo:glsobernoethcharac}, we can consider $\mathbf{X}$ to be equipped with the upper topology for some partial order. If $\mathbf{X}$ is countable, then any upper topology is countable, too.
\item[$2. \Rightarrow 1.$] By Theorem \ref{theo:glnoethcharac}, every open subset is compact, hence a finite union of basic open sets. Thus, a countably-based Noetherian topology is countable. A sober space with a countable topology has only countably many points.
\item[$2. \Rightarrow 3.$] As a Noetherian space is compact, we know $\mathbf{X}$ to be a countably-based sober compact space. Proposition \ref{prop:soberquasipolish} then implies $\mathbf{X}$ to be quasi-Polish.
\item[$3. \Rightarrow 2.$] By definition.
\end{description}
\end{proof}
\end{theorem}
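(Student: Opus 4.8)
The plan is to route everything through countable-basedness (item~2), proving the two equivalences $1 \Leftrightarrow 2$ and $2 \Leftrightarrow 3$. One direction is free: every quasi-Polish space is countably-based by definition, so $3 \Rightarrow 2$ needs no argument. The remaining three implications split into two counting arguments (for $1 \Leftrightarrow 2$) and one appeal to the characterisation of quasi-Polish spaces among sober spaces (for $2 \Rightarrow 3$).

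For the counting arguments I would exploit that, by Theorem~\ref{theo:glnoethcharac}, every open subset of a Noetherian space is compact. Under item~2, fix a countable basis; then every open set, being compact, is a \emph{finite} union of basic opens, so there are only countably many open sets and hence (taking complements) only countably many closed sets. Passing to item~1 from here is where sobriety enters: in a sober (hence $T_0$) space the points are in bijection with the irreducible closed sets, of which there can be at most countably many, giving $2 \Rightarrow 1$. Conversely, for $1 \Rightarrow 2$ I would use Theorem~\ref{theo:glsobernoethcharac} to present $\mathbf{X}$ as the upper topology of a countable well-founded poset; the complements of the principal down-sets $\downarrow x$ then form a countable subbasis, whose finite intersections give a countable basis, so $\mathbf{X}$ is countably-based (in fact, by the same compactness argument, it has only countably many open sets).

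For $2 \Rightarrow 3$, the point is that a Noetherian space is not merely compact but locally compact: by Theorem~\ref{theo:glnoethcharac} the whole space and every basic open neighbourhood are compact, so every open neighbourhood is already a compact neighbourhood. Thus under item~2 we have a countably-based, locally compact, sober space, and Proposition~\ref{prop:soberquasipolish} delivers quasi-Polishness directly.

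I expect the main obstacle to be the step from ``countably many closed sets'' to ``countably many points'' in $2 \Rightarrow 1$: this is false without sobriety (a coarse topology may sit on an uncountable set), so the argument must genuinely invoke the bijection between points and irreducible closed sets that sobriety together with $T_0$ provides. A lesser point to check carefully is that ``every open set is a finite union of basic opens'' really does follow from compactness of open sets, i.e.~that Theorem~\ref{theo:glnoethcharac} is being applied in the correct form.
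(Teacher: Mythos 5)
Your proposal is correct and follows essentially the same route as the paper: both pass through countable-basedness, use Theorem~\ref{theo:glnoethcharac} (open sets are compact, hence finite unions of basic opens) for the counting in $2 \Rightarrow 1$, Theorem~\ref{theo:glsobernoethcharac} for $1 \Rightarrow 2$, and Proposition~\ref{prop:soberquasipolish} for $2 \Rightarrow 3$. If anything you are slightly more careful than the paper in $2 \Rightarrow 3$, where you note explicitly that Noetherian gives \emph{local} compactness (every open neighbourhood is itself compact), which is the hypothesis Proposition~\ref{prop:soberquasipolish} actually requires, whereas the paper only mentions compactness of the whole space.
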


\begin{corollary}
A subspace of a quasi-Polish Noetherian space is sober iff it is a $\Pi^0_2$-subspace.
\begin{proof}
Combine Theorem \ref{theo:characnoethquasi} with Proposition \ref{prop:quasisubspace}.
\end{proof}
\end{corollary}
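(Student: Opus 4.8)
The plan is to reduce both directions to the two cited results, using that being Noetherian is hereditary and that quasi-Polish spaces are both countably-based and sober. Throughout, let $\mathbf{X}$ be a quasi-Polish Noetherian space and let $A \subseteq \mathbf{X}$ be a subspace.

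For the easy direction, suppose $A$ is a $\Pi^0_2$-subspace. Proposition \ref{prop:quasisubspace} immediately gives that $A$ is quasi-Polish, and by Proposition \ref{prop:soberquasipolish} every quasi-Polish space is sober; hence $A$ is sober. Note that this direction does not even use that $\mathbf{X}$ is Noetherian.

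For the converse, suppose $A$ is sober. I would first observe that $A$ inherits the two relevant structural properties from $\mathbf{X}$. Since quasi-Polish spaces are countably-based and countable-basedness passes to subspaces, $A$ is countably-based. Moreover, $A$ is Noetherian: by the characterization in Theorem \ref{theo:glnoethcharac}, being Noetherian is equivalent to every subset being compact, and this is manifestly hereditary, since any subset of $A$ is a subset of $\mathbf{X}$ and compactness of a set is intrinsic to its own subspace topology. Thus $A$ is a sober, countably-based Noetherian space, so the implication $2 \Rightarrow 3$ of Theorem \ref{theo:characnoethquasi} applies and yields that $A$ is quasi-Polish. Finally, a quasi-Polish subspace of the quasi-Polish space $\mathbf{X}$ is $\Pi^0_2$ by Proposition \ref{prop:quasisubspace}, which completes this direction.

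The only point requiring genuine care --- the main obstacle, such as it is --- is verifying that being Noetherian is truly hereditary, so that the sober subspace $A$ is itself a \emph{sober Noetherian} space to which Theorem \ref{theo:characnoethquasi} can legitimately be applied. Once this is in hand, the statement is a direct assembly of the cited facts.
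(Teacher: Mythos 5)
Your proof is correct and is exactly the intended unpacking of the paper's one-line proof ("combine Theorem \ref{theo:characnoethquasi} with Proposition \ref{prop:quasisubspace}"): the $\Pi^0_2$ direction goes through Proposition \ref{prop:quasisubspace} and sobriety of quasi-Polish spaces, and the converse uses that countable-basedness and the Noetherian property (via ``every subset is compact'') are hereditary, so the sober subspace is quasi-Polish by Theorem \ref{theo:characnoethquasi} and hence $\Pi^0_2$ by Proposition \ref{prop:quasisubspace}. No substantive difference from the paper's argument.
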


The following theorem already showcases the link between being Noetherian and a $\Delta^0_2$-analogue to compactness. Its proof is split into Lemmata \ref{lemma:converingcoverse},\ref{lemma:delta02coveringfinite} and Observation \ref{obs:sigmadelta}.

\begin{theorem}
\label{theo:finitecover}
The following are equivalent for a quasi-Polish space $\mathbf{X}$:
\begin{enumerate}
\item $\mathbf{X}$ is Noetherian.
\item Every $\Delta^0_2$-cover of $\mathbf{X}$ has a finite subcover.
\item Every $\Sigma^0_2$-cover of $\mathbf{X}$ has a finite subcover.
\end{enumerate}
\end{theorem}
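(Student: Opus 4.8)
The plan is to close the cycle $2 \Rightarrow 1$, $1 \Rightarrow 2$, together with $2 \Leftrightarrow 3$; since every $\Delta^0_2$-set is $\Sigma^0_2$, the implication $3 \Rightarrow 2$ is immediate, so the content of $2 \Leftrightarrow 3$ lies in showing $2 \Rightarrow 3$. Throughout I would exploit that a Noetherian quasi-Polish space is sober (Proposition \ref{prop:soberquasipolish}) and countable (Theorem \ref{theo:characnoethquasi}), that Noetherianness is hereditary and equivalent to the descending chain condition on closed sets (Theorem \ref{theo:glnoethcharac}), and that closed subspaces are $\Pi^0_2$ and hence again quasi-Polish (Proposition \ref{prop:quasisubspace}).

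For $2 \Rightarrow 1$ I would argue contrapositively. If $\mathbf{X}$ is not Noetherian, fix a strictly ascending chain of open sets $U_0 \subsetneq U_1 \subsetneq \cdots$ and put $U = \bigcup_n U_n$. Each $U_n$ is open and $X \setminus U$ is closed, so all of these are $\Delta^0_2$, and $\{U_n \mid n \in \mathbb{N}\} \cup \{X \setminus U\}$ is a $\Delta^0_2$-cover of $\mathbf{X}$. Any finite subfamily is contained in $U_m \cup (X \setminus U)$ for some $m$, which omits the nonempty set $U_{m+1} \setminus U_m$; hence there is no finite subcover, contradicting $2$.

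The implication $1 \Rightarrow 2$ is the heart of the argument, and the place I expect the real work. Assume $\mathbf{X}$ is Noetherian and let $\{A_i\}_{i \in I}$ be a $\Delta^0_2$-cover with, toward a contradiction, no finite subcover. Consider the family of nonempty closed sets $C$ for which $\{A_i \cap C\}_i$ admits no finite subcover of $C$; it contains $X$, so by the descending chain condition it has a minimal element $C_0$. As a closed subspace, $C_0$ is quasi-Polish, Noetherian and sober, hence countable by Theorem \ref{theo:characnoethquasi}, so I may pick one covering member through each of its countably many points and replace the cover of $C_0$ by a countable subfamily $\{A_{i_m}\}_m$ that still has no finite subcover. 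Each $A_{i_m} \cap C_0$ is $\Delta^0_2$, hence $\Sigma^0_2$, in $C_0$, so the Baire category theorem for quasi-Polish spaces (Theorem \ref{theo:bairecategory}) yields some $A_{i_{m_0}} \cap C_0$ with nonempty interior $W$ in $C_0$. Now $C_0 \setminus W$ is a proper closed subset of $C_0$; by minimality it is either empty or finitely subcovered, and in both cases adjoining $A_{i_{m_0}}$ (which contains $W$) produces a finite subcover of $C_0$ --- the desired contradiction.

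Finally, for $2 \Rightarrow 3$ I would use the standard decomposition of a $\Sigma^0_2$-set into a countable union of differences of open sets. Given a $\Sigma^0_2$-cover $\{A_i\}_i$, write each $A_i = \bigcup_n (U_{i,n} \setminus V_{i,n})$ with $U_{i,n}, V_{i,n}$ open; every $U_{i,n} \setminus V_{i,n}$ is the intersection of an open and a closed set and so is $\Delta^0_2$. The resulting family is a $\Delta^0_2$-cover refining $\{A_i\}_i$, so by $2$ it has a finite subcover, and replacing each chosen piece by the $A_i$ containing it gives a finite subcover of the original. The main obstacle is the $1 \Rightarrow 2$ step: the cover may be uncountable, so some reduction is needed before Baire category applies, and this is exactly what countability of Noetherian quasi-Polish spaces supplies; the minimality of $C_0$ is then what lets the single set of nonempty interior bootstrap to a genuine finite subcover.
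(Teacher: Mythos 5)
Your proposal is correct and follows essentially the same route as the paper: the same three-part decomposition (an ascending chain yields a $\Delta^0_2$-cover with no finite subcover for $2\Rightarrow 1$; countability of Noetherian quasi-Polish spaces plus the Baire category theorem for $1\Rightarrow 2$; refining a $\Sigma^0_2$-cover by differences of open sets for $2\Leftrightarrow 3$). The only difference is organizational: for $1\Rightarrow 2$ you run the Noetherian induction downward via a minimal bad closed set and a single application of Baire category, whereas the paper builds a strictly ascending chain of open sets by iterating Baire category on the closed complements and terminates by the ascending chain condition --- both are valid instances of the same Noetherian induction.
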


\begin{lemma}
\label{lemma:converingcoverse}
If a topological space $\mathbf{X}$ is not Noetherian, then it admits a countably-infinite $\Delta^0_2$-partition.
\begin{proof}
If $\mathbf{X}$ is not Noetherian, then there must be an infinite strictly ascending chain $(U_i)_{i \in \mathbb{N}}$ of open sets. Then $\{U_{i+1} \setminus U_i \mid i \in \mathbb{N}\} \cup \{U_0, \left (\bigcup_{i \in \mathbb{N}} U_i\right )^C\}$ constitutes a $\Delta^0_2$-partition with countably-infinitely many non-trivial pieces.
\end{proof}
\end{lemma}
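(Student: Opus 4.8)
The plan is to unfold the failure of the Noetherian property and then telescope the resulting chain into a partition. By Definition \ref{def:noetherian}, $\mathbf{X}$ fails to be Noetherian precisely when it carries an infinite \emph{strictly} ascending chain of open sets $U_0 \subsetneq U_1 \subsetneq U_2 \subsetneq \cdots$, so the first step is simply to fix such a chain. The guiding idea is that the successive differences of the $U_i$ isolate exactly the regions where the chain grows, and that these differences are $\Delta^0_2$ for cheap reasons.

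Concretely, I would set $A_0 := U_0$, let $A_{i+1} := U_{i+1} \setminus U_i$ for each $i \in \mathbb{N}$, and finally $A_\infty := \left(\bigcup_{i \in \mathbb{N}} U_i\right)^C$. Before checking these tile the space, I would record the classification uniformly: open sets and closed sets are $\Delta^0_2$ in any topological space, and a difference $U_{i+1} \setminus U_i = U_{i+1} \cap U_i^C$ is an intersection of an open set with a closed set, hence again $\Delta^0_2$, since $\Delta^0_2$ is self-dual and closed under finite intersection. This covers the base piece $A_0$, the tail piece $A_\infty$, and every difference piece at once.

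The verification that $\{A_0\} \cup \{A_{i+1}\}_{i \in \mathbb{N}} \cup \{A_\infty\}$ is a partition is then a short set-theoretic check. Pairwise disjointness of the differences holds because $A_{i+1} \subseteq U_{i+1}$ while $A_{j+1} \subseteq U_j^C \subseteq U_{i+1}^C$ whenever $j > i$; the base $A_0 = U_0$ is disjoint from each $A_{i+1} \subseteq U_i^C \subseteq U_0^C$; and $A_\infty$ is disjoint from everything contained in $\bigcup_i U_i$. Coverage is a telescoping identity, $A_0 \cup \bigcup_{i} A_{i+1} = \bigcup_i U_i$, together with its complement $A_\infty$. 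The one point that actually carries the weight of the statement is that infinitely many pieces are non-trivial, and this is exactly where \emph{strictness} of the chain enters: $U_{i+1} \setminus U_i \neq \emptyset$ for every $i$ precisely because $U_i \subsetneq U_{i+1}$. Hence the partition has countably-infinitely many non-empty $\Delta^0_2$ members, as required.

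I do not expect a genuine obstacle here; the proof is essentially bookkeeping. The only subtlety worth attention is ensuring the $\Delta^0_2$ classification of the difference pieces is justified directly from the definition of the Borel-style hierarchy on arbitrary topological spaces, rather than smuggling in metrizability, and keeping the disjointness argument honest when the two chain indices differ.
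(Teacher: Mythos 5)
Your proof is correct and takes essentially the same route as the paper: the same decomposition into $U_0$, the successive differences $U_{i+1}\setminus U_i$, and the complement of $\bigcup_i U_i$, with strictness of the chain guaranteeing infinitely many non-empty pieces. The extra bookkeeping you supply (disjointness, coverage, the open-intersect-closed classification) is exactly what the paper leaves implicit.
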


\begin{lemma}
\label{lemma:delta02coveringfinite}
Any $\Delta^0_2$-cover of a Noetherian quasi-Polish space has a finite subcover.
\begin{proof}
Since $\mathbf{X}$ is countable we can assume the covering is
countable. By the Baire category theorem for quasi-Polish spaces (Theorem \ref{theo:bairecategory}), there is a $\Delta^0_2$-set $A_0$
in the covering such that its interior, $U_0$ is non-empty.

For $n \geq 0$, if $\mathbf{X} \neq U_n$, then we repeat the same argument with
respect to $\mathbf{X} \setminus U_n$ to get a $\Delta^0_2$-set $A_{n+1}$ in the covering with non-empty interior relative to $\mathbf{X} \setminus U_n$. Define $U_{n+1}$ to be the
union of $U_n$
and the relative interior of $A_{n+1}$. Then $U_{n+1}$ is an
open subset of $\mathbf{X}$ which strictly contains $U_n$.
Since $\mathbf{X}$ is Noetherian, eventually $\mathbf{X} = U_n$, and $A_0,\ldots,A_n$ will
yield a finite subcovering of $\mathbf{X}$.
\end{proof}
\end{lemma}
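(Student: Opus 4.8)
The plan is to mimic a Baire-category descent: peel off the interior of one cover member at a time, passing to the closed complement at each stage, and use Noetherianness to force the process to stop after finitely many steps. First I would reduce to a countable cover. Since $\mathbf{X}$ is Noetherian and quasi-Polish, it is in particular sober (Proposition \ref{prop:soberquasipolish}) and hence a sober Noetherian quasi-Polish space, which is countable by Theorem \ref{theo:characnoethquasi}. Choosing for each point one member of the cover containing it therefore yields a subcover indexed by the (countable) point set, so without loss of generality the cover is a countable family $(A_n)_{n \in \mathbb{N}}$ of $\Delta^0_2$-sets.

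Next I would set up the induction. Because every $\Delta^0_2$-set is $\Sigma^0_2$ and the $A_n$ cover $\mathbf{X}$, the Baire category theorem for quasi-Polish spaces (Theorem \ref{theo:bairecategory}) supplies an index $n_0$ with $A_{n_0}$ having non-empty interior $U_0$. For the inductive step I would assume an open set $U_k \subsetneq \mathbf{X}$ has been constructed as a union of (relative) interiors of finitely many cover members, and pass to the complement $C_k := \mathbf{X} \setminus U_k$. This is a closed, hence $\Pi^0_2$, subspace, so by Proposition \ref{prop:quasisubspace} it is again quasi-Polish; moreover the traces $A_n \cap C_k$ are $\Delta^0_2$ in $C_k$ and cover $C_k$. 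A fresh application of Theorem \ref{theo:bairecategory} inside $C_k$ then yields some $A_{n_{k+1}}$ whose relative interior $V$ in $C_k$ is non-empty, and I would set $U_{k+1} := U_k \cup V$.

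The two facts that make the induction run are that $U_{k+1}$ is genuinely open in $\mathbf{X}$ and that it strictly contains $U_k$. For openness, writing $V = W \cap C_k$ with $W$ open in $\mathbf{X}$ gives $U_{k+1} = U_k \cup (W \setminus U_k) = U_k \cup W$, which is open; strictness is immediate since $V \subseteq C_k$ is non-empty and disjoint from $U_k$. This produces a strictly ascending chain $U_0 \subsetneq U_1 \subsetneq \cdots$ of open subsets of $\mathbf{X}$. Invoking Noetherianness, the chain must be finite, so $U_N = \mathbf{X}$ for some $N$; since each stage enlarged the open set only by a relative interior contained in a single cover member $A_{n_k}$, we conclude $\mathbf{X} = U_N \subseteq A_{n_0} \cup \cdots \cup A_{n_N}$, a finite subcover.

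I expect the main obstacle to be the bookkeeping in the inductive step rather than any deep argument: one must check that restricting to the closed complement $C_k$ keeps the ambient space quasi-Polish and keeps the traces $\Delta^0_2$ (so that Theorem \ref{theo:bairecategory} legitimately reapplies inside $C_k$), and one must verify that the relative interior $V$ really enlarges the open set instead of being absorbed into $U_k$. Once these are in place, the termination is a routine consequence of the Noetherian property.
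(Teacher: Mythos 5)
Your proposal is correct and follows essentially the same route as the paper's own proof: reduce to a countable cover via countability of the space, apply the quasi-Polish Baire category theorem to peel off a relative interior at each stage, and let the Noetherian condition terminate the resulting strictly ascending chain of open sets. The extra bookkeeping you supply (that $C_k$ is $\Pi^0_2$ hence quasi-Polish, that the traces stay $\Delta^0_2$, and that $U_k \cup V = U_k \cup W$ is open) is exactly the detail the paper leaves implicit, and it checks out.
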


\begin{observation}
\label{obs:sigmadelta}
Any $\Sigma^0_2$-cover of a quasi-Polish space can be refined into a $\Delta^0_2$-cover, and any $\Delta^0_2$-cover is a $\Sigma^0_2$-cover.
\end{observation}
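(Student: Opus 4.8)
The plan is to dispatch the two claims separately, both directly from the definition of the $\Sigma^0_2$-hierarchy on quasi-Polish spaces. The second claim is immediate and I would simply note it: since $\Delta^0_2 = \Sigma^0_2 \cap \Pi^0_2$, every $\Delta^0_2$-set is in particular $\Sigma^0_2$, so any $\Delta^0_2$-cover is already a $\Sigma^0_2$-cover with no modification needed.

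For the first claim, I would recall the characterization of $\Sigma^0_2$-sets in the countably-based (in particular quasi-Polish) setting: a set is $\Sigma^0_2$ iff it can be written as a countable union $\bigcup_j (U_j \setminus V_j)$ of differences of open sets. The key observation I would then establish is that each such difference $U \setminus V = U \cap V^C$ is in fact $\Delta^0_2$. It is $\Sigma^0_2$ by the very form of a difference of opens; and its complement $(U \setminus V)^C = U^C \cup V$ is the union of a closed set and an open set. Since a closed set $C$ is $\Sigma^0_2$ (write $C = \mathbf{X} \setminus C^C$ as a difference of opens) and an open set is trivially $\Sigma^0_2$, this complement is a finite union of $\Sigma^0_2$-sets, hence $\Sigma^0_2$; that gives $U \setminus V \in \Pi^0_2$, and therefore $U \setminus V \in \Delta^0_2$.

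With this in hand, the refinement is immediate. Given a $\Sigma^0_2$-cover $\{A_i\}_{i \in I}$, I would expand each member as $A_i = \bigcup_j (U_{i,j} \setminus V_{i,j})$ and pass to the family $\{U_{i,j} \setminus V_{i,j}\}_{i,j}$ of all constituent differences. Each piece is $\Delta^0_2$ by the previous step and is contained in the corresponding $A_i$, so this family refines the original cover; and since $\bigcup_{i,j}(U_{i,j} \setminus V_{i,j}) = \bigcup_i A_i = \mathbf{X}$, it is still a cover. This is the desired $\Delta^0_2$-refinement.

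I do not expect any real obstacle here beyond correctly invoking the inductive definition of the $\Sigma^0_2$-hierarchy for quasi-Polish spaces; the only point that genuinely requires a line of verification is that differences of open sets lie in $\Pi^0_2$ as well as $\Sigma^0_2$, as this is precisely what upgrades the refinement from merely $\Sigma^0_2$ to $\Delta^0_2$.
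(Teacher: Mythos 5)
Your proof is correct and is exactly the argument the paper leaves implicit: the Observation is stated without proof, relying on the definition of $\Sigma^0_2$-sets in countably-based spaces as countable unions of differences of open sets, each such difference being $\Delta^0_2$ since its complement $U^C \cup V$ is again a union of differences of opens. Your verification that differences of opens lie in $\Pi^0_2$ is the one genuinely load-bearing step, and you have it right.
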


\begin{corollary}
Let $\mathbf{X}$ be a Noetherian quasi-Polish space, and let $\mathbf{X}^\delta$ be the topology induced by the $\Delta^0_2$-subsets of $\mathbf{X}$. Then $\mathbf{X}^\delta$ is a compact Hausdorff space.
\begin{proof}
That $\mathbf{X}^\delta$ is compact follows from Lemma \ref{lemma:delta02coveringfinite}. To see that it is Hausdorff, we just note that in any $T_0$-space, two distinct points can be separated by a disjoint pair of an open and a closed set -- hence by $\Delta^0_2$-sets.
\end{proof}
\end{corollary}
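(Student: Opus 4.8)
The plan is to exploit that the $\Delta^0_2$-subsets of $\mathbf{X}$ form a Boolean algebra: they are closed under finite unions, finite intersections, and—crucially—under complementation. Consequently they constitute a base for the topology $\mathbf{X}^\delta$, and moreover every basic open set is simultaneously closed in $\mathbf{X}^\delta$ (its complement being $\Delta^0_2$ as well). Thus $\mathbf{X}^\delta$ is zero-dimensional, and I would handle compactness and the Hausdorff property separately, each reducing to a short argument once this self-duality is in hand.

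For compactness, I would first invoke the standard fact that it suffices to verify the finite-subcover condition against covers by basic open sets. Given an arbitrary $\mathbf{X}^\delta$-open cover $\{V_j\}_j$, writing each $V_j$ as a union of the $\Delta^0_2$-sets it contains yields a $\Delta^0_2$-cover of $\mathbf{X}$ refining it. Lemma \ref{lemma:delta02coveringfinite} supplies a finite subcover $A_1, \ldots, A_n$ of this refinement; tracing each chosen piece $A_i$ back to a cover member $V_{j_i} \supseteq A_i$ then produces a finite subcover $V_{j_1}, \ldots, V_{j_n}$ of the original. Hence $\mathbf{X}^\delta$ is compact, and I note that this is the only place the Noetherian hypothesis enters.

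For the Hausdorff property, I would use that a quasi-Polish space is sober, hence $T_0$, by Proposition \ref{prop:soberquasipolish}. Given distinct points $x \neq y$, the $T_0$ separation provides an open set $U$ containing exactly one of them, say $x \in U$ and $y \notin U$. Since every open set and every closed set lies in $\Delta^0_2$, both $U$ and its complement $\mathbf{X} \setminus U$ are $\Delta^0_2$, hence open in $\mathbf{X}^\delta$; they are disjoint and separate $x$ from $y$. This yields $T_2$, so $\mathbf{X}^\delta$ is compact Hausdorff.

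The only genuine subtlety—and the step I would treat most carefully—is the claim that open and closed sets lie in $\Delta^0_2$. This is immediate in the metrizable case, but in the (possibly non-Hausdorff) quasi-Polish setting one must appeal to the specific definition of the $\Sigma^0_2$-hierarchy, under which a closed set $\mathbf{X} \setminus U$ is $\Sigma^0_2$ as a single difference $\mathbf{X} \setminus U$ of open sets, while an open set is trivially both $\Sigma^0_2$ and $\Pi^0_2$, hence $\Delta^0_2$. Everything else is bookkeeping around the Boolean-algebra structure of the $\Delta^0_2$-sets, so I do not expect any essential difficulty beyond this point.
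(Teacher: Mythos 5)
Your proposal is correct and follows essentially the same route as the paper: compactness is reduced to $\Delta^0_2$-covers (basic covers for $\mathbf{X}^\delta$) and handled by Lemma \ref{lemma:delta02coveringfinite}, and the Hausdorff property comes from $T_0$-separating two points by an open set and its closed complement, both of which are $\Delta^0_2$ in the generalized Borel hierarchy. Your extra care about why open and closed sets are $\Delta^0_2$ in the non-metrizable setting is exactly the right point to check, and your resolution of it is correct.
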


Recall that a topological space satisfies the $T_D$-separation axiom (cf.~\cite{aull}) iff every singleton is a $\Delta^0_2$-set.

\begin{corollary}
A Noetherian quasi-Polish space is $T_D$ iff it is finite.
\begin{proof}
If $\mathbf{X}$ is a $T_D$ space, then $\mathbf{X} = \bigcup_{x \in \mathbf{X}} \{x\}$ is a $\Delta^0_2$ covering of it. By Lemma \ref{lemma:delta02coveringfinite}, it then follows that there is a finite subcovering, which can only be identical to the original covering -- hence, $\mathbf{X}$ is finite. For the converse direction, by Corollary \ref{corr:quasisingleton} every singleton in a quasi-Polish space is $\Pi^0_2$. In a finite space, it follows that they are even $\Delta^0_2$.
\end{proof}
\end{corollary}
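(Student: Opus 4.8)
The plan is to establish the two implications separately, using the $\Delta^0_2$-finite-subcover property (Lemma \ref{lemma:delta02coveringfinite}) for the forward direction and elementary pointclass arithmetic for the converse.

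For the forward direction I would assume $\mathbf{X}$ is $T_D$, so that by the characterization recalled just above every singleton $\{x\}$ is $\Delta^0_2$. Then the family $(\{x\})_{x \in \mathbf{X}}$ is a $\Delta^0_2$-cover of $\mathbf{X}$, and Lemma \ref{lemma:delta02coveringfinite} supplies a finite subcover. The crucial point is that this particular cover is minimal: each point belongs to exactly one of its members, so no member may be discarded. Consequently a finite subcover can only be the entire family, forcing $\mathbf{X}$ to have finitely many points.

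For the converse I would start from Corollary \ref{corr:quasisingleton}, which gives that in any quasi-Polish space every singleton is $\Pi^0_2$. To promote this to $\Delta^0_2$ it suffices to show each singleton is also $\Sigma^0_2$, i.e.\ that its complement is $\Pi^0_2$. Writing $\mathbf{X} \setminus \{x\} = \bigcup_{y \neq x} \{y\}$ and using finiteness of $\mathbf{X}$, this is a finite union of $\Pi^0_2$-sets, hence itself $\Pi^0_2$ since that pointclass is closed under finite unions. Thus $\{x\}$ is $\Sigma^0_2$ as well, so $\{x\} \in \Delta^0_2$ and $\mathbf{X}$ is $T_D$.

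The whole argument is really just bookkeeping with the hierarchy; the only two places demanding a moment's attention are the minimality of the singleton cover in the first half (so that ``finite subcover'' genuinely yields a finite space rather than merely a finite subfamily that happens to cover) and the closure of $\Pi^0_2$ under finite unions in the second. I do not anticipate any substantial obstacle.
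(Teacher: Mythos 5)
Your proposal is correct and follows essentially the same route as the paper's proof: the singleton cover plus Lemma \ref{lemma:delta02coveringfinite} for the forward direction, and Corollary \ref{corr:quasisingleton} upgraded to $\Delta^0_2$ via finiteness for the converse. The only difference is that you spell out the two small steps the paper leaves implicit (minimality of the disjoint singleton cover, and closure of $\Pi^0_2$ under finite unions), both of which are fine.
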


\begin{corollary}
An infinite Noetherian quasi-Polish space contains a $\Pi^0_2$-complete singleton.
\end{corollary}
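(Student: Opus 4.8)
The plan is to extract the required point from the non-$T_D$-ness of the space and then upgrade the purely negative information "not $\Delta^0_2$" into the positive statement "$\Pi^0_2$-complete". First I would argue that $\mathbf{X}$ fails the $T_D$-axiom: by the preceding corollary a Noetherian quasi-Polish space is $T_D$ iff it is finite, and $\mathbf{X}$ is infinite. By the recalled characterization of $T_D$ (every singleton is $\Delta^0_2$), failure of $T_D$ means there is at least one point $x$ with $\{x\}$ not $\Delta^0_2$. Fix such an $x$. By Corollary \ref{corr:quasisingleton}, $\{x\}$ is nonetheless $\Pi^0_2$. Since $\Delta^0_2 = \Sigma^0_2 \cap \Pi^0_2$, a set that is $\Pi^0_2$ but not $\Delta^0_2$ cannot be $\Sigma^0_2$; hence $\{x\}$ is a \emph{proper} $\Pi^0_2$-set, lying in $\Pi^0_2$ but not in its dual class $\Sigma^0_2$.

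It then remains to show that a proper $\Pi^0_2$-subset of a quasi-Polish space is automatically $\Pi^0_2$-complete, which is where the actual content lies. Here I would appeal to the (generalized) Wadge theory for quasi-Polish spaces: at the finite Borel levels the hierarchy is not merely proper but satisfies the "true implies complete" phenomenon, so that any set belonging to $\Pi^0_2$ and not to $\Sigma^0_2$ already reduces every $\Pi^0_2$-set via the appropriate (continuous, equivalently realizer-based) reductions. Granting this, $\{x\}$ is $\Pi^0_2$-complete and the corollary follows at once from the first paragraph. Morally, the non-$\Sigma^0_2$-ness of $\{x\}$ forces a strictly decreasing sequence of open neighbourhoods $U_0 \supsetneq U_1 \supsetneq \cdots$ with $\bigcap_n U_n = \{x\}$ that does not stabilize into a locally closed pattern, and these infinitely many genuine "levels" are exactly what is needed to realize a continuous reduction of a canonical $G_\delta$-complete ($\Pi^0_2$-complete) set into $\{x\}$, sending a putative member deeper and deeper into the $U_n$ toward $x$.

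The main obstacle is precisely this last transfer: making the completeness claim rigorous requires carrying Wadge's lemma (the "true implies complete" dichotomy for non-self-dual classes) from the Polish to the quasi-Polish setting, together with fixing the correct reduction notion on admissibly represented spaces. I would therefore invoke the results of \name{dB.} \cite{debrecht6} on the Borel hierarchy of quasi-Polish spaces rather than reprove them, and keep the self-contained part of the argument to the purely separation-theoretic observation of the first paragraph.
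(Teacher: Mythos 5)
The paper states this corollary without any proof, and your first paragraph is exactly the intended derivation from its placement in the text: by the preceding corollary an infinite Noetherian quasi-Polish space is not $T_D$, so some singleton fails to be $\Delta^0_2$, while by Corollary \ref{corr:quasisingleton} every singleton is $\Pi^0_2$, whence that singleton lies in $\Pi^0_2 \setminus \Sigma^0_2$. The completeness upgrade you defer to \cite{debrecht6} does close: a quasi-Polish space carries a total open admissible representation $\delta : \Baire \to \mathbf{X}$, admissible representations of countably based spaces preserve and reflect the level-$2$ Borel classes, so $\delta^{-1}(\{x\})$ is $\Pi^0_2 \setminus \Sigma^0_2$ in $\Baire$ and hence $\Pi^0_2$-complete there by classical Wadge theory, and composing any such reduction with $\delta$ gives the required continuous reductions into $\mathbf{X}$ -- so your argument is correct and essentially the intended one.
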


We can also obtain the following special case of \name{Goubault-Larrecq}'s Lemma \ref{lem:closedfinitelygenerated} as a corollary of Lemma \ref{lemma:delta02coveringfinite}:

\begin{corollary}
\label{corr:closedfinitelygenerated}
Every closed subset of a quasi-Polish Noetherian space is the closure of a finite set.
\begin{proof}
Given some closed subset $A \subseteq \mathbf{X}$, consider the $\Delta^0_2$-cover $\mathbf{X} = A^C \cup \bigcup_{x \in A} \textrm{cl} \{x\}$. By Lemma \ref{lemma:delta02coveringfinite} there is some finite subcover $\mathbf{X} = A^C \cup \bigcup_{x \in F} \textrm{cl} \{x\}$, but then it follows that $A = \textrm{cl} F$.
\end{proof}
\end{corollary}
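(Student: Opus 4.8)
The plan is to reduce the statement to the finite-subcover property of Lemma \ref{lemma:delta02coveringfinite} by choosing a $\Delta^0_2$-cover of $\mathbf{X}$ whose pieces are small enough that any finite subcover forces $A$ to be the closure of finitely many points.

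Given a closed set $A \subseteq \mathbf{X}$, I would take the cover consisting of the open set $A^C$ together with the singleton closures $\textrm{cl}\{x\}$ for all $x \in A$. This is a $\Delta^0_2$-family, since $A^C$ is open (hence $\Sigma^0_1 \subseteq \Delta^0_2$) and each $\textrm{cl}\{x\}$ is closed (hence $\Pi^0_1 \subseteq \Delta^0_2$); and it covers $\mathbf{X}$ because a point either avoids $A$, landing in $A^C$, or lies in $A$, landing in its own singleton closure. Lemma \ref{lemma:delta02coveringfinite} then supplies a finite subcover, which I may write as $\mathbf{X} = A^C \cup \bigcup_{x \in F} \textrm{cl}\{x\}$ for some finite $F \subseteq A$.

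It remains to extract $A = \textrm{cl}(F)$ from this equation. Intersecting with $A$ removes the $A^C$ term, and since each $x \in F$ lies in the closed set $A$ we have $\textrm{cl}\{x\} \subseteq A$, so the surviving union is contained in $A$ and in fact equals it; as closure distributes over finite unions, this union is $\textrm{cl}(F)$. I do not anticipate any real obstacle: all the work is carried by Lemma \ref{lemma:delta02coveringfinite}, and the only care needed is the bookkeeping observation that every singleton-closure piece appearing in the subcover is indexed by a point of $A$, so the finite index set $F$ automatically lies inside $A$.
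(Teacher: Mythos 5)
Your proposal is correct and follows exactly the same route as the paper's own proof: the cover $A^C \cup \bigcup_{x \in A} \textrm{cl}\{x\}$, Lemma \ref{lemma:delta02coveringfinite}, and the observation that the resulting finite subcover forces $A = \textrm{cl}(F)$. You merely spell out the final bookkeeping step (intersecting with $A$ and distributing closure over the finite union) that the paper leaves implicit.
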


Neither being sober nor being quasi-Polish is preserved by continuous images in general. However, being Noetherian is not only preserved itself, but in its presence, so are the other properties:

\begin{proposition}
Let $\mathbf{X}$ be a Noetherian sober (quasi-Polish) space and $\sigma : \mathbf{X} \to \mathbf{Y}$ a continuous surjection. Then $\mathbf{Y}$ is Noetherian sober (quasi-Polish) space, too.
\begin{proof}
 Let $C \subseteq \mathbf{Y}$ be irreducible closed. Then $\sigma^{-1}(C)$ is closed, so by Lemma \ref{lem:closedfinitelygenerated} (or Corollary \ref{corr:closedfinitelygenerated}) there is finite $F \subseteq \mathbf{X}$ such that $\textrm{cl}(F) = \sigma^{-1}(C)$. Continuity implies $\textrm{cl}(\sigma(F)) \supseteq \sigma(\textrm{cl}(F)) = C$, hence $\textrm{cl}(\sigma(F)) = C$. Since $\sigma(F)$ is finite and $C$ is irreducible, $C$ must be equal to the closure of some element of $\sigma(F)$. Therefore, $\mathbf{Y}$ is sober.

 By Theorem \ref{theo:characnoethquasi}, for Noetherian sober spaces being quasi-Polish is equivalent to being countable, which is clearly preserved by (continuous) surjections.
\end{proof}
\end{proposition}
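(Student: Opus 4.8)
The plan is to establish the three assertions separately, pulling structure back along $\sigma$ to $\mathbf{X}$, where I can use the strong properties of Noetherian sober spaces, and then pushing the conclusions forward again using surjectivity. Surjectivity enters precisely through the identities $\sigma(\sigma^{-1}(V)) = V$ and $\sigma(\sigma^{-1}(C)) = C$, which let me transfer information in both directions.

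First I would handle \emph{Noetherianity}, which I expect to be routine. Since $\sigma$ is continuous, $V \mapsto \sigma^{-1}(V)$ maps opens of $\mathbf{Y}$ to opens of $\mathbf{X}$, and by surjectivity this map reflects strict inclusions: if $V \subsetneq V'$ then $\sigma(\sigma^{-1}(V)) = V \neq V' = \sigma(\sigma^{-1}(V'))$ forces $\sigma^{-1}(V) \neq \sigma^{-1}(V')$. Hence any strictly ascending chain of opens in $\mathbf{Y}$ would pull back to a strictly ascending chain of opens in $\mathbf{X}$; as $\mathbf{X}$ is Noetherian, the pulled-back chain is finite, so the original chain is finite too.

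The hard part will be \emph{soberness}, which is not preserved by continuous images in general; the whole point is that Noetherianity repairs this. The key tool is that in a Noetherian sober space every closed set is the closure of a finite set (Lemma \ref{lem:closedfinitelygenerated}, or Corollary \ref{corr:closedfinitelygenerated} in the quasi-Polish case). Given an irreducible closed $C \subseteq \mathbf{Y}$, I would take its closed preimage $\sigma^{-1}(C) \subseteq \mathbf{X}$ and write it as $\textrm{cl}(F)$ for a finite $F$. Then $C = \sigma(\sigma^{-1}(C)) = \sigma(\textrm{cl}(F)) \subseteq \textrm{cl}(\sigma(F))$ by continuity, while $\sigma(F) \subseteq C$ together with $C$ closed gives the reverse inclusion, so $C = \textrm{cl}(\sigma(F))$ with $\sigma(F)$ finite. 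Rewriting $\textrm{cl}(\sigma(F)) = \bigcup_{z \in \sigma(F)} \textrm{cl}\{z\}$ as a finite union of closed sets and invoking irreducibility of $C$, one of the finitely many pieces must already coincide with $C$, i.e.\ $C = \textrm{cl}\{z\}$ for some $z$. Thus every irreducible closed set of $\mathbf{Y}$ is the closure of a point, so $\mathbf{Y}$ is sober. The genuine obstacle here is exactly this reduction of an arbitrary irreducible closed set to the closure of a singleton, which is unavailable without the finite-generation of closed sets.

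For the \emph{quasi-Polish} refinement I would avoid any direct topological argument and instead appeal to the characterization in Theorem \ref{theo:characnoethquasi}: for sober Noetherian spaces, being quasi-Polish is equivalent to being countable. Since $\mathbf{X}$ is then countable and $\mathbf{Y} = \sigma(\mathbf{X})$ is a surjective image, $\mathbf{Y}$ is countable; combining this with the soberness and Noetherianity already proved, Theorem \ref{theo:characnoethquasi} yields that $\mathbf{Y}$ is quasi-Polish, completing the proof.
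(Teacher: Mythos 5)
Your proposal is correct and follows essentially the same route as the paper: sobriety via the finite generation of closed sets in Noetherian sober spaces (Lemma \ref{lem:closedfinitelygenerated}/Corollary \ref{corr:closedfinitelygenerated}) applied to $\sigma^{-1}(C)$, and the quasi-Polish case via the countability characterization of Theorem \ref{theo:characnoethquasi}. The only difference is that you spell out the (standard) preservation of Noetherianity under continuous surjections and the reverse inclusion $\textrm{cl}(\sigma(F)) \subseteq C$, which the paper leaves implicit.
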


\section{Background}
\label{sec:background}
\subsubsection*{Computable analysis}
In the remainder of this article, we wish to explore the uniform or effective aspects of the theory of Noetherian Quasi-Polish spaces. The basic framework for this is provided by \emph{computable analysis} \cite{weihrauchd}. Here the core idea is to introduce notions of continuity and in particular continuity on a wide range of spaces by translating them from those on Baire space via the so-called representations. Our notation and presentation follows closely that of \cite{pauly-synthetic}, which in turn is heavily influenced by \name{Escard\'o}'s \emph{synthetic topology} \cite{escardo}, and by work by \name{Schr\"oder} \cite{schroder5}.

\begin{definition}
A represented space is a pair $\mathbf{X} = (X, \delta_\mathbf{X})$ where $X$ is a set and $\delta_\mathbf{X} : \subseteq \Baire \to X$ is a partial surjection. A function between represented spaces is a function between the underlying sets.
\end{definition}

\begin{definition}
For $f : \subseteq \mathbf{X} \to \mathbf{Y}$ and $F : \subseteq \Baire \to \Baire$, we call $F$ a realizer of $f$ (notation $F \vdash f$), iff $\delta_Y(F(p)) = f(\delta_X(p))$ for all $p \in \dom(f\delta_X)$, i.e.~if the following diagram commutes:
  $$\begin{CD}
\Baire @>F>> \Baire\\
@VV\delta_\mathbf{X}V @VV\delta_\mathbf{Y}V\\
\mathbf{X} @>f>> \mathbf{Y}
\end{CD}$$
A map between represented spaces is called computable (continuous), iff it has a computable (continuous) realizer.
\end{definition}

Two represented spaces of particular importance are the integers $\mathbb{N}$ and Sierpi\'nski space $\mathbb{S}$. The represented space $\mathbb{N}$ has as underlying set $\mathbb{N}$ and the representation $\delta_\mathbb{N} : \Baire \to \mathbb{N}$ defined by $\delta_\mathbb{N}(p) = p(0)$. The Sierpi\'nski space $\mathbb{S}$ has the underlying set $\{\top,\bot\}$ and the representation $\delta_\mathbb{S}$ with $\delta_\mathbb{S}(0^\omega) = \top$ and $\delta_\mathbb{S}(p) = \bot$ for $p \neq 0^\omega$.

Represented spaces have binary products, defined in the obvious way: The underlying set of $\mathbf{X} \times \mathbf{Y}$ is $X \times Y$, with the representation $\delta_{\mathbf{X} \times \mathbf{Y}}(\langle p, q\rangle) = (\delta_\mathbf{X}(p),\delta_\mathbf{Y}(q))$. Here $\langle \ , \ \rangle : \Baire \times \Baire \to \Baire$ is the pairing function defined via $\langle p, q\rangle(2n) = p(n)$ and $\langle p, q\rangle(2n+1) = q(n)$.

A central reason for why the category of represented space is such a convenient setting lies in the fact that it is cartesian closed: We have available a function space construction $\mathcal{C}(\cdot, \cdot)$, where the represented space $\mathcal{C}(\mathbf{X},\mathbf{Y})$ has as underlying set the continuous functions from $\mathbf{X}$ to $\mathbf{Y}$, represented in such a way that the evaluation map $(f, x) : \mathcal{C}(\mathbf{X},\mathbf{Y}) \times \mathbf{X} \to \mathbf{Y}$ becomes computable. This can be achieved, e.g., by letting $nq$ represent $f$, if the $n$-th Turing machine equipped with oracle $q$ computes a realizer of $f$. This also makes currying, uncurrying and composition all computable maps.

Having available to us the space $\mathbb{S}$ and the function space construction, we can introduce the spaces $\mathcal{O}(\mathbf{X})$ and $\mathcal{A}(\mathbf{X})$ of open and closed subsets respectively of a given represented space $\mathbf{X}$. For this, we identity an open subset $U$ of $\mathbf{X}$ with its (continuous) characteristic function $\chi_U : \mathbf{X} \to \mathbb{S}$, and a closed subset with the characteristic function of the complement. As countable join (or) and binary meet (and) on $\mathbb{S}$ are computable, we can conclude that open sets are uniformly closed under countable unions, binary intersections and preimages under continuous functions by merely using elementary arguments about function spaces. The space $\mathcal{A}(\mathbf{X})$ corresponds to the upper Fell topology \cite{fell} on the hyperspace of closed sets.

Note that neither negation $\mathalpha{\neg} : \mathbb{S} \to \mathbb{S}$ (i.e.~mapping $\top$ to $\bot$ and $\bot$ to $\top$) nor countable meet (and) $\bigwedge : \mathcal{C}(\mathbb{N},\mathbb{S}) \to \mathbb{S}$ (i.e.~mapping the constant sequence $(\top)_{n \in \mathbb{N}}$ to $\top$ and every other sequence to $\bot$) are continuous or computable operations. They will play the role of fundamental counterexamples in the following. Both operations are equivalent to the \emph{limited principle of omniscience} ($\lpo$) in the sense of Weihrauch reducibility \cite{weihrauchc}.

We need two further hyperspaces, which both will be introduced as subspaces of $\mathcal{O}(\mathcal{O}(\mathbf{X}))$. The space $\mathcal{K}(\mathbf{X})$ of saturated compact sets identifies $A \subseteq \mathbf{X}$ with $\{U \in \mathcal{O}(\mathbf{X}) \mid A \subseteq U\} \in \mathcal{O}(\mathcal{O}(\mathbf{X}))$. Recall that a set is saturated, iff it is equal to the intersection of all open sets containing it (this makes the identification work). The saturation of $A$ is denoted by $\sat{A} := \bigcap \{U \in \mathcal{O}(\mathbf{X}) \mid A \subseteq A\}$. Compactness of $A$ corresponds to $\{U \in \mathcal{O}(\mathbf{X}) \mid A \subseteq U\}$ being open itself. The dual notion to compactness is \emph{overtness}\footnote{This notion is much less known than compactness, as it is classically trivial. It is crucial in a uniform perspective, though. The term \emph{overt} was coined by \name{Taylor} \cite{taylor}, based on the observation that these sets share several closure properties with the open sets.}. We obtain the space $\mathcal{V}(\mathbf{X})$ of overt set by identifying a closed set $A$ with $\{U \in \mathcal{O}(\mathbf{X}) \mid A \cap U \neq \emptyset\} \in \mathcal{O}(\mathcal{O}(\mathbf{X}))$. The space $\mathcal{V}(\mathbf{X})$ corresponds to the lower Fell (equivalently, the lower Vietoris) topology.

Aligned with the definition of the compact and overt subsets of a space, we can also define when a space itself is compact respectively overt:

\begin{definition}
A represented space $\mathbf{X}$ is (computably) compact, iff $\textrm{isFull} : \mathcal{O}(\mathbf{X}) \to \mathbb{S}$ mapping $X$ to $\top$ and any other open set to $\bot$ is continuous (computable). Dually, it is (computably) overt, iff $\textrm{isNonEmpty} : \mathcal{O}(\mathbf{X}) \to \mathbb{S}$ mapping $\emptyset$ to $\bot$ and any non-empty open set to $\top$ is continuous (computable).
\end{definition}

The relevance of $\mathcal{K}(\mathbf{X})$ and $\mathcal{V}(\mathbf{X})$ is found in particular in the following characterizations, which show that compactness just makes universal quantification preserve open predicates, and dually, overtness makes existential quantification preserve open predicates. We shall see later that being Noetherian has the same role for $\Delta^0_2$-predicates.

\begin{proposition}[{\cite[Proposition 40]{pauly-synthetic}}]
\label{prop:exists}
The map $\exists : \mathcal{O}(\mathbf{X} \times \mathbf{Y}) \times \mathcal{V}(\mathbf{X}) \to \mathcal{O}(\mathbf{Y})$ defined by $\exists(R, A) = \{y \in Y \mid \exists x \in A \ (x, y) \in R\}$ is computable. Moreover, whenever $\exists : \mathcal{O}(\mathbf{X} \times \mathbf{Y}) \times \mathcal{S}(\mathbf{X}) \to \mathcal{O}(\mathbf{Y})$ is computable for some hyperspace $\mathcal{S}(\mathbf{X})$ and some space $\mathbf{Y}$ containing a computable element $y_0$, then $\overline{\phantom{A}} : \mathcal{S}(\mathbf{X}) \to \mathcal{V}(\mathbf{X})$ is computable.
\end{proposition}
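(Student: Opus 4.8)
The plan is to treat the two assertions separately, reducing each to the computability of the membership and intersection tests that define the hyperspaces involved. For the first part, I would observe that a point $y \in \mathbf{Y}$ lies in $\exists(R,A)$ precisely when the slice $R_y := \{x \in X \mid (x,y) \in R\}$ meets $A$. Since computing an element of $\mathcal{O}(\mathbf{Y})$ amounts to computing its characteristic function $\mathbf{Y} \to \mathbb{S}$, it suffices to show that the test $(R, A, y) \mapsto [\,R_y \cap A \neq \emptyset\,] : \mathcal{O}(\mathbf{X} \times \mathbf{Y}) \times \mathcal{V}(\mathbf{X}) \times \mathbf{Y} \to \mathbb{S}$ is computable, and then to recover $\exists$ by currying out the $\mathbf{Y}$-argument.

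This test factors through two computable maps. First, slicing $(R, y) \mapsto R_y : \mathcal{O}(\mathbf{X} \times \mathbf{Y}) \times \mathbf{Y} \to \mathcal{O}(\mathbf{X})$ is computable: identifying $R$ with $\chi_R \in \mathcal{C}(\mathbf{X} \times \mathbf{Y}, \mathbb{S})$ and currying out the $\mathbf{X}$-coordinate produces a computable map $\mathcal{O}(\mathbf{X} \times \mathbf{Y}) \to \mathcal{C}(\mathbf{Y}, \mathcal{O}(\mathbf{X}))$, after which computable evaluation at $y$ yields $R_y$; all of this uses only that currying, the swap of factors, and evaluation are computable in the cartesian closed category of represented spaces. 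Second, the intersection test $(A, U) \mapsto [\,A \cap U \neq \emptyset\,] : \mathcal{V}(\mathbf{X}) \times \mathcal{O}(\mathbf{X}) \to \mathbb{S}$ is computable by the very definition of $\mathcal{V}(\mathbf{X})$, since an overt set $A$ is represented by the open subset $\{U \in \mathcal{O}(\mathbf{X}) \mid A \cap U \neq \emptyset\}$ of $\mathcal{O}(\mathbf{X})$ and evaluation $\mathcal{O}(\mathcal{O}(\mathbf{X})) \times \mathcal{O}(\mathbf{X}) \to \mathbb{S}$ is computable. Composing the slice map with the intersection test applied to $(A, R_y)$ yields the desired membership test, and currying completes the first part.

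For the converse, I would exploit the computable $\exists$ over $\mathcal{S}(\mathbf{X})$ together with the computable point $y_0 \in \mathbf{Y}$ to reconstruct the intersection test, which is exactly what is needed in order to land in $\mathcal{V}(\mathbf{X})$. Given $A \in \mathcal{S}(\mathbf{X})$ and $U \in \mathcal{O}(\mathbf{X})$, set $R := U \times Y$, the preimage of $U$ under the projection $\mathbf{X} \times \mathbf{Y} \to \mathbf{X}$; this is an open subset of $\mathbf{X} \times \mathbf{Y}$ computable from $U$. By the defining formula, $\exists(R, A) = \{y \in Y \mid \exists x \in A \ x \in U\}$ equals $Y$ if $A \cap U \neq \emptyset$ and $\emptyset$ otherwise. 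Evaluating this open set at $y_0$ (evaluation $\mathcal{O}(\mathbf{Y}) \times \mathbf{Y} \to \mathbb{S}$ being computable) therefore returns $\top$ iff $A \cap U \neq \emptyset$. Thus $(A, U) \mapsto [\,A \cap U \neq \emptyset\,] : \mathcal{S}(\mathbf{X}) \times \mathcal{O}(\mathbf{X}) \to \mathbb{S}$ is computable, and currying produces a computable map $\mathcal{S}(\mathbf{X}) \to \mathcal{O}(\mathcal{O}(\mathbf{X}))$ sending $A$ to $\{U \mid A \cap U \neq \emptyset\}$, which is precisely the closure map into $\mathcal{V}(\mathbf{X})$.

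I expect the remaining difficulties to be bookkeeping ones rather than conceptual obstacles. In the first part the delicate point is uniformity: the slice map must be computable jointly in all three arguments so that the final currying is legitimate, which rests entirely on the computability of currying, uncurrying, and evaluation recalled above. In the second part the point to verify is that the element of $\mathcal{O}(\mathcal{O}(\mathbf{X}))$ obtained by currying the reconstructed test is genuinely of the special form $\{U \mid A \cap U \neq \emptyset\}$ defining $\mathcal{V}(\mathbf{X})$, so that the map factors through $\mathcal{V}(\mathbf{X})$ and not merely through $\mathcal{O}(\mathcal{O}(\mathbf{X}))$; this is immediate from the computation above, but it is where the claim that the target is $\mathcal{V}(\mathbf{X})$ actually gets used.
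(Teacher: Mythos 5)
Your proof is correct, and it follows essentially the standard argument: the first part by currying the slice map and composing with the intersection test that defines $\mathcal{V}(\mathbf{X})$ as a subspace of $\mathcal{O}(\mathcal{O}(\mathbf{X}))$, the second by instantiating $R = U \times Y$ and evaluating at $y_0$. Note that the paper itself only cites this result from \cite[Proposition 40]{pauly-synthetic} without reproducing a proof, and your argument matches the one given there.
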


\begin{proposition}[{\cite[Proposition 42]{pauly-synthetic}}]
\label{prop:forall}
The map $\forall : \mathcal{O}(\mathbf{X} \times \mathbf{Y}) \times \mathcal{K}(\mathbf{X}) \to \mathcal{O}(\mathbf{Y})$ defined by $\forall(R, A) = \{y \in Y \mid \forall x \in A \ (x, y) \in R\}$ is computable. Moreover, whenever $\forall : \mathcal{O}(\mathbf{X} \times \mathbf{Y}) \times \mathcal{S}(\mathbf{X}) \to \mathcal{O}(\mathbf{Y})$ is computable for some hyperspace $\mathcal{S}(\mathbf{X})$ and some space $\mathbf{Y}$ containing a computable element $y_0$, then $\sat{\id} : \mathcal{S}(\mathbf{X}) \to \mathcal{K}(\mathbf{X})$ is computable.
\end{proposition}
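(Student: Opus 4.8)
The plan is to prove both halves by unfolding the representations of $\mathcal{K}(\mathbf{X})$ and $\mathcal{O}(\cdot)$ and exploiting the cartesian closed structure recalled above; in particular, currying, evaluation, and composition are all computable, and an open set is the same datum as its characteristic function into $\mathbb{S}$, so $\mathcal{O}(\mathcal{O}(\mathbf{X})) = \mathcal{C}(\mathcal{O}(\mathbf{X}), \mathbb{S})$.

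For the positive part, first I would curry the open relation. Given $R \in \mathcal{O}(\mathbf{X} \times \mathbf{Y})$, cartesian closedness yields a computable map $\hat{R} : \mathbf{Y} \to \mathcal{O}(\mathbf{X})$ with $\hat{R}(y) = R_y := \{x \in X \mid (x,y) \in R\}$, and the passage $R \mapsto \hat{R}$ is itself computable. Next I would observe that, by the very definition of $\mathcal{K}(\mathbf{X})$ as a subspace of $\mathcal{O}(\mathcal{O}(\mathbf{X}))$, a compact set $A$ is given precisely by a name for the open set $N_A := \{U \in \mathcal{O}(\mathbf{X}) \mid A \subseteq U\}$, viewed as a continuous map $N_A : \mathcal{O}(\mathbf{X}) \to \mathbb{S}$. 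The key identity is then $\forall(R,A) = N_A \circ \hat{R}$: indeed $y \in \forall(R,A)$ iff $A \subseteq R_y$ iff $N_A(\hat{R}(y)) = \top$. Since composition of continuous maps is a computable operation on function spaces, $\forall(R,A)$ is obtained as an open subset of $\mathbf{Y}$ computably from the $\mathcal{K}$-name $N_A$ of $A$ and the curried $\hat{R}$; this settles computability of $\forall$.

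For the \emph{moreover} direction, suppose $\forall : \mathcal{O}(\mathbf{X} \times \mathbf{Y}) \times \mathcal{S}(\mathbf{X}) \to \mathcal{O}(\mathbf{Y})$ is computable, with $y_0$ a computable point of $\mathbf{Y}$. To produce a $\mathcal{K}(\mathbf{X})$-name of $A$ from its $\mathcal{S}(\mathbf{X})$-name, it suffices to compute $N_A \in \mathcal{O}(\mathcal{O}(\mathbf{X}))$, i.e.~to semidecide $A \subseteq U$ uniformly in $U \in \mathcal{O}(\mathbf{X})$. The trick is to feed $\forall$ a cylinder over $U$: put $R_U := \pi_{\mathbf{X}}^{-1}(U) = U \times Y$, which is computable from $U$ as the preimage of $U$ under the computable projection $\pi_{\mathbf{X}} : \mathbf{X} \times \mathbf{Y} \to \mathbf{X}$. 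Then $\forall(R_U, A)$ equals all of $Y$ when $A \subseteq U$ and is $\emptyset$ otherwise, so $A \subseteq U$ iff $y_0 \in \forall(R_U, A)$. Since the evaluation map $\mathcal{O}(\mathbf{Y}) \times \mathbf{Y} \to \mathbb{S}$ is computable and $y_0$ is a computable point, the composite $U \mapsto \forall(R_U, A) \mapsto \big(y_0 \in \forall(R_U, A)\big)$ is a computable map $\mathcal{O}(\mathbf{X}) \to \mathbb{S}$, uniformly in the $\mathcal{S}$-name of $A$. Currying in $A$ then yields the desired computable map $\mathcal{S}(\mathbf{X}) \to \mathcal{O}(\mathcal{O}(\mathbf{X}))$ whose value at $A$ is exactly $N_A$; as this open-set datum is a genuine $\mathcal{K}(\mathbf{X})$-name of $\sat{A}$ (its openness even witnesses compactness of $\sat{A}$), we obtain computability of $\sat{\id}$.

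Routine bookkeeping aside, the one genuinely content-bearing step is the cylinder construction in the converse: recognizing that restricting the universal quantifier to relations of the form $U \times Y$ collapses $\forall(R,A)$ to the bare containment test $A \subseteq U$, which the computable point $y_0$ then lets us read off through evaluation. The positive direction is essentially a repackaging of the definition of $\mathcal{K}(\mathbf{X})$ via currying, so I expect no real obstacle there beyond invoking computability of the structural maps (currying, evaluation, composition, preimage) furnished by cartesian closedness.
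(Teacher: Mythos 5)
Your proof is correct, and it is essentially the argument behind the cited result: the paper itself only quotes \cite[Proposition 42]{pauly-synthetic} without reproving it, and the proof there proceeds exactly as you do — currying $R$ to $\hat{R}:\mathbf{Y}\to\mathcal{O}(\mathbf{X})$ and composing with the $\mathcal{K}$-name $N_A\in\mathcal{C}(\mathcal{O}(\mathbf{X}),\mathbb{S})$ for the positive half, and feeding the cylinder $U\times Y$ together with evaluation at $y_0$ to recover $N_{\sat{A}}$ for the converse. No gaps.
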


\subsubsection*{Connecting computable analysis and topology}
Calling the elements of $\mathcal{O}(\mathbf{X})$ the \emph{open sets} is justified by noting that they indeed form a topology, namely the final topology $X$ inherits from the subspace topology of $\dom(\delta_\mathbf{X})$ along $\delta_\mathbf{X}$. The notion of a continuous map between the represented spaces $\mathbf{X}$, $\mathbf{Y}$ however differs from that of a continuous map between the induced topological spaces. For a large class of spaces, the notions do coincide after all, as observed originally by \name{Schr\"oder} \cite{schroder}.

\begin{definition}
Call $\mathbf{X}$ admissible, if the map $x \mapsto \{U \in \mathcal{O}(\mathbf{X}) \mid x \in U\} : \mathbf{X} \to \mathcal{O}(\mathcal{O}(\mathbf{X}))$ admits a continuous partial inverse.
\end{definition}

\begin{theorem}[{\cite[Theorem 36]{pauly-synthetic}}]
A represented space $\mathbf{X}$ is admissible iff any map $f : \mathbf{Y} \to \mathbf{X}$ is continuous as a map between represented spaces iff it is continuous as a map between the induced topological spaces.
\end{theorem}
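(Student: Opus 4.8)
The statement is a biconditional between admissibility of $\mathbf{X}$ and the coincidence, for every represented space $\mathbf{Y}$, of representation-continuity and topological continuity for maps $f : \mathbf{Y} \to \mathbf{X}$. The plan is to dispose first of the trivial half of the coincidence and then treat the two directions separately. Representation-continuity always implies topological continuity: if $F$ realizes $f$ and $U \in \mathcal{O}(\mathbf{X})$, then $\delta_{\mathbf{Y}}^{-1}(f^{-1}(U)) = F^{-1}(\delta_{\mathbf{X}}^{-1}(U)) \cap \dom(\delta_{\mathbf{Y}})$ is open, so $f^{-1}(U)$ is open in the final topology; thus the content is the implication topological $\Rightarrow$ representation continuity. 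I record two facts about the unit $\eta_{\mathbf{X}} : x \mapsto \{U \in \mathcal{O}(\mathbf{X}) \mid x \in U\}$: it is computable, and it is a topological embedding onto its image. The latter holds because for each $U$ the evaluation set $\{\mathcal{F} \in \mathcal{O}(\mathcal{O}(\mathbf{X})) \mid U \in \mathcal{F}\}$ is open (evaluation at $U$ is computable) and pulls back along $\eta_{\mathbf{X}}$ to exactly $U$; so $\eta_{\mathbf{X}}$ maps each open $U$ to the relatively open set $\eta_{\mathbf{X}}(\mathbf{X}) \cap \{\mathcal{F} \mid U \in \mathcal{F}\}$ and is continuous, hence an embedding.

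For the easier direction, assume the coincidence of the two continuity notions. First I would note it forces $\mathbf{X}$ to be $T_0$: two topologically indistinguishable points carrying distinct names would yield a topologically continuous self-map swapping them that cannot be realized, since any realizer would have to separate their names, whereas no representation-continuous (hence topologically continuous) map into a discrete space can distinguish indistinguishable points. With $\mathbf{X}$ thus $T_0$, $\eta_{\mathbf{X}}$ is injective, so $\eta_{\mathbf{X}}^{-1}$ is a well-defined map from the subspace $\eta_{\mathbf{X}}(\mathbf{X}) \subseteq \mathcal{O}(\mathcal{O}(\mathbf{X}))$ to $\mathbf{X}$. Because $\eta_{\mathbf{X}}$ is a topological embedding, $\eta_{\mathbf{X}}^{-1}$ is topologically continuous, and it stays so for the possibly finer final topology carried by the subspace representation. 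Applying the hypothesis with $\mathbf{Y} = \eta_{\mathbf{X}}(\mathbf{X})$ and $f = \eta_{\mathbf{X}}^{-1}$ shows $\eta_{\mathbf{X}}^{-1}$ to be representation-continuous, i.e.\ a continuous partial inverse of $\eta_{\mathbf{X}}$, so $\mathbf{X}$ is admissible.

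For the harder direction, assume $\mathbf{X}$ admissible with continuous partial inverse $r$, and let $f : \mathbf{Y} \to \mathbf{X}$ be topologically continuous. The key reduction is $f = r \circ (\eta_{\mathbf{X}} \circ f)$, so it suffices to show $\eta_{\mathbf{X}} \circ f$ is representation-continuous; precomposing with $\delta_{\mathbf{Y}}$, this amounts to showing that for every topologically continuous $g : \subseteq \Baire \to \mathbf{X}$ the relation $R = \{(p,q) \mid g(p) \in \delta_{\mathcal{O}(\mathbf{X})}(q)\}$ is open in name space. I would prove this by a sequential argument: name spaces are metrizable, so openness of $R$ is equivalent to sequential closedness of its complement. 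Suppose $(p_n,q_n) \to (p_\infty,q_\infty)$ with $g(p_\infty) \in U_\infty$; I must produce $g(p_n) \in U_n$ eventually. Sequential continuity of $g$ gives $g(p_n) \to g(p_\infty)$ in $\mathbf{X}$, and $q_n \to q_\infty$ gives $U_n \to U_\infty$ in $\mathcal{O}(\mathbf{X})$. Since evaluation $\mathbf{X} \times \mathcal{O}(\mathbf{X}) \to \mathbb{S}$ is computable, and since in $\mathbb{S}$ convergence to $\top$ forces the value to be eventually $\top$, it remains only to lift $g(p_n) \to g(p_\infty)$ to a \emph{name}-convergent sequence; the continuous realizer of evaluation then transports the convergence and delivers $g(p_n) \in U_n$ for large $n$.

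The main obstacle is exactly this sequence lifting for $\mathbf{X}$: that $x_n \to x_\infty$ in $\mathbf{X}$ is witnessed by names $t_n \to t_\infty$. This is where admissibility is genuinely used, and I would extract it from the $\eta$-definition as follows. The iterated hyperspace $\mathcal{O}(\mathcal{O}(\mathbf{X}))$, being a function space built from the admissible $\mathbb{S}$, is itself admissibly represented and so enjoys sequence lifting. Given $x_n \to x_\infty$, continuity of $\eta_{\mathbf{X}}$ yields $\eta_{\mathbf{X}}(x_n) \to \eta_{\mathbf{X}}(x_\infty)$; I lift these to a name-convergent sequence in $\mathcal{O}(\mathcal{O}(\mathbf{X}))$ and apply the continuous realizer of $r$. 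As $r \circ \eta_{\mathbf{X}} = \id$, the outputs are names $t_n \to t_\infty$ with $\delta_{\mathbf{X}}(t_n) = x_n$ and $\delta_{\mathbf{X}}(t_\infty) = x_\infty$, which is the required lift. I expect this transfer of the sequence-lifting property through $r$, combined with the joint sequential continuity of evaluation used above, to be the genuine technical core of the proof — this is in essence Schr\"oder's theorem, and everything else is bookkeeping around the embedding $\eta_{\mathbf{X}}$.
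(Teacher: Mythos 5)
This statement is quoted background (Theorem 36 of the cited paper \emph{pauly-synthetic}, ultimately Schr\"oder's theorem); the paper under review gives no proof of it, so your attempt can only be judged on its own terms. Your overall architecture is the right one: the easy implication (realizer-continuity implies topological continuity) is correct, the ``coincidence implies admissible'' direction via applying the hypothesis to $\eta_{\mathbf{X}}^{-1}$ on the represented subspace $\eta_{\mathbf{X}}(\mathbf{X}) \subseteq \mathcal{O}(\mathcal{O}(\mathbf{X}))$ is sound (though your $T_0$ argument is only a sketch --- to rule out a realizer of the swap map you need an adversarial choice of $\mathbf{Y}$, e.g.\ re-representing the two indistinguishable points by a partition of $\Baire$ that is not a continuous preimage of their name sets), and the reduction of the hard direction to openness of the membership relation $R$ in name space, handled sequentially via metrizability of $\Baire$, is the standard route.

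The genuine gap is in the step you yourself flag as the ``technical core'': you derive sequence lifting for $\mathbf{X}$ from the claim that $\mathcal{O}(\mathcal{O}(\mathbf{X}))$, ``being admissibly represented, enjoys sequence lifting.'' But ``admissible implies lifting of topologically convergent sequences'' is not an independent fact you may cite here --- it is an instance of the very theorem you are proving. A topologically convergent sequence $x_n \to x_\infty$ is exactly a topologically continuous map from the represented space $\omega + 1$ (with its standard representation), and lifting it to converging names is exactly realizing that map; so invoking sequence lifting for the admissible space $\mathcal{O}(\mathcal{O}(\mathbf{X}))$ presupposes the hard direction of the theorem for that space. To close the argument you would have to establish the lifting (equivalently, factorization) property for the \emph{concrete} function-space representation of $\mathcal{C}(\mathcal{O}(\mathbf{X}),\mathbb{S})$ from scratch, and that is precisely the non-trivial content of Schr\"oder's theorem rather than ``bookkeeping around the embedding $\eta_{\mathbf{X}}$.'' As written, the proof assumes its conclusion one hyperspace level up.
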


The admissible represented spaces are themselves cartesian closed (in fact, it suffices for $\mathbf{Y}$ to be admissible in order to make $\mathcal{C}(\mathbf{X},\mathbf{Y})$ admissible). They can be seen as a joint subcategory of the sequential topological spaces and the represented spaces, and thus form the natural setting for computable topology. They have been characterized by \name{Schr\"oder} as the $\textrm{QCB}_0$-spaces \cite{schroder}, the $\mathrm{T}_0$ quotients of countably based spaces.

\name{Weihrauch} \cite{weihrauchd,weihrauchh} introduced the standard representation of a countably based space: Given some enumeration $(U_n)_{n \in \mathbb{N}}$ of a basis of a topological space $\mathbf{X}$, one can introduce the representation $\delta_\mathbf{B}$ where $\delta_\mathbf{B}(p) = x$ iff $\{n \in \mathbb{N} \mid \exists i \ p(i) = n+1\} = \{n \in \mathbb{N} \mid x \in U_n\}$. This yields an admissible representation, which in turn induces the original topology on $\mathbf{X}$.

Amongst the countably based spaces, the quasi-Polish spaces are distinguished by a completeness properties. We will make use of the following characterization:

\begin{theorem}[{\name{dB} \cite{debrecht6}}]
A topological space $\mathbf{X}$ is quasi-Polish, iff its topology is induced by an open admissible total representation $\delta_\mathbf{X} : \Baire \to \mathbf{X}$.
\end{theorem}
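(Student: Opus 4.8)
The plan is to prove both implications: for the forward direction I would manufacture the representation out of a Smyth-complete quasi-metric, and for the converse I would recover quasi-Polishness via the embedding into the Scott domain $\mathcal{P}(\omega)$ together with the $\Pi^0_2$-subspace characterizations already recorded in the excerpt.

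For the forward direction, fix a Smyth-complete quasi-metric $d$ inducing the topology of $\mathbf{X}$, together with a countable set $D=\{d_0,d_1,\dots\}$ whose balls form a basis. I would define $\delta_\mathbf{X}$ so that a name of a point is a code of a \emph{fast forward-Cauchy} sequence $(d_{k_n})_n$ in $D$, i.e.\ one with $d(d_{k_n},d_{k_m})\le 2^{-n}$ for $n\le m$, and set $\delta_\mathbf{X}(p)$ to be its canonical (Smyth) limit. The decisive point is \emph{totality}: I would read an arbitrary $p\in\Baire$ greedily, committing to a new basis point only when doing so preserves the fast-Cauchy condition and otherwise inserting a default admissible continuation. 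Density of $D$ guarantees that one never gets stuck, so every $p$ produces a genuine fast forward-Cauchy sequence, and Smyth completeness supplies a limit for each. This is exactly where the completeness hypothesis enters, and is the heart of the forward direction. Continuity, openness and admissibility should then be comparatively routine: a finite prefix of a name already confines the limit to a small ball (continuity), the set of points reachable through a fixed prefix is a union of balls (openness), and given any topologically continuous $f:\subseteq\Baire\to\mathbf{X}$ the sets $\{q : d(d_k,f(q))<2^{-n}\}$ are open, so from a prefix of $q$ one reads off approximating basis points, yielding a realizer and hence admissibility in the sense of Schr\"oder.

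For the converse, suppose $\delta_\mathbf{X}:\Baire\to\mathbf{X}$ is open, total and admissible. That $\mathbf{X}$ is countably based is immediate: since $\delta_\mathbf{X}$ is a continuous open surjection and $\Baire$ is countably based, the images of the cylinders form a countable basis, and admissibility forces $\mathbf{X}$ to be $T_0$. Hence, fixing such a basis $(U_n)$, the map $e:x\mapsto\{n : x\in U_n\}$ is a topological embedding of $\mathbf{X}$ into $\mathcal{P}(\omega)$, and by the characterization of quasi-Polish spaces as the $\Pi^0_2$-subspaces of $\mathcal{P}(\omega)$ (equivalently, via Proposition \ref{prop:quasisubspace}) it suffices to show that $e(\mathbf{X})$ is $\Pi^0_2$. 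The composite $q=e\circ\delta_\mathbf{X}:\Baire\to\mathcal{P}(\omega)$ is continuous, total, and open onto $e(\mathbf{X})$, so the goal is to upgrade the a priori merely analytic description $e(\mathbf{X})=q(\Baire)$ to a $\Pi^0_2$ one.

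I expect this last step to be the main obstacle: extracting from openness and totality of $q$ a countable conjunction of $\Sigma^0_2$-conditions characterizing membership of $A\in\mathcal{P}(\omega)$ in $e(\mathbf{X})$. The idea is that openness lets one certify, for each basic neighbourhood, the existence of a prefix whose $q$-image stays inside $A$, while totality guarantees that any coherent family of such certifications actually converges to a point of $\mathbf{X}$; assembling these over all basic neighbourhoods yields the required $\Pi^0_2$ form. An alternative route that sidesteps the explicit pointclass bookkeeping would be to transport the standard complete quasi-metric of $\Baire$ along $\delta_\mathbf{X}$, defining a quasi-metric $d'(x,y)$ in terms of how well the names of $x$ approximate $y$ and verifying Smyth completeness directly from totality and openness. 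Either way, converting the structural properties of the representation into a bona fide completeness statement is the crux of the converse.
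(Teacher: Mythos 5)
This theorem is quoted in the paper from \cite{debrecht6} without proof, so there is no in-paper argument to compare against; judged on its own terms, your outline has the right overall architecture and correctly locates the two hard points, but the forward direction contains a step that fails as written. The admissibility argument -- ``the sets $\{q : d(d_k,f(q))<2^{-n}\}$ are open, so from a prefix of $q$ one reads off approximating basis points, yielding a realizer'' -- does not produce a valid Cauchy name, for two reasons rooted in the asymmetry of $d$. First, the chain condition $d(d_{k_n},d_{k_m})\le 2^{-n}$ requires bounding $d(d_{k_n},d_{k_m})\le d(d_{k_n},f(q))+d(f(q),d_{k_m})$, and the second summand is the \emph{reverse} quasi-distance, which is not controlled by $f(q)\in B(d_{k_m},2^{-m})$. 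Second, even a fast forward-Cauchy sequence with $d(d_{k_n},f(q))\to 0$ need not have Smyth limit $f(q)$: in Sierpi\'nski space $\mathbb{S}$ with $d(\bot,\top)=0$ and $d(\top,\bot)=1$ the constant sequence $\bot,\bot,\ldots$ satisfies both conditions for the point $\top$ yet names $\bot$. The standard repair is to select a \emph{nested} chain of basic balls $x\in B(y_{n+1},2^{-(n+1)})\subseteq B(y_n,2^{-n})$, so that $y_m\in B(y_m,2^{-m})\subseteq B(y_n,2^{-n})$ yields $d(y_n,y_m)<2^{-n}$ for $n\le m$, and then to verify separately that Smyth limits of such chains are the points the balls shrink to; none of this is in your sketch, and it is also where one must check that the Cauchy representation induces the ball topology at all. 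The greedy totalization is likewise not free: it amounts to a continuous retraction of $\Baire$ onto the domain of the Cauchy representation, and you must verify that precomposing with it preserves openness and admissibility rather than assert it.

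For the converse you correctly reduce everything to showing $e(\mathbf{X})$ is $\Pi^0_2$ in $\mathcal{P}(\omega)$, but the paragraph offered is a statement of intent rather than a proof: no $\Pi^0_2$ description is actually extracted, and ``totality guarantees that any coherent family of certifications converges to a point'' is precisely the completeness assertion that has to be established. Since you yourself flag this as the crux, the converse is simply not done. For orientation: in \cite{debrecht6} both directions are routed through the $\Pi^0_2$-subspace characterization (Proposition \ref{prop:quasisubspace} here) and completeness-type characterizations of quasi-Polish spaces, rather than through direct manipulation of the quasi-metric -- the forward direction builds a total open admissible representation for an arbitrary $\Pi^0_2$ subspace of $\mathcal{P}(\omega)$ by an explicit name-repair construction, and the converse shows that a countably based $T_0$ space that is a continuous open image of $\Baire$ is quasi-Polish. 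If you want to keep your route, the nested-ball device fixes admissibility in the forward direction, but the $\Pi^0_2$ extraction in the converse still has to be carried out in full.
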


\subsubsection*{Synthetic descriptive set theory}
The central addition of synthetic descriptive set theory (as proposed by the authors in \cite{pauly-descriptive,pauly-descriptive-lics}) is the notion of a computable endofunctor:

\begin{definition}
An endofunctor $d$ on the category of represented spaces is called \emph{computable}, if for any represented spaces $\mathbf{X}$, $\mathbf{Y}$ the induced morphism $d : \mathcal{C}(\mathbf{X},\mathbf{Y}) \to \mathcal{C}(d\mathbf{X},d\mathbf{Y})$ is computable.
\end{definition}

To keep things simple, we will restrict our attention here to endofunctors that do not change the underlying set of a represented spaces, but may only modify the representation. Such endofunctors can in particular be derived from certain maps on Baire space, called \emph{jump operators} by \name{dB.}~in \cite{debrecht5}. Here, we instead adopt the terminology \emph{transparent map} introduced in \cite{gherardi4}. Further properties of transparent maps were studied in \cite{pauly-nobrega-arxiv}.

\begin{definition}
	Call $T : \subseteq \Baire \to \Baire$ \emph{transparent} iff for any computable (continuous) $g : \subseteq \Baire \to \Baire$ there is a computable (continuous) $f : \subseteq \Baire \to \Baire$ with $T \circ f = g \circ T$.
\end{definition}

If the relationship between $g$ and $f$ establishing $T$ to be transparent is uniform, then $T$ will induce a computable endofunctor $t$ by setting $t\mathbf{X}$ to be $(X,\delta_\mathbf{X} \circ T)$, and extending to functions in the obvious way.

By applying a suitable endofunctor to Sierpi\'nski space, we can define further classes of subsets; in particular those commonly studied in descriptive set theory. This idea and its relationship to universal sets is further explored in \cite{pauly-gregoriades}. Basically, we introduce the space $\mathcal{O}^d(\mathbf{X})$ of $d$-open subsets of $\mathbf{X}$ by identifying a subset $U$ with its continuous characteristic function $\chi_U : \mathbf{X} \to d\mathbb{S}$. If $d$ preserves countable products, it automatically follows that the $d$-open subsets are effectively closed under countable unions, binary intersections and preimages under continuous maps. The complements of the $d$-opens are the $d$-closed sets, denoted by $\mathcal{A}^d(\mathbb{S})$.

We will use the endofunctors to generate lifted versions of compactness and overtness:

\begin{definition}
A represented space $\mathbf{X}$ is (computably) $d$-compact, iff $\textrm{isFull} : \mathcal{O}^d(\mathbf{X}) \to d\mathbb{S}$ mapping $X$ to $\top$ and any other open set to $\bot$ is continuous (computable). Dually, it is (computably) $d$-overt, iff $\textrm{isNonEmpty} : \mathcal{O}^d(\mathbf{X}) \to d\mathbb{S}$ mapping $\emptyset$ to $\bot$ and any non-empty open set to $\top$ is continuous (computable).
\end{definition}

A fundamental example of a computable endofunctor linked to notions from descriptive set theory is the limit or jump endofunctor;

\begin{definition}
\label{def:jump}
Let $\lim : \subseteq \Baire \to \Baire$ be defined via $\lim(p)(n) = \lim_{i \to \infty} p(\langle n, i\rangle)$, where $\langle \ , \ \rangle : \mathbb{N} \times \mathbb{N} \to \mathbb{N}$ is a standard pairing function. Define the computable endofunctor $'$ by $(X,\delta_\mathbf{X})' = (X,\delta_\mathbf{X} \circ \lim)$ and the straight-forward lift to functions.
\end{definition}

The map $\lim$ and its relation to the Borel hierarchy and Weihrauch reducibility was studied by \name{Brattka} in \cite{brattka}. The jump of a represented spaces was studied in \cite{ziegler3,gherardi4}. The $'$-open sets are just the $\Sigma^0_2$-sets, and the further levels of the Borel hierarchy can be obtained by iterating the endofunctor.

\subsubsection*{Computability with finitely many mindchanges}
The most important endofunctor for our investigation of Noetherian Quasi-Polish spaces is the finite mindchange endofunctor $\nabla$:
\begin{definition}[\cite{pauly-descriptive}]
Define $\Delta : \subseteq \Baire \to \Baire$ via $\Delta(p)(n) = p(n + 1 + \max \{i \mid p(i) = 0\}) - 1$. Let the finite mindchange endofunctor be defined via $(X, \delta_X)^\nabla = (X, \delta_X \circ \Delta)$ and $(f : \mathbf{X} \to \mathbf{Y})^\nabla = f : \mathbf{X}^\nabla \to \mathbf{Y}^\nabla$.
\end{definition}

We find that $\nabla$ is a monad, and moreover, that $f : \mathbf{X} \to \mathbf{Y}^\nabla$ is computable (continuous) iff $f : \mathbf{X}^\nabla \to \mathbf{Y}^\nabla$ is. The computable maps from $\mathbf{X}$ to $\mathbf{Y}^\nabla$ can equivalently be understood as those maps from $\mathbf{X}$ to $\mathbf{Y}$ that are computable with finitely many mindchanges.

A machine model for computation with finitely many mindchanges is obtained by adding the option of resetting the output tape to the initial state. To ensure that the output is well-defined, such a reset may only be used finitely many times. In the context of computable analysis, this model was studied by a number of authors \cite{ziegler3,debrecht,paulyoracletypetwo,paulybrattka,paulybrattka2,paulyneumann}. For our purposes, an equivalent model based on non-deterministic computation turns out to be more useful. We say that a function from $\mathbf{X}$ to $\mathbf{Y}$ is non-deterministically computable with advice space $\mathbb{N}$, if on input $p$ (a name for some $x \in \mathbf{X}$) the machine can guess some $n \in \mathbb{N}$ and then either continue for $\omega$ many steps and output a valid name for $f(x)$, or at some finite time reject the guess. We demand that for any $p$ there is some $n \in \mathbb{N}$ that is not rejected. The equivalence of the two models is shown in \cite{paulybrattka}.

The interpretation of $\nabla$ in descriptive set theory is related to the $\Delta^0_2$-sets. In particular, the $\nabla$-open sets are the $\Delta^0_2$-sets, the continuous functions from $\mathbf{X}$ to $\mathbf{Y}^\nabla$ are the piecewise continuous functions for Polish $\mathbf{X}$, and the lifted version of admissibility under $\nabla$ corresponds to the Jayne-Rogers theorem (cf.~\cite{jaynerogers,ros,kihara4}). This was explored in detail by the authors in \cite{paulydebrecht}.

\section{$\nabla$-computably Noetherian spaces}
\label{sec:computablynoetherian}
In this section, we want to investigate the notion of being Noetherian in the setting of synthetic topology. We will see that the naive approach fails, but then provide a well-behaved definition. That it is adequate will be substantiated by providing a computable counterpart to the relationship between Noetherian spaces and well-quasiorders. First, however, we will explore a prototypical example.

\subsection{A case study on computably Noetherian spaces}
Let $\mathbb{N}_<$ be the natural numbers with the topology $\mathcal{T}_< := \{L_n := \{i \in \mathbb{N} \mid i \geq n\} \mid n \in \mathbb{N}\} \cup \{\emptyset\}$. Then let $\overline{\mathbb{N}}_<$ be the result of adjoining $\infty$, which is contained in all non-empty open sets. In $\overline{\mathbb{N}}_<$ we find a very simple yet non-trivial example of a quasi-Polish Noetherian space.

Similarly, let $\mathbb{N}_>$ be the natural numbers with the topology $\mathcal{T}_> := \{U_n := \{i \in\mathbb{N} \mid i < n\} \mid n \in \mathbb{N}\} \cup \{\mathbb{N}\}$.  By $\overline{\mathbb{N}}_>$ I denote the space resulting from adjoining an element $\infty$, which is only contained in one open set. In terms of representations, we can conceive of an element in $\mathbb{N}_<$ as being given as the limit of an increasing sequence, and of an element in $\mathbb{N}_>$ as the limit of a decreasing sequence.

Looking at the way how we defined $\mathcal{T}_<$, we see that we have a countable basis, and given indices of open sets, can e.g.~decide subset inclusion. The indexing is fully effective, in the sense that this is a computable basis as follows:

\begin{definition}[{\cite[Definition 9]{pauly-gregoriades}}]
An effective countable base for $\mathbf{X}$ is a computable sequence $(U_i)_{i \in \mathbb{N}} \in \mathcal{C}(\mathbb{N}, \mathcal{O}(\mathbf{X}))$ such that the multivalued partial map $\textrm{Base} :\subseteq \mathbf{X} \times \mathcal{O}(\mathbf{X}) \mto \mathbb{N}$ is computable. Here $\dom(\textrm{Base}) = \{(x, U) \mid x \in U\}$ and $n \in \textrm{Base}(x, U)$ iff $x \in U_n \subseteq U$.
\end{definition}

Even though all open sets are basis elements, we should still distinguish computability on the open sets themselves, and computability on the indices. For example, the map $\bigcup : \mathcal{O}(\mathbf{X})^\mathbb{N} \to \mathcal{O}(\mathbf{X})$, i.e.~the countable union of open sets, should always be a computable operation. This, however, cannot be done on the indices. More generally, in the synthetic topology framework the space of open subsets of a given space automatically comes with its own natural topology. This topology is obtained by demanding that given a point and an open set, we can recognize (semidecide) membership. In the case of $\mathbb{N}_<$, we can establish a quite convenient characterization of its open subsets:

\begin{proposition}
\label{prop:opensets}
The map $n \mapsto \{i \in \mathbb{N} \mid i \geq n\} : \overline{\mathbb{N}}_> \to \mathcal{O}(\mathbb{N}_<)$ is a computable isomorphism.
\begin{proof}
\begin{enumerate}
\item The map is computable.

Given $m \in \mathbb{N}_<$ and $n \in \overline{\mathbb{N}}_>$, we can semidecide $m \geq n$ (just wait until the increasing and the decreasing approximations pass each other).

\item The map is surjective.

At the moment some number $m$ is recognized to be an element of some open set $U \in \mathcal{O}(\mathbb{N}_<)$, we have only learned some lower bound on $m$ so far. Thus, any number greater than $m$ is contained in $U$, too. Hence all open subsets of $\mathbb{N}_>$ are final segments.

\item The inverse of the map is computable.

Given $U \in \mathcal{O}(\mathbb{N}_<)$, we can simultaneously begin testing $i \in U?$ for all $i \in \mathbb{N}$. Any positive test provides an upper bound for the $n$ such that $U =  \{i \in \mathbb{N} \mid i \geq n\}$.
\end{enumerate}
\end{proof}
\end{proposition}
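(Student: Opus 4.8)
The plan is to verify the three ingredients of a computable isomorphism separately: that the map $\Phi : n \mapsto L_n$ (writing $L_n := \{i \in \mathbb{N} \mid i \geq n\}$, with $\Phi(\infty) = \bigcap_n L_n = \emptyset$) is well-defined and injective, that it is computable, and that it is surjective with a computable inverse. Well-definedness and injectivity are immediate: each $L_n$ is open in $\mathbb{N}_<$ by the definition of $\mathcal{T}_<$, the $L_n$ form a strictly descending chain, and $\emptyset$ is distinct from every $L_n$ since $n \in L_n$. The substance is therefore the computability of $\Phi$ and of $\Phi^{-1}$, together with surjectivity.

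For computability of $\Phi$, I would exploit cartesian closedness: since $\mathcal{O}(\mathbb{N}_<) = \mathcal{C}(\mathbb{N}_<, \mathbb{S})$, the map $\Phi$ is computable iff its uncurrying $(n, m) \mapsto \chi_{L_n}(m) : \overline{\mathbb{N}}_> \times \mathbb{N}_< \to \mathbb{S}$ is, and this amounts to semideciding the predicate $m \geq n$. Using the intended representations -- an element of $\mathbb{N}_<$ presented as the limit of an increasing sequence $(a_t)_t$ and an element of $\overline{\mathbb{N}}_>$ presented by enumerating its upper bounds (equivalently, as the limit of a decreasing sequence $(b_t)_t$) -- the procedure simply waits for the two approximations to cross, outputting $\top$ as soon as some $a_t \geq b_t$. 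If $m \geq n$ then both integer sequences stabilize with $a_t = m \geq n = b_t$, so the test eventually fires; if $m < n$ then $a_t \leq m < n \leq b_t$ throughout and it never does. The degenerate case $n = \infty$ is handled uniformly, as then no finite upper bound is ever produced and $\chi_\emptyset \equiv \bot$ is output.

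Surjectivity and the inverse I would treat together, since the analysis of one yields the other. For surjectivity, note that membership $m \in U$ for $U \in \mathcal{O}(\mathbb{N}_<)$ is semidecided, and a positive verdict is reached on the basis of only an initial, lower-bounded part of the increasing approximation of $m$; hence $U$ is upward closed, so $U = L_{\min U}$ if $U \neq \emptyset$ and $U = \emptyset = \Phi(\infty)$ otherwise, and every open set lies in the image. The same observation gives $\Phi^{-1}$: for $U = L_n$ the set $\{i \mid i \in U\} = \{i \mid i \geq n\}$ is exactly the set of upper bounds of $n$, so running the semidecision tests $i \in U$ in parallel over all $i \in \mathbb{N}$ and emitting each confirmed $i$ produces precisely a valid name of $n$ in $\overline{\mathbb{N}}_>$; when $U = \emptyset$ no test ever succeeds, which is the name of $\infty$.

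The main obstacle, such as it is, lies not in any calculation but in matching the operational ``increasing/decreasing sequence'' intuition to the formal represented-space structure, and in particular in checking that the adjoined point behaves correctly throughout: one must confirm that $\infty$ corresponds to $\emptyset$ under $\Phi$, that the inverse genuinely outputs a legitimate name of $\infty$ (namely one exhibiting no finite upper bound) rather than diverging in an ill-typed way, and that the representation of $\overline{\mathbb{N}}_>$ is indeed the one for which ``enumerate upper bounds'' is the correct name format. Once that bookkeeping is pinned down, each of the three steps is a short verification.
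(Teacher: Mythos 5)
Your proof is correct and follows essentially the same route as the paper's: semideciding $m \geq n$ by waiting for the increasing and decreasing approximations to cross, deducing upward-closedness of open sets from the fact that a positive membership verdict is reached having learned only a lower bound on the point, and computing the inverse by running the membership tests in parallel to harvest upper bounds. The only difference is that you make explicit the bookkeeping for $\infty$ and for injectivity, which the paper leaves implicit.
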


The space of (saturated) compact subsets likewise comes with its own topology, in this case obtained by demanding that given a compact $K$ and an open $U$, we can recognize if $K \subseteq U$. Similarly to the preceding proposition, we can also characterize the compact subsets of $\mathbb{N}_>$:

\begin{proposition}
The map $n \mapsto \{i \in \mathbb{N} \mid i \geq n\} : \overline{\mathbb{N}}_< \to \mathcal{K}(\mathbb{N}_<)$ is a computable isomorphism.
\begin{proof}
\begin{enumerate}
\item The map is computable.

We need to show that given $n \in \overline{\mathbb{N}}_<$ and $U \in \mathcal{O}(\mathbb{N}_>)$ we can recognize that $\{i \in \mathbb{N} \mid i \geq n\} \subseteq U$. By Proposition \ref{prop:opensets}, we can assume that $U$ is of the form $U = \{i \in \mathbb{N} \mid i \geq m\}$ with $m \in \overline{\mathbb{N}}_>$. Now for such $n, m$, we can indeed semidecide $m \leq n$ -- again, just wait until the approximating sequences reach the same value.

\item The map is surjective.

While any subset of $\mathbb{N}_<$ is compact, only the saturated compact sets appear in $\mathcal{K}(\mathbb{N}_>)$, and these are the given ones.

\item The inverse map is computable.

Given a compact set $K \in \mathcal{K}(\mathbb{N}_<)$, we simultaneously test if it is covered by open sets of the form $\{i \mid i \geq m\}$. Any such $m$ we find provides a lower bound for the $n$ for which $K = \{i \mid i \geq n\}$ holds.
\end{enumerate}
\end{proof}
\end{proposition}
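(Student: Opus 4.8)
The plan is to establish the three ingredients of a computable isomorphism separately, in close parallel to the preceding characterization of $\mathcal{O}(\mathbb{N}_<)$: that the map $\phi \colon \overline{\mathbb{N}}_< \to \mathcal{K}(\mathbb{N}_<)$, $n \mapsto L_n$ (with $\infty \mapsto \emptyset$), is a bijection onto the saturated compact sets, that it is computable, and that its inverse is computable.

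First I would determine the image. Since $\mathbb{N}_<$ is Noetherian, every subset is compact by Theorem \ref{theo:glnoethcharac}, so the only constraint on membership in $\mathcal{K}(\mathbb{N}_<)$ is saturation. Computing the saturation of an arbitrary $A \subseteq \mathbb{N}$, the open sets containing a nonempty $A$ are exactly the $L_k$ with $k \leq \min A$, whence $\sat{A} = L_{\min A}$, while $\sat{\emptyset} = \emptyset$. Thus the saturated sets are precisely the final segments $L_n$ together with $\emptyset$, which are exactly the values taken by $\phi$; this makes $\phi$ a bijection onto $\mathcal{K}(\mathbb{N}_<)$.

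Next, for computability, I would exploit Proposition \ref{prop:opensets} to replace an arbitrary open set $U \in \mathcal{O}(\mathbb{N}_<)$ by a presentation $U = L_m$ with $m \in \overline{\mathbb{N}}_>$. By the representation of $\mathcal{K}(\mathbb{N}_<)$ it suffices to semidecide $L_n \subseteq U$, and $L_n \subseteq L_m$ holds iff $m \leq n$. Since $n$ arrives as an increasing approximation and $m$ as a decreasing one, I would semidecide $m \leq n$ by searching for confirmed bounds $n \geq a$ and $m \leq b$ with $b \leq a$, halting once they meet. For the inverse, given $K = L_n \in \mathcal{K}(\mathbb{N}_<)$, I would produce the increasing-approximation name of $n = \phi^{-1}(K)$ by testing $K \subseteq L_k$ in parallel over all $k$: the confirmed indices are exactly $\{k \mid k \leq n\}$, which is the required name.

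The main obstacle I anticipate is the computability direction, and in particular getting the interface between the two representations right. The argument only goes through after rephrasing open subsets of $\mathbb{N}_<$ through $\overline{\mathbb{N}}_>$ via Proposition \ref{prop:opensets}, after which the entire content reduces to semideciding the single inequality $m \leq n$ between an increasing and a decreasing approximation. I would take care to verify that this semidecision is sound in both directions --- that the approximating sequences provably meet exactly when $m \leq n$ and never otherwise --- since this is precisely where the asymmetry between the $<$- and $>$-topologies is used.
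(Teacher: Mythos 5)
Your proposal is correct and follows essentially the same route as the paper: identify the saturated compact subsets of $\mathbb{N}_<$ as the final segments (plus $\emptyset$), reduce computability of the forward map via Proposition \ref{prop:opensets} to semideciding $m \leq n$ between a decreasing and an increasing approximation, and compute the inverse by testing $K \subseteq L_k$ in parallel over all $k$. The only additions are that you spell out the saturation computation explicitly, and that you flag the soundness of the meeting-of-approximations test as the delicate point --- a point the paper itself treats no more carefully, as it likewise just ``waits until the approximating sequences reach the same value.''
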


So we see that while the {\bf spaces} $\mathcal{O}(\mathbb{N}_<)$ and $\mathcal{K}(\mathbb{N}_<)$ contain the same points, their topologies differ -- and are, in fact, incomparable. There are two potential ways to capture the idea that \emph{opens are compact} in a synthetic way:

We could work with open and compact sets when in a Noetherian space, i.e.~with the space $\mathcal{O}(\mathbb{N}_<) \wedge \mathcal{K}(\mathbb{N}_<)$ carrying the join of the topologies. As $\mathbb{N}_< \wedge \mathbb{N}_> \cong \mathbb{N}$, in this special cases we would end up in the same situation as using computability on base indices straightaway. In general though it is not even obvious if $\cap : \left (\mathcal{O}(\mathbf{X}) \wedge \mathcal{K}(\mathbf{X})\right ) \times \left (\mathcal{O}(\mathbf{X}) \wedge \mathcal{K}(\mathbf{X})\right ) \to \left (\mathcal{O}(\mathbf{X}) \wedge \mathcal{K}(\mathbf{X})\right )$ should be computable.

The second approach relies on the observation that $\mathbb{N}_<$ and $\mathbb{N}_>$ do not differ by \emph{that much}. We can consider \emph{computability with finitely many mindchanges} --  and the distinction between $\mathbb{N}_<$, $\mathbb{N}_>$ and $\mathbb{N}$ disappears, as we find $\mathbb{N}_<^\nabla \cong \mathbb{N}_>^\nabla \cong\mathbb{N}^\nabla$. As the next subsection shows, computability with finitely many mindchanges seems adequate to give \emph{opens are compact} a computable interpretation.

\subsection{The abstract approach}
The straightforward approach to formulate a synthetic topology version of Noetherian would be the following:

\begin{definition}[Hypothetical]
\label{def:hypo}
Call a space $\mathbf{X}$ \emph{computably Noetherian}, iff $\id_{\mathcal{O},\mathcal{K}} : \mathcal{O}(\mathbf{X}) \to \mathcal{K}(\mathbf{X})$ is well-defined and computable.
\end{definition}

This fails entirely, though:

\begin{observation}
Let $\mathbf{X}$ be non-empty. Then $\mathbf{X}$ is not computably Noetherian according to Definition \ref{def:hypo}.
\begin{proof}
Note that $\mathalpha{\subseteq} : \mathcal{K}(\mathbf{X}) \times \mathcal{O}(\mathbf{X}) \to \mathbb{S}$ is by definition of $\mathcal{K}$ a computable map, i.e.~inclusion of a compact in an open set is semidecidable. Furthermore, $\iota : \mathbb{S} \to \mathbf{X}$ defined via $\iota(\top) = X$ and $\iota(\bot) = \emptyset$ is a always a computable injection for non-empty $\mathbf{X}$. Now if $\mathbf{X}$ were computably Noetherian, then the map $t \mapsto \mathalpha{\subseteq}(\id_{\mathcal{O},\mathcal{K}}(\iota(t)), \emptyset)$ would be computable and identical to $\neg : \mathbb{S} \to \mathbb{S}$, but the latter is non-computable.
\end{proof}
\end{observation}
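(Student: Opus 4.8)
The plan is to assume $\mathbf{X}$ is computably Noetherian and to build from the hypothetically computable map $\id_{\mathcal{O},\mathcal{K}} : \mathcal{O}(\mathbf{X}) \to \mathcal{K}(\mathbf{X})$ a computable map $\mathbb{S} \to \mathbb{S}$ that coincides with negation $\neg$, contradicting the non-computability of $\neg$ recorded in Section~\ref{sec:background}. The two computable ingredients I would rely on are already available: by the very definition of the representation of $\mathcal{K}(\mathbf{X})$ as a subspace of $\mathcal{O}(\mathcal{O}(\mathbf{X}))$, the inclusion test $\subseteq : \mathcal{K}(\mathbf{X}) \times \mathcal{O}(\mathbf{X}) \to \mathbb{S}$ is computable (we can semidecide $K \subseteq U$); and a constant open set can be produced computably from a flag in $\mathbb{S}$.

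First I would introduce the encoding $\iota : \mathbb{S} \to \mathcal{O}(\mathbf{X})$ with $\iota(\top) = X$ and $\iota(\bot) = \emptyset$, realised by sending $t \in \mathbb{S}$ to the open set whose characteristic function is the constant map $x \mapsto t$; since this ignores the input point, $\iota$ is computable. Here non-emptiness of $\mathbf{X}$ is essential: it guarantees $X \neq \emptyset$, so that $\iota$ genuinely separates the two \sierp{} values as sets. Note also that being computably Noetherian includes well-definedness of $\id_{\mathcal{O},\mathcal{K}}$, which in particular makes $X$ compact, so that $\id_{\mathcal{O},\mathcal{K}}(\iota(t))$ is meaningful for both values of $t$.

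Next I would assemble the composite $\Phi : \mathbb{S} \to \mathbb{S}$, $\Phi(t) = \mathord{\subseteq}\bigl(\id_{\mathcal{O},\mathcal{K}}(\iota(t)),\, \emptyset\bigr)$, and evaluate it on the two points. For $t = \top$ the open set $\iota(\top) = X$ is carried to $X$ regarded as a compact set, and since $X \neq \emptyset$ we have $X \not\subseteq \emptyset$, so $\Phi(\top) = \bot$. For $t = \bot$ we get $\iota(\bot) = \emptyset$, and $\emptyset \subseteq \emptyset$ holds, so $\Phi(\bot) = \top$. Hence $\Phi = \neg$. As $\iota$, $\id_{\mathcal{O},\mathcal{K}}$ and $\mathord{\subseteq}$ are all computable --- the middle one by the standing hypothesis, the outer two as above --- $\Phi$ would be computable, which is the desired contradiction.

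I do not anticipate a serious obstacle: every step is either definitional or quoted from the background, so the argument is essentially a one-line reduction. The only points demanding care are the bookkeeping of types --- reading $\iota$ as valued in $\mathcal{O}(\mathbf{X})$ rather than in $\mathbf{X}$, and confirming that testing inclusion against the \emph{empty} open set amounts to testing emptiness of the compact set --- together with the small conceptual observation that routing a fullness flag into $\mathcal{K}(\mathbf{X})$ via $\id_{\mathcal{O},\mathcal{K}}$ and reading it back out through $\mathord{\subseteq}$ inverts the flag. That inversion is exactly the obstruction that dooms Definition~\ref{def:hypo} and motivates passing to the finite-mindchange refinement in the following subsection.
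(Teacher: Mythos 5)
Your proposal is correct and follows essentially the same route as the paper: encode a \sierp{} truth value as the open set $X$ or $\emptyset$, push it through $\id_{\mathcal{O},\mathcal{K}}$, and test inclusion in $\emptyset$ to recover the negated flag, contradicting the non-computability of $\neg : \mathbb{S} \to \mathbb{S}$. (You even silently correct the paper's typo by typing $\iota$ as $\mathbb{S} \to \mathcal{O}(\mathbf{X})$ rather than $\mathbb{S} \to \mathbf{X}$.)
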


We can avoid this problem by relaxing the computability-requirement to computability with finitely many mindchanges. Now we can try again:

\begin{definition}
\label{def:real}
Call a space $\mathbf{X}$ $\nabla$-\emph{computably Noetherian}, iff $\id_{\mathcal{O},\mathcal{K}} : \mathcal{O}(\mathbf{X}) \to \left (\mathcal{K}(\mathbf{X}) \right )^\nabla$ is well-defined and computable.
\end{definition}

Say that an effective countable base is \emph{nice}, if $\{\langle u, v\rangle \mid \left (U_{u(1)} \cup \ldots \cup U_{u(|u|)} \right ) \subseteq \left (U_{v(1)} \cup \ldots \cup U_{v(|u|)} \right )\} \subseteq \mathbb{N}^* \times \mathbb{N}^*$ is decidable. Clearly any effective countable base is nice relative to some oracle, hence this requirement is unproblematic from the perspective of continuity.

We can now state and prove the following theorem, which can be seen as a uniform counterpart to Theorem \ref{theo:glnoethcharac}:

\begin{theorem}
Let $\mathbf{X}$ be quasi-Polish, and in particular have a nice effective countable base. Then the following are equivalent:
\begin{enumerate}
\item $\mathbf{X}$ is $\nabla$-computably Noetherian
\item $\id_{\mathcal{O},\mathcal{K}} : \mathcal{O}(\mathbf{X}) \to \left (\mathcal{K}(\mathbf{X}) \right )^\nabla$ is well-defined and computable.
\item $\mathalpha{\subseteq} : \mathcal{O}(\mathbf{X}) \times \mathcal{O}(\mathbf{X}) \to \mathbb{S}^\nabla$ is computable.
\item $\operatorname{Stabilize} : \mathcal{C}(\mathbb{N},\mathcal{O}(\mathbf{X})) \mto \mathbb{N}^\nabla$ is well-defined and computable, where $N \in \operatorname{Stabilize}((V_i)_{i \in \mathbb{N}})$ iff $\left (\bigcup_{i = 0}^N V_i \right ) = \left (\bigcup_{i \in \mathbb{N}} V_i \right )$.
\item $\operatorname{Stabilize} : \mathcal{C}(\mathbb{N},\mathcal{A}(\mathbf{X})) \mto \mathbb{N}^\nabla$ is well-defined and computable, where $N \in \operatorname{Stabilize}((A_i)_{i \in \mathbb{N}})$ iff $\left (\bigcap_{i = 0}^N A_i \right ) = \left (\bigcup_{i \in \mathbb{N}} A_i \right )$.
\item The computable map $u \mapsto \left (U_{u(1)} \cup \ldots \cup U_{u(|u|)} \right ) : \mathbb{N}^* \to \mathcal{O}(\mathbf{X})$ is a surjection and has a $\nabla$-computable right-inverse.
\end{enumerate}
Note that the forward implications hold for arbitrary represented spaces, as long as they make sense.
\begin{proof}
\begin{description}
\item[$1. \Leftrightarrow 2.$] This is the definition.
\item[$2. \Rightarrow 3.$] By taking into account the definition of $\mathcal{K}$, we have $\id_{\mathcal{O},\mathcal{K}} : \mathcal{O}(\mathbf{X}) \to \left (\mathcal{C}(\mathcal{O}(\mathbf{X}), \mathbb{S} ) \right )^\nabla$. Moreover, $\id : \mathcal{C}(\mathbf{Y}, \mathbf{Z})^\nabla \to \mathcal{C}(\mathbf{Y}, \mathbf{Z}^\nabla)$ is always computable, so currying yields the claim.
\item[$3. \Rightarrow 4.$] First, we prove that $\operatorname{Stabilize}$ is well-defined. Assume that it is not, then there is a family $(V_i)_{i \in \mathbb{N}}$ of open sets such that $V := \bigcup_{i \in \mathbb{N}} V_i \neq \bigcup_{i = 0}^N V_i$ for all $N \in \mathbb{N}$. Consider the computable map $q \mapsto \mathalpha{\subseteq}\left (V, \bigcup_{i \in \mathbb{N}} V_{q(i)}  \right ) : \Baire \to \mathbb{S}^\nabla$. If the range of $q$ is finite, then the output must be $\bot$, if the range of $q$ is $\mathbb{N}$, then the output must be $\top$. However, these two cases cannot be distinguished in a $\Delta^0_2$-way, thus the $(V_i)_{i \in \mathbb{N}}$ cannot exist, and $\operatorname{Stabilize}$ is well-defined..

    To see that we can compute the (multivalued) inverse, we employ the equivalence to $\nabla$-computability and non-deterministic computation with advice space $\mathbb{N}$ from \cite{paulybrattka}. Given $(V_i)_{i \in \mathbb{N}}$, we guess $N \in \mathbb{N}$ together with an upper bound $b$ on the number of mindchanges happening in verifying that $\mathalpha{\subseteq}(\bigcup_{i = 0}^N V_i, \bigcup_{i \in \mathbb{N}} V_i ) = \top$. Any correct guess contains a valid solution, and any wrong guess can be rejected.
\item[$4. \Leftrightarrow 5.$] By de Morgan's law.
\item[$4. \Rightarrow 6.$] In a quasi-Polish space $\mathbf{X}$ with effectively countable basis $(U_i)_{i \in \mathbb{N}}$, any $U \in \mathcal{O}(\mathbf{X})$ can be effectively represented by $p \in \Baire$ with $U = \bigcup_{i \in \mathbb{N}} U_{p(i)}$. Applying stabilize to the family $(U_{p(i)})_{i \in \mathbb{N}}$ shows subjectivity and computability of the multivalued inverse.
 \item[$6. \Rightarrow 2.$] We will argue that $u \mapsto \left (U_{u(1)} \cup \ldots \cup U_{u(|u|)} \right ) : \mathbb{N}^* \to \mathcal{K}(\mathbf{X})$ is computable, provided that $(U_n)_{n \in \mathbb{N}}$ is a nice basis. For this, note that given $u \in \mathbb{N}^*$ and $p \in \Baire$, we can semidecide whether $\left (U_{u(1)} \cup \ldots \cup U_{u(|u|)} \right ) \subseteq \bigcup_{n \in \mathbb{N}} U_{p(n)}$.
\end{description}
\end{proof}
\end{theorem}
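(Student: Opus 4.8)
The plan is to prove the six conditions equivalent via the cycle $2 \Rightarrow 3 \Rightarrow 4 \Rightarrow 6 \Rightarrow 2$, together with the trivial equivalence $1 \Leftrightarrow 2$ (this is just the definition of $\nabla$-computably Noetherian) and the duality $4 \Leftrightarrow 5$. Conditions $3$ through $6$ refer only to general hyperspace and endofunctor machinery, so most of the forward implications should go through for arbitrary represented spaces; the quasi-Polish structure and the nice effective base enter only where an explicit enumeration of a basis is needed, namely in $4 \Rightarrow 6$ and $6 \Rightarrow 2$.

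For $2 \Rightarrow 3$ I would unfold the definition $\mathcal{K}(\mathbf{X}) \hookrightarrow \mathcal{O}(\mathcal{O}(\mathbf{X})) = \mathcal{C}(\mathcal{O}(\mathbf{X}),\mathbb{S})$, so that $\id_{\mathcal{O},\mathcal{K}}$ reads as a computable map $\mathcal{O}(\mathbf{X}) \to \mathcal{C}(\mathcal{O}(\mathbf{X}),\mathbb{S})^\nabla$. Using the always-computable map $\id : \mathcal{C}(\mathbf{Y},\mathbf{Z})^\nabla \to \mathcal{C}(\mathbf{Y},\mathbf{Z}^\nabla)$ followed by uncurrying then produces the inclusion test $\mathalpha{\subseteq} : \mathcal{O}(\mathbf{X}) \times \mathcal{O}(\mathbf{X}) \to \mathbb{S}^\nabla$. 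The step $4 \Leftrightarrow 5$ is immediate by complementation: a descending chain of closed sets stabilizes exactly when the complementary ascending chain of open sets does, and the witnessing index is the same.

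The genuine crux is $3 \Rightarrow 4$, which I expect to be the main obstacle, and it splits into well-definedness and computability. For well-definedness I would argue by contradiction: if some family $(V_i)_{i \in \mathbb{N}}$ satisfied $\bigcup_{i \leq N} V_i \subsetneq \bigcup_{i} V_i$ for every $N$, then precomposing the $\mathbb{S}^\nabla$-valued inclusion test with $q \mapsto \bigcup_i V_{q(i)}$ would yield $\bot$ when $q$ has finite range and $\top$ when $q$ is surjective onto $\mathbb{N}$, thereby deciding finiteness of range in a $\Delta^0_2$ manner, which is impossible. The delicate point here is to be careful about which topology $\mathcal{C}(\mathbb{N},\mathcal{O}(\mathbf{X}))$ carries and why this separation really is forbidden for $\mathbb{S}^\nabla$. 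For the computability of the stabilization index I would pass to the equivalent non-deterministic model with advice space $\mathbb{N}$ of \cite{paulybrattka}: on input $(V_i)_{i \in \mathbb{N}}$, guess $N$ together with an upper bound on the number of mindchanges needed to confirm $\mathalpha{\subseteq}(\bigcup_{i \leq N} V_i, \bigcup_i V_i) = \top$, accepting once verification succeeds and rejecting any guess that is exceeded.

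Finally, $4 \Rightarrow 6$ uses the effective base to write any $U \in \mathcal{O}(\mathbf{X})$ as $\bigcup_i U_{p(i)}$, applies $\operatorname{Stabilize}$ to the family $(U_{p(i)})_{i \in \mathbb{N}}$, and reads off a finite index word; this simultaneously witnesses surjectivity of the finite-union map $\mathbb{N}^* \to \mathcal{O}(\mathbf{X})$ and supplies its $\nabla$-computable right-inverse. To close the loop, $6 \Rightarrow 2$ relies on niceness: since inclusion of a finite basic union $U_{u(1)} \cup \cdots \cup U_{u(|u|)}$ into an enumerated open set $\bigcup_n U_{p(n)}$ is semidecidable, the map $u \mapsto U_{u(1)} \cup \cdots \cup U_{u(|u|)} : \mathbb{N}^* \to \mathcal{K}(\mathbf{X})$ is computable, and composing it with the $\nabla$-computable right-inverse from condition $6$ yields exactly $\id_{\mathcal{O},\mathcal{K}} : \mathcal{O}(\mathbf{X}) \to (\mathcal{K}(\mathbf{X}))^\nabla$.
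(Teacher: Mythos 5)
Your proposal is correct and follows essentially the same route as the paper's own proof: the cycle $2 \Rightarrow 3 \Rightarrow 4 \Rightarrow 6 \Rightarrow 2$ with $1 \Leftrightarrow 2$ by definition and $4 \Leftrightarrow 5$ by complementation, the same currying argument for $2 \Rightarrow 3$, the same contradiction via distinguishing finite from full range of $q$ through $q \mapsto \mathalpha{\subseteq}(V, \bigcup_i V_{q(i)})$ for well-definedness in $3 \Rightarrow 4$, the same non-deterministic guessing of $N$ and a mindchange bound for computability, and the same use of niceness to make $u \mapsto U_{u(1)} \cup \ldots \cup U_{u(|u|)}$ computable into $\mathcal{K}(\mathbf{X})$ for $6 \Rightarrow 2$. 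No substantive differences.
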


All finite spaces containing only computable points are $\nabla$-computably Noetherian; any quasi-Polish Noetherian space is $\nabla$-computably Noetherian relative to some oracle (which is not vacuous). $\nabla$-computably Noetherian spaces are closed under finite products and finite coproducts, and computable images of $\nabla$-computably Noetherian spaces are $\nabla$-computably Noetherian.

\subsection{Well-quasiorders and $\nabla$-computably Noetherian spaces}
A quasiorder $(X,\preceq)$ can be seen as a topological space via the Alexandrov topology, which consists of the upper sets regarding $\preceq$. The quasiorder is recovered from the topology as the specialization order (i.e. $x \preceq y$ iff $x \in \overline{\{y\}}$). As mentioned in the introduction, the Alexandrov topology of a quasiorder is Noetherian iff the quasiorder is a well-quasiorder. Here, we shall investigate the computability aspects of this connection in the case of countable quasiorders, more precisely, quasiorders over $\mathbb{N}$. We first consider arbitrary quasiorders over $\mathbb{N}$, before coming to the special case of well-quasiorders.

\subsubsection*{Arbitrary quasiorders over $\mathbb{N}$ and their Alexandrov topologies}

\begin{definition}
Given some quasiorder $(\mathbb{N},\preceq)$ we define the represented space $\mathrm{Av}(\preceq)$ to have the underlying set $\mathbb{N}$ and the representation $\psi_\preceq : \subseteq \Baire \to \mathbb{N}$ defined via $\psi_\preceq(p) = \mathbf{n}$ iff: \[\{ k \in \mathbb{N} \mid k \preceq \mathbf{n}\} = \{p(i) \mid i \in \mathbb{N}\}\]
\end{definition}

The represented space $\mathrm{Av}(\preceq)$ corresponds to the Alexandrov-topology induced by $\preceq$. This is seen by the following proposition, which also establishes some basic observations on how computability works in this setting.

\begin{proposition}
\label{prop:quasiorder-opens}
Let $\preceq$ be computable. Then
\begin{enumerate}
\item $\uparrow_\preceq : \mathcal{O}(\mathbb{N}) \to \mathcal{O}(\mathbb{N})$ is computable.
\item $\uparrow_\preceq : \mathcal{O}(\mathbb{N}) \to \mathcal{O}(\mathrm{Av}(\preceq))$ is a computable surjection.
\item $\id : \mathbb{N} \to \mathrm{Av}(\preceq)$ is computable.
\item $\id : \mathcal{O}(\mathrm{Av}(\preceq)) \to \mathcal{O}(\mathbb{N})$ is a computable embedding.
\end{enumerate}
\begin{proof}
\begin{enumerate}
\item Straight-forward.
\item To show that the map is computable, by (1) it suffices to show that given some $\preceq$-upwards closed set $U \in \mathcal{O}(\mathbb{N})$ and $\mathbf{n} \in \mathrm{Av}(\preceq)$, we can semidecide if $\mathbf{n} \in U$. But given the definition of $\mathrm{Av}(\preceq)$, we find that $\psi_\preceq(p) \in U$ iff $\exists i \ p(i) \in U$, hence the semidecidability follows.

    It remains to argue that map is surjective, i.e.~that any $U \in \mathcal{O}(\mathrm{Av}(\preceq))$ is $\preceq$-upwards closed. Assume for the sake of a contradiction that $U \in \mathcal{O}(\mathrm{Av}(\preceq))$ is not upwards-closed, i.e.~that there are $\mathbf{n} \in U$, $\mathbf{m} \notin U$ with $\mathbf{n} \preceq \mathbf{m}$. Pick some $\psi_\preceq$-name $p$ of $\mathbf{n}$ and a realizer $u$ of $\chi_U : \mathrm{Av}(\preceq) \to \mathbb{S}$. Now $u$ will accept the input $p$ after having read some finite prefix $w$ of $p$. Let $q$ be a $\psi_\preceq$-name of $\mathbf{m}$. Now $\psi_\preceq(wq) = \mathbf{m}$, and $u$ will accept $wq$, hence $\mathbf{m} \in U$ follows.
\item Straight-forward.
\item That $\id : \mathcal{O}(\mathrm{Av}(\preceq)) \to \mathcal{O}(\mathbb{N})$ is computable follows from (3). Its computable inverse is given by $\uparrow_\preceq : \mathcal{O}(\mathbb{N}) \to \mathcal{O}(\mathrm{Av}(\preceq)$ from (2).
\end{enumerate}
\end{proof}
\end{proposition}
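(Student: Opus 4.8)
The plan is to treat the four statements in the order $(1),(3),(2),(4)$, since $(1)$ and $(3)$ are the elementary ingredients from which the other two are assembled. Throughout, the engine is the computability of $\preceq$ together with the fact that a $\psi_\preceq$-name of $\mathbf{n}$ is precisely an enumeration of the down-set $\{k \mid k \preceq \mathbf{n}\}$, which contains $\mathbf{n}$ itself by reflexivity.

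For $(1)$, recall that a name of $U \in \mathcal{O}(\mathbb{N})$ lets us semidecide $\mathbf{n} \in U$ for each $\mathbf{n}$. To produce a name of $\uparrow_\preceq U$ I would semidecide $\mathbf{m} \in \uparrow_\preceq U$ by dovetailing the search for some $\mathbf{n}$ with $\mathbf{n} \in U$ (semidecidable) and $\mathbf{n} \preceq \mathbf{m}$ (decidable, as $\preceq$ is computable); this is exactly the unfolding of $\mathbf{m} \in \uparrow_\preceq U \Leftrightarrow \exists \mathbf{n}\,(\mathbf{n} \in U \wedge \mathbf{n} \preceq \mathbf{m})$. For $(3)$, given $\mathbf{n}$ via $\delta_\mathbb{N}$ I would output an enumeration of $\{k \mid k \preceq \mathbf{n}\}$, which is computable since $\preceq$ is; this is by definition a $\psi_\preceq$-name of $\mathbf{n}$.

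For $(2)$, computability would follow from $(1)$ once I show that for an upward-closed $U$ and $\mathbf{n}$ presented by a $\psi_\preceq$-name $p$ one has $\mathbf{n} \in U \Leftrightarrow \exists i\, p(i) \in U$: the forward direction uses that $\mathbf{n}$ occurs in the enumeration $p$ of its own down-set, and the backward direction uses upward-closure of $U$ together with $p(i) \preceq \mathbf{n}$. The right-hand side is semidecidable, so together with $(1)$ this yields the realizer. The harder half is surjectivity, i.e.\ that every $U \in \mathcal{O}(\mathrm{Av}(\preceq))$ is $\preceq$-upward closed, from which $\uparrow_\preceq U = U$ and hence surjectivity are immediate. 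I would argue by contradiction: if $\mathbf{n} \in U$, $\mathbf{m} \notin U$ and $\mathbf{n} \preceq \mathbf{m}$, fix a name $p$ of $\mathbf{n}$ and a realizer $u$ of $\chi_U$; by continuity $u$ accepts $p$ after reading a finite prefix $w$. The key point, and the main obstacle, is to check that $w$ can be extended to a legitimate name of $\mathbf{m}$: transitivity gives that every entry of $w$ lies in the down-set of $\mathbf{m}$, so appending any name $q$ of $\mathbf{m}$ yields $wq$ with $\psi_\preceq(wq) = \mathbf{m}$; monotonicity of $u$ then forces $\mathbf{m} \in U$, the desired contradiction.

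Finally, for $(4)$, computability of $\id : \mathcal{O}(\mathrm{Av}(\preceq)) \to \mathcal{O}(\mathbb{N})$ is the contravariant action on opens of the computable map $\id : \mathbb{N} \to \mathrm{Av}(\preceq)$ from $(3)$ (preimages of opens along a computable map are uniformly computable). To upgrade this to an embedding I would exhibit a computable partial inverse on the image: by $(2)$ the image is exactly the upward-closed sets, on which $\uparrow_\preceq$ acts as the identity, and $\uparrow_\preceq : \mathcal{O}(\mathbb{N}) \to \mathcal{O}(\mathrm{Av}(\preceq))$ is computable by $(2)$, so it serves as the required inverse.
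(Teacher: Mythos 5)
Your proposal is correct and follows essentially the same route as the paper: reduce (2) to (1) via the equivalence $\mathbf{n}\in U\Leftrightarrow\exists i\,p(i)\in U$ for upward-closed $U$, prove surjectivity by the prefix-splicing contradiction, and obtain (4) from (3) and (2). The only difference is that you fill in details the paper leaves implicit (the dovetailing realizers for (1) and (3), and the verification via transitivity that $wq$ is a legitimate $\psi_\preceq$-name of $\mathbf{m}$), which is a welcome tightening rather than a change of method.
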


The terminology \emph{Alexandrov topology} goes back to an observation by Pavel Alexandrov \cite{alexandrov} that certain topological spaces correspond to partial orders, namely those characterized by the property that arbitrary intersections of open sets are open again. Further characterizations are provided in \cite{arenas}. Similar to the failure of the naive Definition \ref{def:hypo}, one can readily check that e.g.~$\bigcap : \mathcal{O}(\mathbf{X})^\mathbb{N} \to \mathcal{O}(\mathbf{X})$ is never a continuous well-defined map, as long as $\mathbf{X}$ is non-empty. Thus, this characterization does not extend to a uniform statement in a straight-forward manner.

Next, we shall explore the compact subsets of $\mathrm{Av}(\preceq)$. As usual in the study of represented spaces, we restrict our attention to the \emph{saturated} compact subsets. Recall that $A \subseteq \mathbf{X}$ is called saturated, iff $A = \bigcap {\{U \in \mathcal{O} \mid A \subseteq U\}}$; the saturation of a set $A$ is $\bigcap {\{U \in \mathcal{O} \mid A \subseteq U\}}$. As a set is compact iff its saturation is, this restriction is without loss of generality. In $\mathrm{Av}(\preceq)$, a set is saturated iff it is upwards closed.

\begin{proposition}
\label{prop:quasiordercompacts}
The map $n_0\ldots n_k \mapsto \uparrow \{n_0,\ldots,n_k\} : \mathbb{N}^* \to \mathcal{K}(\mathrm{Av}(\preceq))$ is a computable surjection.
\begin{proof}
To show that the map is computable, we need to argue that given $n_0\ldots n_k \in \mathbb{N}^*$ and $U \in \mathcal{O}(\mathrm{Av}(\preceq))$, we can semidecide if $\uparrow \{n_0,\ldots,n_k\} \subseteq U$. Since $U$ itself is upwards closed, this is equivalent to $\{n_0,\ldots,n_k\} \subseteq U$. It follows from Proposition \ref{prop:quasiorder-opens} (4) that it is.

To see that the map is surjective, consider some $A \in \mathcal{K}(\mathrm{Av}(\preceq))$. As $A$ is upwards closed, we find that in particular also $A \in \mathcal{O}(\mathrm{Av}(\preceq))$ (in a non-uniform way of course). As $\mathalpha{\subseteq} : \mathcal{K}(\mathbf{X}) \times \mathcal{O}(\mathbf{X}) \to \mathbb{S}$ is computable, we can given the compact set $A$ and the open set $A$ semidecide that indeed $A \subseteq A$. At the moment of the decision, only finite information about the sets has been read. In particular, by Proposition \ref{prop:quasiorder-opens} (4) we can assume that all we have learned about the open set $A$ is $\{n_0,\ldots,n_k\} \subseteq A$ for some finite set $\{n_0,\ldots,n_k\}$. As the semidecision procedure would also accept the compact set $A$ and the open set $\uparrow \{n_0,\ldots,n_k\}$, it follows that $A = \uparrow \{n_0,\ldots,n_k\}$.
\end{proof}
\end{proposition}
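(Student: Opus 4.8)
The plan is to verify computability and surjectivity separately, both relying on the identification of $\mathcal{O}(\mathrm{Av}(\preceq))$ with the upwards-closed subsets of $\mathbb{N}$ furnished by Proposition~\ref{prop:quasiorder-opens}(4). For computability, recall that $\mathcal{K}(\mathrm{Av}(\preceq))$ identifies a saturated compact $K$ with the open set $\{U \in \mathcal{O}(\mathrm{Av}(\preceq)) \mid K \subseteq U\}$, so it suffices to show that, given a word $u = n_0\ldots n_k \in \mathbb{N}^*$ and an open set $U \in \mathcal{O}(\mathrm{Av}(\preceq))$, one can semidecide $\uparrow\{n_0,\ldots,n_k\} \subseteq U$. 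The simplification I would exploit is that every such $U$ is upwards closed, so together with reflexivity of $\preceq$ one has $\uparrow\{n_0,\ldots,n_k\} \subseteq U$ iff $\{n_0,\ldots,n_k\} \subseteq U$. Proposition~\ref{prop:quasiorder-opens}(4) then makes each membership test $n_i \in U$ semidecidable, and a finite conjunction of semidecidable predicates is again semidecidable.

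For surjectivity, I would take an arbitrary $A \in \mathcal{K}(\mathrm{Av}(\preceq))$ and use that, being saturated in $\mathrm{Av}(\preceq)$, it is upwards closed and therefore (non-uniformly) also an open set. The idea is to run the computable test $\mathalpha{\subseteq} : \mathcal{K}(\mathrm{Av}(\preceq)) \times \mathcal{O}(\mathrm{Av}(\preceq)) \to \mathbb{S}$ on the pair $(A,A)$, feeding $A$ in both as a compact and as an open set. Since $A \subseteq A$, the test accepts, and by continuity it accepts after reading only finite prefixes of its two inputs. By Proposition~\ref{prop:quasiorder-opens}(4) the finite information observed about the open argument amounts to witnessing $\{n_0,\ldots,n_k\} \subseteq A$ for some finite set $\{n_0,\ldots,n_k\}$.

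The crux, and the step I expect to be the main obstacle, is the ensuing extensionality argument. Because the test accepted having seen only $\{n_0,\ldots,n_k\}$ enumerated into its open argument, it must equally accept $(A, \uparrow\{n_0,\ldots,n_k\})$, since $\uparrow\{n_0,\ldots,n_k\}$ admits a name beginning with the same prefix; correctness of the test then gives $A \subseteq \uparrow\{n_0,\ldots,n_k\}$. The reverse inclusion is immediate, as $\{n_0,\ldots,n_k\} \subseteq A$ and upward closure of $A$ yield $\uparrow\{n_0,\ldots,n_k\} \subseteq A$, so that $A = \uparrow\{n_0,\ldots,n_k\}$ lies in the image. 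The delicate point is to justify that under the $\mathcal{O}(\mathbb{N})$-embedding the finite prefix read from the open argument genuinely encodes a finite subset of $A$ and witnesses nothing stronger, so that substituting $\uparrow\{n_0,\ldots,n_k\}$ as the open argument leaves the computation's accepting behaviour intact.
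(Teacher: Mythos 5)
Your proposal is correct and follows essentially the same route as the paper's proof: computability via upward closure reducing the inclusion test to finitely many semidecidable membership queries through Proposition~\ref{prop:quasiorder-opens}(4), and surjectivity via the finite-prefix/extensionality argument applied to the computable test $\mathalpha{\subseteq}$ on the pair $(A,A)$. The ``delicate point'' you flag at the end is precisely the step the paper also relies on (a name of $\uparrow\{n_0,\ldots,n_k\}$ extending the observed prefix), so nothing further is needed.
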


Except for trivial cases, the map from the preceding proposition cannot be computably invertible: A compact set $A \in \mathcal{K}(\mathrm{Av}(\preceq))$ can always shrink, whereas each $n_0\ldots n_k \in \mathbb{N}^*$ is completely determined at some finite time. However, moving to computability with finitely many mindchanges suffices to bridge the gap:

\begin{proposition}
\label{prop:basecompactcomputable}
If $\preceq$ is computable, then the multivalued map $\operatorname{Base} : \mathcal{K}(\mathrm{Av}(\preceq)) \mto \left (\mathbb{N}^* \right )^\nabla$ where $n_0\ldots n_k \in \operatorname{Base}(A)$ iff $\uparrow \{n_0,\ldots,n_k\} = A$, is computable.
\begin{proof}
We utilize the equivalence between computability with finitely many mindchanges and non-deterministic computation with discrete advice. Given $A \in \mathcal{K}(\mathrm{Av}(\preceq))$, $n_0\ldots n_k \in \mathbb{N}^*$ and a parameter $t \in \mathbb{N}$ we proceed as follows: If $A \subseteq \uparrow \{n_0,\ldots,n_k\}$ is not confirmed within $t$ steps, reject. If we can find some $m_0,\ldots,m_j$ such that $A \subseteq \uparrow \{m_0,\ldots,m_j\}$ but not $\{n_0,\ldots,n_k\} \subseteq \uparrow \{m_0,\ldots,m_j\}$, then reject.

For fixed $A$ and $n_0\ldots n_k \in \mathbb{N}^*$ , there is a parameter $t \in \mathbb{N}$ not leading to a rejection iff $A = \uparrow \{n_0,\ldots,n_k\}$.
\end{proof}
\end{proposition}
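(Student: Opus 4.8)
The plan is to invoke the equivalence between $\nabla$-computability and non-deterministic computation with discrete advice space $\mathbb{N}$ from \cite{paulybrattka}, so that it suffices to describe a non-deterministic procedure whose non-rejected guesses produce exactly the valid generating tuples. Well-definedness of $\operatorname{Base}$ is already handed to us by Proposition \ref{prop:quasiordercompacts}: every $A \in \mathcal{K}(\mathrm{Av}(\preceq))$ has the form $\uparrow\{n_0,\ldots,n_k\}$, so $\operatorname{Base}(A)$ is non-empty and we only need to certify correct guesses. I would let the guessed advice be a pair $(u,t)$, where $u = n_0\ldots n_k \in \mathbb{N}^*$ is a candidate tuple and $t \in \mathbb{N}$ is a step bound.

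The procedure leans on two computable ingredients. First, the inclusion map $\mathalpha{\subseteq} : \mathcal{K}(\mathrm{Av}(\preceq)) \times \mathcal{O}(\mathrm{Av}(\preceq)) \to \mathbb{S}$ lets us semidecide $A \subseteq \uparrow\{n_0,\ldots,n_k\}$, since each upward closure $\uparrow\{n_0,\ldots,n_k\}$ is an open set. Second, decidability of $\preceq$ lets us decide finite inclusions of the shape $\{n_0,\ldots,n_k\} \subseteq \uparrow\{m_0,\ldots,m_j\}$. On a guess $(u,t)$ the machine rejects if $A \subseteq \uparrow u$ is not confirmed within $t$ steps; otherwise it keeps searching for a tuple $m_0\ldots m_j$ with $A \subseteq \uparrow\{m_0,\ldots,m_j\}$ confirmed but $\{n_0,\ldots,n_k\} \not\subseteq \uparrow\{m_0,\ldots,m_j\}$, rejecting as soon as such a witness appears, and otherwise running forever and outputting $u$.

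For correctness I would verify, for fixed $A$ and $u$, that a guess survives iff $A = \uparrow u$. If $A = \uparrow u$, the inclusion $A \subseteq \uparrow u$ holds and is confirmed after finitely many steps, so any sufficiently large $t$ clears the first test; moreover any $\uparrow\{m_0,\ldots,m_j\}$ containing $A = \uparrow u$ also contains $\{n_0,\ldots,n_k\}$, so the second rejection never fires. Conversely, if $A \neq \uparrow u$, then either $A \not\subseteq \uparrow u$, so the first test rejects for every $t$, or $A \subsetneq \uparrow u$ with some $n_i \notin A$, in which case taking $m_0\ldots m_j$ to be the genuine generators of $A$ (which exist by Proposition \ref{prop:quasiordercompacts}) confirms $A \subseteq \uparrow\{m_0,\ldots,m_j\}$ while $\{n_0,\ldots,n_k\} \subseteq \uparrow\{m_0,\ldots,m_j\} = A$ fails, triggering the second rejection for every $t$.

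The \emph{main obstacle} is that the reverse inclusion $\uparrow u \subseteq A$ cannot be checked directly, as positive membership in a compact saturated set is not semidecidable. The device that circumvents this is to refute the reverse inclusion indirectly: because $A$ is itself finitely generated, its true generating set eventually surfaces as a confirmed open overapproximation $\uparrow\{m_0,\ldots,m_j\} = A$, and testing whether this overapproximation omits some $n_i$ — decidable thanks to computability of $\preceq$ — exactly exposes a spurious guess. The step bound $t$ is precisely what turns this ``search for a refutation'' into a finitely-many-mindchanges (non-deterministic with advice) computation rather than a merely $\Sigma^0_2$ one.
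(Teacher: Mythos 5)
Your proposal is correct and follows essentially the same route as the paper: the same reduction to non-deterministic computation with discrete advice, the same two rejection tests (time-bounded confirmation of $A \subseteq \uparrow\{n_0,\ldots,n_k\}$, and the search for a confirmed open overapproximation $\uparrow\{m_0,\ldots,m_j\}$ omitting some $n_i$), with the correctness verification spelled out in somewhat more detail than the paper gives.
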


Before we move on to well-quasiorders, we shall consider sobriety for Alexandrov topologies, in light of Proposition \ref{prop:soberquasipolish} and the overall usefulness of sobriety for the results in Section \ref{sec:quasipolish}. Recall that a countable quasiorder $(X, \preceq)$ is a dcpo if for any sequence $(a_i)_{i \in \mathbb{N}}$ in $X$ such that $a_i \preceq a_{i+1}$ we find that there is some $b \in X$ such that for all $c \in X$: \[b \preceq c \Leftrightarrow \forall n \in \mathbb{N} \quad a_n \preceq c\]

\begin{observation}
$\mathrm{Av}(\preceq)$ is sober iff $(\mathbb{N},\preceq)$ is a dcpo.
\end{observation}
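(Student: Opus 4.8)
The plan is to transport the topological notion of sobriety into the order-theoretic language of $(\mathbb{N},\preceq)$, using the identification of the topology of $\mathrm{Av}(\preceq)$ with the Alexandrov topology recorded in Proposition~\ref{prop:quasiorder-opens}. There the open sets are exactly the $\preceq$-upward closed sets, so the closed sets are precisely the $\preceq$-downward closed sets, and the closure of a point $y$ is the principal ideal $\downarrow y = \{x \mid x \preceq y\}$ (consistent with $\preceq$ being the specialization order). Hence sobriety asks exactly that every nonempty irreducible closed set be of the form $\downarrow b$ for some $b \in \mathbb{N}$.

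The first substantive step is a clean description of the nonempty irreducible closed sets. I would show that a nonempty down-set $C$ is irreducible iff it is $\preceq$-directed: if some $a,b \in C$ had no common upper bound inside $C$, then $C = (C \setminus \uparrow a) \cup (C \setminus \uparrow b)$ writes $C$ as the union of two proper closed subsets (each $C \setminus \uparrow a$ is again a down-set, and $a \notin C\setminus\uparrow a$, $b \notin C\setminus\uparrow b$), while conversely directedness rules out any such splitting. Since $(\mathbb{N},\preceq)$ is countable, every directed down-set $C$ is the down-closure $\downarrow\{a_i\}$ of a $\preceq$-increasing sequence $(a_i)$ cofinal in $C$ (enumerate $C$ and inductively choose upper bounds inside $C$). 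Thus the nonempty irreducible closed sets are exactly the sets $\downarrow\{a_i\}$, and $\downarrow\{a_i\} = \downarrow b$ holds iff $b$ is a greatest element of $\{a_i\}$, i.e. the supremum named in the dcpo condition that moreover lies in the down-set.

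With this dictionary the two directions become manipulations of the biconditional defining the dcpo property. For sober $\Rightarrow$ dcpo, given an increasing sequence $(a_i)$ the set $C = \downarrow\{a_i\}$ is irreducible closed, so sobriety supplies $b$ with $\downarrow b = C$; unfolding $\downarrow b = \downarrow\{a_i\}$ gives $a_i \preceq b$ for all $i$ and, using $b \in \downarrow\{a_i\}$, that $\forall n\, a_n \preceq c$ implies $b \preceq c$, which is precisely the required supremum condition. For dcpo $\Rightarrow$ sober, given an irreducible closed $C$ I would pass to a cofinal increasing sequence, take the supremum $b$ furnished by the dcpo hypothesis, and argue $C = \downarrow b$. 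I expect the main obstacle to sit exactly here: the inclusion $C \subseteq \downarrow b$ is immediate since $b$ upper-bounds the $a_i$, but the reverse inclusion $\downarrow b \subseteq C$ needs $b \in C$, that is, the supremum must be attained as a greatest element of the chain rather than merely existing. Reconciling ``has a supremum'' with ``the supremum lies in $C$'' is the delicate point where directed-completeness must be used in full strength, and is where I would concentrate the argument; the surrounding verifications (that $C\setminus\uparrow a$ is closed, that the cofinal chain exists, and that the unfolded biconditionals align) are routine by comparison.
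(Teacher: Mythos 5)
The paper records this as an Observation with no accompanying proof, so there is no official argument to compare your proposal against; I can only assess it on its own terms. Your dictionary is certainly the intended one: by Proposition~\ref{prop:quasiorder-opens} the opens of $\mathrm{Av}(\preceq)$ are exactly the $\preceq$-upper sets, so closed sets are lower sets, $\overline{\{y\}}=\{x\mid x\preceq y\}$, a nonempty closed set is irreducible iff it is directed (your splitting $C=(C\setminus\mathord{\uparrow}a)\cup(C\setminus\mathord{\uparrow}b)$ is correct), and countability lets you replace directed lower sets by down-closures $\downarrow\{a_i\}:=\{x\mid\exists i\ x\preceq a_i\}$ of increasing $\omega$-chains. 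Your ``sober $\Rightarrow$ dcpo'' direction is complete: from $\downarrow b=\downarrow\{a_i\}$ you get $a_n\preceq b$ for all $n$ together with $b\preceq a_{i_0}$ for some $i_0$, and the biconditional defining the least upper bound follows.

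The gap you flag in ``dcpo $\Rightarrow$ sober'' is not, however, a delicate point to be massaged: it is fatal to the implication as set up, and no appeal to ``directed-completeness in full strength'' will close it. The paper's dcpo condition only asserts that the chain $(a_i)$ has a least upper bound $b$; it does not place $b$ inside $C=\downarrow\{a_i\}$, and in general $b$ is not there. Concretely, order $\mathbb{N}$ as $\omega+1$: let $1\prec 2\prec 3\prec\cdots$ and $n\preceq 0$ for all $n$, with $0\not\preceq n$ for $n\geq 1$. Every increasing sequence here has a least upper bound, so this is a dcpo in the stated sense; yet $C=\{1,2,3,\dots\}$ is a directed lower set, hence a nonempty irreducible closed subset of $\mathrm{Av}(\preceq)$, and it is the closure of no point, since $\overline{\{0\}}=\mathbb{N}$ and each $\overline{\{n\}}$ is finite. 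What sobriety of $\mathrm{Av}(\preceq)$ is actually equivalent to is the stronger requirement that every directed lower set be principal, i.e.\ that every increasing sequence be eventually constant up to order-equivalence (the ascending chain condition), so that the witness $b$ can be taken among the $a_i$ themselves. So you should either prove the equivalence with that condition in place of ``dcpo'', or note explicitly that the right-to-left implication fails for the definition as literally given; as it stands, the inclusion $\downarrow b\subseteq C$ is exactly where your argument---and, on the literal reading, the statement itself---breaks.
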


\subsubsection*{Well-quasiorders and their Alexandrov topologies}

\begin{proposition}
\label{prop:wqo-opens}
\begin{enumerate}
\item If $\preceq$ is computable, then $n_0\ldots n_k \mapsto \uparrow \{n_0,\ldots,n_k\} : \mathbb{N}^* \to \mathcal{O}(\mathrm{Av}(\preceq))$ is computable.
\item $\preceq$ is a well-quasiorder iff $n_0\ldots n_k \mapsto \uparrow \{n_0,\ldots,n_k\} : \mathbb{N}^* \to \mathcal{O}(\mathrm{Av}(\preceq))$ is a surjection.
\item If $\preceq$ is a computable well-quasiorder, then $\operatorname{Base} : \mathcal{O}(\mathrm{Av}(\preceq)) \mto \left (\mathbb{N}^* \right)^\nabla$ where $n_0\ldots n_k \in \operatorname{Base}(U)$ iff $\uparrow \{n_0,\ldots,n_k\} = U$, is well-defined and computable.
\end{enumerate}
\begin{proof}
\begin{enumerate}
\item This is straight-forward, using Proposition \ref{prop:quasiorder-opens} (4).
\item Let us assume that $U \in \mathcal{O}(\mathrm{Av}(\preceq))$ is not of the form $\uparrow \{n_0,\ldots,n_k\}$. Then in particular, $U \neq \emptyset$. Pick some $a_0 \in U$. As $U \neq \uparrow \{a_0\}$, there is some $a_1 \in U \setminus \{a_0\}$. Subsequently, always pick $a_{n+1} \in U \setminus \uparrow \{a_0,\ldots,a_n\}$. Now $(a_n)_{n \in \mathbb{N}}$ satisfies by construction that for $n < m$ never $a_n \preceq a_m$ holds, i.e.~$(a_n)_{n \in \mathbb{N}}$ is a bad sequence, contradicting the hypothesis $\preceq$ were a wqo.

    Conversely, let $(a_n)_{n \in \mathbb{N}}$ be a bad sequence witnessing that $\preceq$ is not a wqo. Assume that $\uparrow \{a_i \mid i \in \mathbb{N}\} = \uparrow \{n_0,\ldots,n_k\}$. As $n_j \in \uparrow \{a_i \mid i \in \mathbb{N}\}$ for $j \leq k$, there is some $i_j$ such that $n_j \succeq a_{i_j}$. Pick $i_\infty > \max_{j \leq k} i_j$. As $a_{i_\infty} \in \uparrow \{n_0,\ldots,n_k\}$ there is some $j_\infty \leq k$ such that $a_{i_\infty} \succeq n_{j_\infty}$. But then $a_{i_{j_\infty}} \preceq a_{i_\infty}$ follows, and since $i_{j_\infty} < i_\infty$ by construction, this contradicts $(a_i)_{i \in \mathbb{N}}$ being a bad sequence.
\item That the map is well-defined follows from (2). The proof that it is computable is similar to the proof of Proposition \ref{prop:basecompactcomputable}. Given some $U \in \mathcal{O}(\mathrm{Av}(\preceq))$, some $n_0\ldots n_k \in \mathbb{N}^*$ and a parameter $t \in \mathbb{N}$, we test whether for all $j \leq k$ it can be verified in at most $t$ steps that $n_j \in U$, otherwise we reject. In addition, we search for some $a \in U$ such that $n_j \npreceq a$ for all $j \leq k$, if we find one, we reject. For fixed $U$, $n_0\ldots n_k$ there is a value of the parameter $t$ not leading to a rejection iff $\uparrow \{n_0,\ldots,n_k\} = U$.
\end{enumerate}
\end{proof}

The following is the computable counterpart to \cite[Proposition 3.1]{goubault2}. It serves in particular as evidence that our definition of $\nabla$-computably Noetherian is not too restrictive:

\begin{theorem}
Let $\preceq$ be a computable well-quasiorder. Then $\mathrm{Av}(\preceq)$ is $\nabla$-computably Noetherian.
\begin{proof}
By combining Proposition \ref{prop:wqo-opens} (3) with Proposition \ref{prop:quasiordercompacts}, we see that for a computable well-quasiorder $\preceq$ the map $\id : \mathcal{O}(\mathrm{Av}(\preceq)) \to \left (\mathcal{K}(\mathrm{Av}(\preceq)) \right )^\nabla$ is computable.
\end{proof}
\end{theorem}
\end{proposition}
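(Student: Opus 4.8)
The plan is to unfold the definition of $\nabla$-computably Noetherian (Definition \ref{def:real}) and exhibit the map $\id_{\mathcal{O},\mathcal{K}} : \mathcal{O}(\mathrm{Av}(\preceq)) \to \left(\mathcal{K}(\mathrm{Av}(\preceq))\right)^\nabla$ as a composite of two maps already proven computable. First I would take an open set $U \in \mathcal{O}(\mathrm{Av}(\preceq))$. Since $\preceq$ is a computable well-quasiorder, Proposition \ref{prop:wqo-opens}(3) supplies the well-defined, computable multivalued map $\operatorname{Base} : \mathcal{O}(\mathrm{Av}(\preceq)) \mto \left(\mathbb{N}^*\right)^\nabla$, which, with finitely many mindchanges, produces a finite tuple $n_0\ldots n_k$ satisfying $\uparrow \{n_0,\ldots,n_k\} = U$. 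Well-definedness here is precisely the content of Proposition \ref{prop:wqo-opens}(2): in the Alexandrov topology of a well-quasiorder every open set is the upper set of some finite set, so a correct tuple always exists.

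Next I would feed the resulting tuple into the computable map $n_0\ldots n_k \mapsto \uparrow \{n_0,\ldots,n_k\} : \mathbb{N}^* \to \mathcal{K}(\mathrm{Av}(\preceq))$ furnished by Proposition \ref{prop:quasiordercompacts}. Because $\nabla$ is a functor, this map lifts to a computable map $\left(\mathbb{N}^*\right)^\nabla \to \left(\mathcal{K}(\mathrm{Av}(\preceq))\right)^\nabla$, and composing it with $\operatorname{Base}$ yields a computable map $\mathcal{O}(\mathrm{Av}(\preceq)) \to \left(\mathcal{K}(\mathrm{Av}(\preceq))\right)^\nabla$. On underlying sets this composite sends $U$ to $\uparrow \{n_0,\ldots,n_k\} = U$, so it genuinely realizes $\id_{\mathcal{O},\mathcal{K}}$; in particular it is single-valued and well-defined despite $\operatorname{Base}$ being multivalued, since every admissible output tuple recovers the same set $U$.

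The only subtlety I expect is the bookkeeping about how $\nabla$ interacts with composition and multivaluedness: one must verify that precomposing the computable single-valued map of Proposition \ref{prop:quasiordercompacts} (after applying $\nabla$) with the $\nabla$-computable choice map $\operatorname{Base}$ again lands in the $\nabla$-computable maps. This is handled by the monad structure of $\nabla$ recorded earlier, together with the equivalence between $\nabla$-computability and non-deterministic computation with advice space $\mathbb{N}$ from \cite{paulybrattka}: a guess that survives the rejection criterion of $\operatorname{Base}$ feeds directly into the deterministic computation of the compact set, and a guess rejected by $\operatorname{Base}$ is rejected for the composite as well. No genuinely new computation is required; the content of the theorem is carried entirely by the two cited propositions, and this composition step is the only place where care is warranted.
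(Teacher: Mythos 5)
Your proposal is correct and takes essentially the same approach as the paper: the paper's proof of the theorem is exactly the composition of the $\nabla$-computable map $\operatorname{Base}$ from Proposition \ref{prop:wqo-opens}(3) with the computable map of Proposition \ref{prop:quasiordercompacts}. The bookkeeping you supply about the monad structure of $\nabla$ and the non-deterministic-advice model is precisely the detail the paper leaves implicit.
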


In future work, one should investigate the hyperspace constructions explored in \cite{goubault2} for whether or not they preserve $\nabla$-computable Noetherianess. Research in reverse mathematics has revealed that the preservation of being Noetherian is already equivalent to $\textrm{ACA}_0$ \cite{shafer3}, which typically indicates that $'$ or some iteration thereof is needed, not merely $\nabla$. However, the computational hardness is found in the reverse direction: Showing that if the hyperspace is not Noetherian, then the original well-quasiorder is not computable. Thus, these results merely provide an upper bound on this question in our setting.

\section{Noetherian spaces as $\nabla$-compact spaces}
\label{sec:nablacompact}

For some hyperspace $P(\mathbf{X})$ of subsets of a represented space $\mathbf{X}$, and a space $B$ of truth values $\bot$, $\top$, we define the map $\textrm{isFull} : P(X) \to B$ by $\textrm{isFull}(X) = \top$ and $\textrm{isFull}(A) = \bot$ for $A \neq X$. We recall from \cite{pauly-synthetic} that a represented space is (computably) compact iff $\textrm{isFull} : \mathcal{O}(\mathbf{X}) \to \mathbb{S}$ is continuous (computable).

The space $\mathbb{S}^\nabla \cong \mathbf{2}^\nabla$ can be considered as the space of $\Delta^0_2$-truth values. In particular, we can identify $\Delta^0_2$-subsets of $\mathbf{X}$ with their continuous characteristic functions into $\mathbf{2}^\nabla$, just as the open subsets are identifiable with their continuous characteristic functions into $\mathbb{S}$. By replacing both occurrences of $\mathbb{S}$ in the definition of compactness (one is hidden inside $\mathcal{O}$) by $\mathbb{S}^\nabla$, we arrive at:

\begin{definition}
A represented space $\mathbf{X}$ is called $\nabla$-compact, iff $\textrm{isFull} : \Delta^0_2(\mathbf{X}) \to \mathbb{S}^\nabla$ is computable.
\end{definition}

\begin{theorem}
\label{theo:nablacompact}
A Quasi-Polish space is $\nabla$-compact iff it is $\nabla$-computably Noetherian (relative to some oracle).
\end{theorem}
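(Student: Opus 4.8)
The plan is to prove the theorem as a chain of equivalences that connects the two notions through an intermediate, more computationally transparent characterization: the semidecidability (with finitely many mindchanges) of inclusion between $\Delta^0_2$-sets, or equivalently between open sets. The key observation is that both $\nabla$-compactness and $\nabla$-computable Noetherianess should reduce to the same underlying ability, namely to recognize with finitely many mindchanges when a $\Delta^0_2$-cover (or a family of open sets) becomes exhaustive. I would therefore aim to show that $\nabla$-compactness is equivalent to condition (3) of the earlier characterization theorem, namely that $\mathalpha{\subseteq} : \mathcal{O}(\mathbf{X}) \times \mathcal{O}(\mathbf{X}) \to \mathbb{S}^\nabla$ is computable, since that condition was already shown equivalent to $\nabla$-computable Noetherianess.

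First I would unpack the definition of $\nabla$-compactness. By definition $\mathbf{X}$ is $\nabla$-compact iff $\textrm{isFull} : \Delta^0_2(\mathbf{X}) \to \mathbb{S}^\nabla$ is computable, where $\Delta^0_2(\mathbf{X}) = \mathcal{O}^\nabla(\mathbf{X})$. Testing whether a $\Delta^0_2$-set $A$ equals all of $X$ is the same as testing the inclusion $X \subseteq A$, which by self-duality of $\Delta^0_2$ is an inclusion test between $\Delta^0_2$-sets. I would show that computing $\textrm{isFull}$ is interderivable with computing the inclusion map $\mathalpha{\subseteq}$ restricted appropriately to the open sets: in one direction, restricting $\textrm{isFull}$ to the open subsets (which embed into the $\Delta^0_2$-subsets) and using the complement structure lets one read off when $U \supseteq V$ as a fullness test on the $\Delta^0_2$-set $U \cup V^C$; in the other direction, the general inclusion of $\Delta^0_2$-sets can be reduced to fullness by forming the appropriate $\Delta^0_2$-combination. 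Crucially, the quasi-Polish hypothesis enters here: we use that every open set can be presented via an effective countable basis (the nice base), and that the Baire-category-theoretic completeness of quasi-Polish spaces makes the relevant $\nabla$-computations converge.

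The main obstacle I expect is handling the interaction between the two liftings $\mathcal{O}^\nabla$ and the representation of $\Delta^0_2$-sets, and in particular verifying that the reductions are genuinely $\nabla$-computable rather than requiring a full jump $'$. The delicate point is that $\Delta^0_2(\mathbf{X})$ is not simply $\mathcal{O}(\mathbf{X})^\nabla$; a $\Delta^0_2$-set is given by a characteristic function into $\mathbf{2}^\nabla$, and one must check that the passage between such a characteristic function and a pair of open sets (an open set together with the complement data) is compatible with the finite-mindchange discipline. I would manage this by working with the non-deterministic-computation-with-advice-space-$\mathbb{N}$ model from \cite{paulybrattka}, since in that model the guess absorbs exactly the bounded number of mindchanges needed, and by invoking the equivalence $\mathbb{S}^\nabla \cong \mathbf{2}^\nabla$ to move freely between truth-value presentations. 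Once inclusion of opens is shown equivalent to $\nabla$-compactness, the theorem follows by citing condition (3) of the characterization theorem, which already establishes that this inclusion property is equivalent to $\nabla$-computable Noetherianess; the relativization to an oracle on the Noetherian side matches the relativization needed to make the base nice, so the ``relative to some oracle'' qualifier transfers cleanly.
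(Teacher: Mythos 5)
Your overall architecture is plausible, and one half of it genuinely works: reducing $\mathalpha{\subseteq} : \mathcal{O}(\mathbf{X}) \times \mathcal{O}(\mathbf{X}) \to \mathbb{S}^\nabla$ to $\textrm{isFull} : \boldd^0_2(\mathbf{X}) \to \mathbb{S}^\nabla$ via the $\Delta^0_2$-set $U \cup V^C$ is correct (the maps $\mathbb{S} \to \mathbb{S}^\nabla$, $\neg : \mathbb{S} \to \mathbb{S}^\nabla$ and $\vee$ on $\mathbb{S}^\nabla$ are all computable), and chaining this with condition (3) of the characterization theorem gives the direction ``$\nabla$-compact $\Rightarrow$ $\nabla$-computably Noetherian'' by a route that is different from, and somewhat more uniform than, the paper's (the paper instead argues non-uniformly: not Noetherian $\Rightarrow$ infinite $\boldd^0_2$-partition $\Rightarrow$ countable meet on $\2^\nabla$ would become computable).

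The gap is in the converse direction, and it is the heart of the matter. You need to show that the $\nabla$-decidability of inclusion between \emph{open} sets suffices to $\nabla$-decide $\textrm{isFull}$ on an \emph{arbitrary} $\boldd^0_2$-set $A$, where $A$ is given only as a continuous characteristic function into $\2^\nabla$. You describe the difficulty as ``the passage between such a characteristic function and a pair of open sets (an open set together with the complement data),'' but a general $\Delta^0_2$-set is not a pair of open sets: by Hausdorff--Kuratowski it sits at an arbitrary (in general transfinite) level of the difference hierarchy, and no finite list of opens can be read off from the realizer of $\chi_A$ by bookkeeping alone. The paper's proof devotes its main technical lemma (Lemma \ref{lemma:noetherianconstructible}) to exactly this point: one extracts the set $W$ of mindchange positions of a realizer of $\chi_A$, well-orders it by the Kleene--Brouwer order, uses the openness of the quasi-Polish representation $\delta_\mathbf{X}$ to associate open sets $U_w$ to these positions, and then uses the $\nabla$-computably Noetherian hypothesis to show the resulting increasing chain of opens stabilizes after \emph{finitely} many steps, so that $A$ is $\nabla$-computably a finite boolean combination of opens, i.e.~an element of $\const(\mathbf{X})^\nabla$. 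Only then does fullness reduce to finitely many inclusion tests between opens (Proposition \ref{prop:constructibleisfull}). Neither the appeal to the nondeterministic-advice model nor the Baire category theorem substitutes for this decomposition; without it, your reduction of $\textrm{isFull}$ on $\boldd^0_2(\mathbf{X})$ to inclusions of opens does not get off the ground.
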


The proof is provided in the following lemmata and propositions.

Recall that construcible subsets of a topological space are finite boolean combinations of open subsets. For a represented space $\mathbf{X}$, there is an obvious represented space $\const(\mathbf{X})$ of constructible subsets of $\mathbf{X}$: A set $A \in \const(\mathbf{X})$ is given by a (Goedel-number of a) boolean expression $\phi$ in $n$ variables, and an $n$-tuple of open sets $U_1,\ldots,U_n$ such that $A = \phi(U_1,\ldots,U_n)$. Straight-forward calculation shows that we can always assume that $\phi(x_1,\ldots,x_{2n}) = (x_1 \setminus x_2) \cup \ldots \cup (x_{2n-1} \setminus x_{2n})$ without limitation of generality.

\begin{lemma}[(\footnote{This is based on an adaption of the proof of the computable Hausdorff-Kuratowski theorem in \cite{pauly-ordinals-arxiv}.})]
\label{lemma:noetherianconstructible}
Let $\mathbf{X}$ be a $\nabla$-computably Noetherian Quasi-Polish space. Then $\id : \boldd^0_2(\mathbf{X}) \to \const(\mathbf{X})^\nabla$ is well-defined and computable.
\begin{proof}
As $\mathbf{X}$ is Quasi-Polish, we can take it to be represented by an effectively open representation $\delta_\mathbf{X} : \Baire \to \mathbf{X}$.

We can consider our input $A \in \boldd^0_2(\mathbf{X})$ to be given by a realizer $f : \Baire \to \{0,1\}$ of a finite mindchange computation. We consider the positions where a mindchange happens, i.e.~those $w \in \mathbb{N}^*$ which if read by $f$ will cause a mindchange to happen before reading any more of the input. W.l.o.g.~we may assume that the realizer makes at most one mindchange at a given position $w \in \mathbb{N}^*$, and the realizer initially outputs $0$ before reading any of the input.

Let $W \subseteq \mathbb{N}^*$ be the set of mindchange positions. To simplify the following, we will view $\varepsilon$ (the empty string in $\mathbb{N}^*$) as being an element of $W$ (this assumption can be justified formally by viewing the initial output of $0$ as being a mindchange from ``undefined'' to $0$). Note that $W$ is decidable by simply observing the computation of $f$. If we denote the prefix relation on $\mathbb{N}^*$ by $\sqsubseteq$, we see that there are no infinite strictly ascending sequences in $W$ with respect to $\sqsubseteq$, since any such sequence would correspond to an input that induces infinitely many mindchanges. It follows that  $(W, \preceq)$ is a computable total well-order with maximal element $\varepsilon$, where $\preceq$ is the (restriction of the) Kleene-Brouwer order and defined as $v \preceq w$ if and only if  (i) $w \sqsubseteq v$, or (ii) $v(n) < w(n)$, where $n$ is the least position where $v$ and $w$ are both defined and disagree.

We first note that $\min :\subseteq \boldd^0_2(W) \to W$ is $\nabla$-computable, where $\min$ is the function mapping each non-empty $S\in \boldd^0_2(W)$ to the $\preceq$-minimal element of $S$. A realizer for $\min$ on input $S$ can test in parallel whether each element of $W$ is in $S$, and output as a guess the $\preceq$-minimal element which it currently believes to be in $S$. Since $\preceq$ is a well-order and it only takes finitely many mindchanges to determine whether or not a given element is in $S$, this computation is guaranteed to converge to the correct answer.

For each $w\in W$, define $U_w := \bigcup_{v \in W, v \preceq w} \delta_\mathbf{X}[v\Baire]$, which is an effectively open subset of $\mathbf{X}$ and a uniform definition because $\preceq$ is decidable. Next, let $\mathbf{1} = \{*\}$ be the totally represented space with a single point, and define $h \colon W \to (\boldd^0_2(W) + \mathbf{1})$ as $h(w) = *$ if $U_w = \mathbf{X}$ and $h(w) = \{v\in W \mid U_v \subsetneq U_w \}$, otherwise. The computability of the mapping $w \mapsto U_w$ and the assumption that $\mathbf{X}$ is $\nabla$-computably Noetherian implies that it is $\nabla$-decidable whether $U_w = \mathbf{X}$, and also that the characteristic function of the set $\{v\in W \mid U_v \subsetneq U_w \}$ is $\nabla$-computable given $w\in W$. It follows that $h$ is well-defined and $\nabla$-computable.

We construct a finite sequence $v_0 \prec \ldots \prec v_k$ in $W$ by defining $v_0= \min(W)$ and $v_{n+1}=\min(h(v_n))$ whenever $h(v_n)\not=*$. This sequence is necessarily finite because the $U_{v_n}$ form a strictly increasing sequence of open sets and $\mathbf{X}$ is Noetherian. Note that the last element $v_k$ in the sequence satisfies $h(v_k)=*$. It follows that the sequence $\langle v_0,\ldots, v_k\rangle \in W^*$ can be $\nabla$-computed from the realizer $f$ because it only involves a finite composition of $\nabla$-computable functions, and it can be $\nabla$-decided when the sequence terminates.

Define $\eta \colon W \to\{0,1\}$ to be the computable function mapping each $w\in W$ to the output of the realizer $f$ after the mindchange upon reading $w$ (thus $\eta(\varepsilon)=0$). For $n\leq k$ define $V_n := U_{v_n} \setminus \bigcup_{m<n} U_{v_m}$. We claim that $A = \bigcup\{V_n \mid 0\leq n \leq k \text{ \& } \eta(v_n)=1\}$, from which it will follow that we can $\nabla$-compute a name for $A \in \const(\mathbf{X})$ from the realizer $f$.

Fix $x\in\mathbf{X}$, and let $w \in W$ be $\preceq$-minimal such that $x\in \delta_\mathbf{X}[w\Baire]$. It follows that $x\in A$ if and only if $\eta(w) = 1$, because $w$ is a prefix of some name $p$ for $x$, and the $\preceq$-minimality of $w$ implies that the realizer $f$ does not make any additional mindchanges on input $p$ after reading $w$. Next, let $n \in \{0,\ldots, k\}$ be the least number satisfying $x\in V_n$. It is clear that $w \preceq v_n$. Conversely, if $n=0$ then $v_n = v_0 \preceq w$ by the $\preceq$-minimality of $v_0$. If $n>0$, then $w \not\preceq v_{n-1}$ hence $x$ is a witness to $U_{v_{n-1}} \subsetneq U_w$, which implies $v_n =h(v_{n-1}) \preceq w$. Thus $w=v_n$, and it follows that $x\in A$ if and only if $x \in \bigcup\{V_n \mid 0\leq n \leq k \text{ \& } \eta(v_n)=1\}$, which completes the proof.
\end{proof}
\end{lemma}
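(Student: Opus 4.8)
The plan is to use the non-deterministic machine model for $\nabla$-computability. A $\boldd^0_2$-subset $A \subseteq \mathbf{X}$ is precisely a set whose characteristic function is computable with finitely many mindchanges, so I take as input a realizer $f : \subseteq \Baire \to \{0,1\}$ that reads a $\delta_\mathbf{X}$-name of $x$ and outputs the bit $[x \in A]$, revising its output only finitely often along any single name (normalising so that $f$ outputs $0$ before reading anything and changes at most once at each position). First I would isolate the set $W \subseteq \mathbb{N}^*$ of finite strings at which $f$ commits a mindchange. Since $\mathbf{X}$ is Quasi-Polish we may take $\delta_\mathbf{X}$ total and open, so $W$ is decidable by watching $f$, and $W$ contains no infinite $\sqsubseteq$-ascending chain -- such a chain would be a prefix-increasing sequence forcing infinitely many mindchanges on its limit name. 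Hence $W$, ordered by (the restriction of) the Kleene--Brouwer order $\preceq$, is a computable well-order.

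The geometric idea is to trade this temporal mindchange structure for a finite increasing chain of open sets. To each $w \in W$ I would attach the effectively open set $U_w := \bigcup_{v \preceq w} \delta_\mathbf{X}[v\Baire]$; decidability of $\preceq$ makes $w \mapsto U_w$ computable into $\mathcal{O}(\mathbf{X})$, and the $U_w$ are $\preceq$-increasing. This is exactly where the hypothesis enters: by the characterization of $\nabla$-computable Noetherianness above, inclusion of open sets is $\nabla$-decidable, so I can $\nabla$-decide both $U_w = \mathbf{X}$ and $U_v \subsetneq U_w$. Using that $\min : \subseteq \boldd^0_2(W) \to W$ is itself $\nabla$-computable -- a realizer guesses the $\preceq$-least element it currently believes to lie in the input set, which converges because $\preceq$ is well-founded -- I would $\nabla$-compute a strictly $\preceq$-increasing finite sequence $v_0 \prec \cdots \prec v_k$ by setting $v_0 = \min W$ and letting $v_{n+1}$ be the $\preceq$-least $v$ with $U_{v_n} \subsetneq U_v$, halting once $U_{v_k} = \mathbf{X}$. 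Classical Noetherianness forces this sequence to be finite, and $\nabla$-decidability of inclusion lets me recognise termination.

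It then remains to read off a constructible name. Writing $\eta(w)$ for the bit $f$ outputs after its mindchange at $w$, and $V_n := U_{v_n} \setminus \bigcup_{m<n} U_{v_m}$, the target identity is \[A = \bigcup \{ V_n \mid 0 \leq n \leq k \text{ and } \eta(v_n) = 1 \},\] whose right-hand side is a finite boolean combination of open sets and hence a legitimate element of $\const(\mathbf{X})$. To verify it I would fix $x$, let $w$ be the $\preceq$-minimal element of $W$ with $x \in \delta_\mathbf{X}[w\Baire]$, observe that $x \in A$ iff $\eta(w) = 1$ (since $f$ makes no further mindchange on a name for $x$ once $w$ has been read), and then show that the unique $n$ with $x \in V_n$ satisfies $v_n = w$.

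I expect the main obstacle to be precisely this last matching step: arguing that the $\preceq$-minimal prefix $w$ witnessing a given $x$ coincides with the chosen representative $v_n$ of the layer containing $x$. The inequality $w \preceq v_n$ is immediate from minimality of $w$, whereas $v_n \preceq w$ needs that, when $x \notin U_{v_{n-1}}$, the point $x$ itself witnesses $U_{v_{n-1}} \subsetneq U_w$, forcing $v_n \preceq w$ by the minimality in the successor rule. Pinning down this interaction between the abstract well-order $W$ and the actual points of $\mathbf{X}$, so that every $x$ lands in exactly one layer carrying the correct label, is where the argument is most delicate, and it is here that genuine $\nabla$-computable Noetherianness (rather than merely classical Noetherianness) is needed to keep the whole construction effective.
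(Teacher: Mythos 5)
Your proposal follows essentially the same route as the paper's proof: the same extraction of the mindchange tree $W$, the same Kleene--Brouwer well-ordering, the same open sets $U_w$, the same $\nabla$-computable $\min$ and finite chain $v_0 \prec \cdots \prec v_k$, and the same final pointwise verification via the $\preceq$-minimal prefix $w$ of a name for $x$. It is correct as sketched; indeed your successor rule (``$v_{n+1}$ is the $\preceq$-least $v$ with $U_{v_n} \subsetneq U_v$'') states the intended condition more accurately than the paper's own (apparently typo-ridden) definition of $h$.
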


\begin{proposition}
\label{prop:constructibleisfull}
Let $\mathbf{X}$ be $\nabla$-computably Noetherian. Then $\textrm{isFull} : \const(\mathbf{X}) \to \2^\nabla$ is computable.
\begin{proof}
It is well-known that the sets in $\const(\mathbf{X})$ have a normal form $A = (U_0 \setminus V_0) \cup \ldots \cup (U_n \setminus V_n)$, and this is obtainable uniformly. Now $A = X$ iff $\forall I \subseteq \{0,\ldots,n\} \ \left ( \bigcap_{j \notin I} V_j \right ) \subseteq \left ( \bigcup_{i \in I} U_i \right )$. To see this, first note that the special case $I = \{0,\ldots,n\}$ yields $X = \bigcup_{i \in I} U_i$. Now consider for each $x \in X$ the statement for $I = \{i \mid x \notin V_i\}$.

In a $\nabla$-computably Noetherian space, we can compute $\left ( \bigcap_{j \notin I} V_j \right ) $ as a compact set, and decide its inclusion in $\left ( \bigcup_{i \in I} U_i \right )$ with finitely many mindchanges. Doing this for the finitely many choices of $I$ is unproblematic, thus yielding the claim.
\end{proof}
\end{proposition}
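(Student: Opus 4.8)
The statement to prove is Proposition~\ref{prop:constructibleisfull}: in a $\nabla$-computably Noetherian space $\mathbf{X}$, the map $\textrm{isFull} : \const(\mathbf{X}) \to \2^\nabla$ is computable. So given a constructible set $A$, presented as a Boolean combination of opens, we must decide $A = X$ with finitely many mindchanges.

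**The approach.**
The plan is to first normalize $A$ into the form $(U_0 \setminus V_0) \cup \dots \cup (U_n \setminus V_n)$, which is known to be obtainable uniformly and computably. Then the task reduces to characterizing $A = X$ purely in terms of inclusions among the $U_i$ and $V_j$. The key combinatorial observation is that $A = X$ holds if and only if for every subset $I \subseteq \{0,\dots,n\}$ we have $\bigcap_{j \notin I} V_j \subseteq \bigcup_{i \in I} U_i$. Once this finitary characterization is in hand, each individual inclusion is of the shape "compact $\subseteq$ open": the intersection $\bigcap_{j \notin I} V_j$ of finitely many closed sets is closed, and in a $\nabla$-computably Noetherian space (where, by Definition~\ref{def:real}, closed sets are compact with finitely many mindchanges), it can be presented as an element of $\mathcal{K}(\mathbf{X})^\nabla$. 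Inclusion of a compact in an open set is semidecidable, so the whole inclusion is decidable with finitely many mindchanges. Conjoining over the finitely many $I$ keeps us within $\2^\nabla$, giving the claim.

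**Proving the characterization.**
The heart of the argument is the equivalence $A = X \iff \forall I\ \bigl(\bigcap_{j \notin I} V_j \subseteq \bigcup_{i \in I} U_i\bigr)$. For the forward direction, I would note that taking $I = \{0,\dots,n\}$ forces $X = \bigcup_i U_i$ (since $A \subseteq \bigcup_i U_i$ always), and more generally argue that each inclusion must hold. For the reverse direction, fix an arbitrary $x \in X$ and set $I_x := \{i \mid x \notin V_i\}$; applying the hypothesis to this particular $I_x$ shows $x$ lies in some $U_i$ with $x \notin V_i$, i.e.\ $x \in U_i \setminus V_i \subseteq A$. Since $x$ was arbitrary, $A = X$.

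**The main obstacle.**
The genuinely nontrivial ingredient is the ability to compute $\bigcap_{j \notin I} V_j$ as a compact set in $\mathcal{K}(\mathbf{X})^\nabla$. This is exactly what $\nabla$-computable Noetherianness buys us: by Definition~\ref{def:real}, $\id_{\mathcal{O},\mathcal{K}} : \mathcal{O}(\mathbf{X}) \to (\mathcal{K}(\mathbf{X}))^\nabla$ is computable, so each closed $V_j$ (equivalently, working via complements and De~Morgan as in the equivalences of the earlier omnibus theorem) yields a compact presentation, and finite intersections of compacts are computably compact. The remaining steps---semideciding compact-in-open inclusion, and bundling finitely many $\nabla$-decisions into a single $\nabla$-decision---are routine, since $\nabla$-computability is closed under finite composition and conjunction. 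Thus the only conceptual content is the normal-form characterization together with the invocation of $\nabla$-compactness of the relevant closed sets; everything else is bookkeeping within the monad $\nabla$.
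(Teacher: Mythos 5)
Your proposal follows essentially the same route as the paper: the same normal form $(U_0\setminus V_0)\cup\ldots\cup(U_n\setminus V_n)$, the same characterization of $A=X$ via the inclusions $\bigcap_{j\notin I}V_j\subseteq\bigcup_{i\in I}U_i$ over all $I\subseteq\{0,\ldots,n\}$ (with the same proof of that equivalence), and the same final step of deciding each inclusion as a compact-in-open test with finitely many mindchanges. One slip worth correcting: in the normal form the $V_j$ are \emph{open} sets, not closed ones, so $\bigcap_{j\notin I}V_j$ is a finite intersection of opens and hence open; this is precisely what lets you apply $\id_{\mathcal{O},\mathcal{K}}:\mathcal{O}(\mathbf{X})\to(\mathcal{K}(\mathbf{X}))^\nabla$ from Definition~\ref{def:real} directly. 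Your phrasing ``closed sets are compact with finitely many mindchanges'' is not what that definition says, and the detour through complements and De~Morgan you gesture at is unnecessary -- the intended computation goes through exactly because the sets in question are open.
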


\begin{proposition}
\label{prop:notnablacompact}
Let $\mathbf{X}$ admit a partition $(A_n)_{n \in \mathbb{N}}$ into non-empty $\boldd^0_2$-sets. Then $\mathbf{X}$ is not $\nabla$-compact.
\begin{proof}
Given some $(t_i)_{i \in \mathbb{N}} \in (\2^\nabla)^\mathbb{N}$, we can compute the set $A := \{x \in \mathbf{X} \mid \exists n \in \mathbb{N} \ x \in A_n \wedge t_n = 1\} \in \boldd^0_2(\mathbf{X})$. If $\mathbf{X}$ were $\nabla$-compact, then applying $\textrm{isFull} : \boldd^0_2(\mathbf{X}) \to \mathbb{S}^\nabla \cong \2^\nabla$ to $A$ would yield a computable realizer of $\bigwedge : (\2^\nabla)^\mathbb{N} \to \2^\nabla$.
\end{proof}
\end{proposition}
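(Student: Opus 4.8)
The plan is to reduce the non-computability of the countable meet $\bigwedge : (\2^\nabla)^{\mathbb{N}} \to \2^\nabla$ (sending the constant sequence $(\top)_{n \in \mathbb{N}}$ to $\top$ and every other sequence to $\bot$) to the supposed computability of $\textrm{isFull}$. From a sequence $(t_n)_{n \in \mathbb{N}} \in (\2^\nabla)^{\mathbb{N}}$ I form the set $A := \bigcup \{A_n \mid t_n = \top\}$, whose characteristic function is $\chi_A(x) = t_{n(x)}$, where $n(x)$ is the unique index with $x \in A_{n(x)}$ (it exists and is unique because $(A_n)_{n}$ is a partition). Since each $A_n$ is non-empty, $A = \mathbf{X}$ holds exactly when every $t_n = \top$; dropping any single cell via some $t_m = \bot$ already removes the non-empty $A_m$ from $A$. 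Hence $\textrm{isFull}(A) = \bigwedge_n t_n$, and if $\textrm{isFull} : \boldd^0_2(\mathbf{X}) \to \2^\nabla$ were computable, precomposing it with $(t_n)_n \mapsto A$ would realize $\bigwedge$.

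First I would verify that $(t_n)_n \mapsto A$ is a well-defined computable (relative to a name of the partition) map into $\boldd^0_2(\mathbf{X})$. The point is that $\chi_A : \mathbf{X} \to \2^\nabla$ can be produced with finitely many mindchanges: running the uniformly $\nabla$-computable membership tests $\chi_{A_n}(x)$ for all $n$ in parallel alongside the $(t_n)_n$, one outputs the current value of $t_m$ for the least index $m$ presently believed to satisfy $x \in A_m$. At any finite stage only finitely many indices carry a non-trivial guess, and once the finitely many tests $\chi_{A_0}(x), \ldots, \chi_{A_{n(x)}}(x)$ have stabilized the selected index stays permanently at $n(x)$; so the output settles on $t_{n(x)}$ after finitely many mindchanges.

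The main obstacle is the non-computability, indeed discontinuity, of $\bigwedge : (\2^\nabla)^{\mathbb{N}} \to \2^\nabla$. Unlike the $\mathbb{S}$-valued case this does not follow from a single commit-and-contradict step, because a name in $\2^\nabla$ may itself change finitely often, so a realizer is entitled to revise its output once after committing. Instead I would force unboundedly many output mindchanges by a flip-flop against any continuous realizer $F$: starting from the all-$\top$ presentation (on which $F$ must eventually output $\top$), I repeatedly drive a fresh component $c_k$ down to $\bot$, making the true value $\bot$ and forcing $F$ to output $\bot$, and then send $c_k$ back up to $\top$ permanently, forcing $F$ to output $\top$ again. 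Each component undergoes only two flips before stabilizing at $\top$, so the limiting input is a legitimate name of the all-$\top$ sequence, yet $F$ alternates between $\top$ and $\bot$ infinitely often, contradicting that a $\2^\nabla$-name may change only finitely many times.

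Finally I would assemble the pieces. Discontinuity of $\bigwedge$ is oracle-independent, so the reduction of the second paragraph — continuous relative to the partition oracle — rules out any computable $\textrm{isFull}$: were it computable, the composite would be a continuous realizer of $\bigwedge$ relative to that oracle, hence $\bigwedge$ would be continuous. This contradiction establishes that $\mathbf{X}$ is not $\nabla$-compact.
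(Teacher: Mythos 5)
Your proof is correct and takes essentially the same route as the paper: the identical reduction $A := \bigcup\{A_n \mid t_n = \top\}$ with $\textrm{isFull}(A) = \bigwedge_n t_n$, using non-emptiness of the cells exactly where it is needed. The paper's one-line proof leaves implicit the two facts you check explicitly --- that $(t_n)_n \mapsto A$ is continuous into $\boldd^0_2(\mathbf{X})$ relative to a name of the partition, and that $\bigwedge : (\mathbf{2}^\nabla)^{\mathbb{N}} \to \mathbf{2}^\nabla$ admits no continuous realizer --- and both of your verifications (the parallel membership search stabilizing after finitely many mindchanges, and the flip-flop diagonalization forcing infinitely many output resets) are sound.
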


\begin{proof}[Proof of Theorem \ref{theo:nablacompact}]
By combining Lemma \ref{lemma:noetherianconstructible} and Proposition \ref{prop:constructibleisfull}, we see that for a $\nabla$-computably Noetherian quasi-Polish space $\mathbf{X}$ the map $\textrm{isFull} : \Delta^0_2(\mathbf{X}) \to \mathbb{S}^\nabla$ is computable, i.e.~it is $\nabla$-compact. Conversely, if $\mathbf{X}$ is not Noetherian, then by Lemma \ref{lemma:converingcoverse} there is a countably-infinite $\Delta^0_2$-partition of $\mathbf{X}$, so by Proposition \ref{prop:notnablacompact}, it cannot be $\nabla$-compact.
\end{proof}

The significance of $\nabla$-compactness and Theorem \ref{theo:nablacompact} lies in the following proposition that supplies the desired quantifier-elimination result. The proof is a straight-forward adaption of the corresponding result for compact spaces and open predicates from \cite{pauly-synthetic} (recalled here as Propositions \ref{prop:exists},\ref{prop:forall}), which in turn has \cite{escardo2} and \cite{nachbin} as intellectual predecessors. Note that as $\neg : \mathbb{S}^\nabla \to \mathbb{S}^\nabla$ is computable, it follows that $\nabla$-compactness and $\nabla$-overtness coincide:

\begin{proposition}
\label{prop:quantifierelimination}
The following are equivalent for a represented space $\mathbf{X}$:
\begin{enumerate}
\item $\mathbf{X}$ is $\nabla$-compact.
\item For any represented space $\mathbf{Y}$, the map $\mathalpha{\forall} : \Delta^0_2(\mathbf{X} \times \mathbf{Y}) \to \Delta^0_2(\mathbf{Y})$ mapping $R$ to $\{y \in \mathbf{Y} \mid \forall x \in \mathbf{X} \ (x,y) \in R\}$ is computable.
\item For any represented space $\mathbf{Y}$, the map $\mathalpha{\exists} : \Delta^0_2(\mathbf{X} \times \mathbf{Y}) \to \Delta^0_2(\mathbf{Y})$ mapping $R$ to $\{y \in \mathbf{Y} \mid \exists x \in \mathbf{X} \ (x,y) \in R\}$ is computable.
\end{enumerate}
\end{proposition}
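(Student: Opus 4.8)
The plan is to prove $1 \Leftrightarrow 2 \Leftrightarrow 3$ by adapting the standard synthetic-topology argument that relates compactness to universal quantification (Propositions \ref{prop:exists} and \ref{prop:forall}), but carried out one level up: everywhere an open set or $\mathbb{S}$-valued predicate appears in the classical argument, I replace it by a $\Delta^0_2$-set or an $\mathbb{S}^\nabla$-valued predicate. The crucial enabling fact, noted just before the statement, is that $\neg : \mathbb{S}^\nabla \to \mathbb{S}^\nabla$ is computable. This self-duality of $\Delta^0_2$ is what lets the \emph{same} hypothesis ($\nabla$-compactness) yield \emph{both} the $\forall$-elimination and the $\exists$-elimination, which is why $2$ and $3$ should turn out equivalent rather than dual-but-distinct as in the purely open case.

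\textbf{The two nontrivial implications ($1 \Rightarrow 2$ and $1 \Rightarrow 3$).}
First I would observe that $\Delta^0_2(\mathbf{X}\times\mathbf{Y})$ is, via currying, computably identified with $\mathcal{C}(\mathbf{Y}, \Delta^0_2(\mathbf{X}))$, since $\Delta^0_2$-sets are exactly the $\nabla$-open sets and the $\nabla$-functor interacts well with the function-space construction (the analogue of $\id : \mathcal{C}(\mathbf{Y},\mathbf{Z})^\nabla \to \mathcal{C}(\mathbf{Y},\mathbf{Z}^\nabla)$ being computable, used already in the proof of the earlier six-way theorem). Given $R \in \Delta^0_2(\mathbf{X}\times\mathbf{Y})$, I read it as a computable map $y \mapsto R_y := \{x \mid (x,y)\in R\} \in \Delta^0_2(\mathbf{X})$. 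Then $\forall(R)$ is the set $\{y \mid R_y = X\}$, whose characteristic function into $\mathbb{S}^\nabla$ is exactly $y \mapsto \textrm{isFull}(R_y)$; composing the computable map $y \mapsto R_y$ with $\textrm{isFull} : \Delta^0_2(\mathbf{X}) \to \mathbb{S}^\nabla$ (computable by $\nabla$-compactness) gives a continuous characteristic function into $\mathbb{S}^\nabla$, i.e.\ exhibits $\forall(R)$ as a $\Delta^0_2$-subset of $\mathbf{Y}$ computably in $R$. This yields $1 \Rightarrow 2$. For $1 \Rightarrow 3$ I would give the dual argument, replacing $\textrm{isFull}$ by $\textrm{isNonEmpty}$: the point is that $\nabla$-compactness already delivers $\textrm{isNonEmpty}$ as well, because $\textrm{isNonEmpty} = \neg \circ \textrm{isFull} \circ (\text{complement})$ at the level of $\Delta^0_2$-sets, and complementation of $\Delta^0_2$-sets together with $\neg : \mathbb{S}^\nabla \to \mathbb{S}^\nabla$ are both computable by self-duality. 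Then $\exists(R)$ has characteristic function $y \mapsto \textrm{isNonEmpty}(R_y)$, handled the same way.

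\textbf{The reverse implications.}
For $2 \Rightarrow 1$ (and symmetrically $3 \Rightarrow 1$) I would specialize to the terminal-type witness: take $\mathbf{Y} = \mathbf{1}$ to be the one-point space, so that $\Delta^0_2(\mathbf{X}\times\mathbf{1}) \cong \Delta^0_2(\mathbf{X})$ and $\Delta^0_2(\mathbf{1}) \cong \mathbb{S}^\nabla$ (since $\mathbf{1}$ has exactly the two $\Delta^0_2$-subsets $\emptyset$ and $\{*\}$, named by $\bot$ and $\top$). Under these identifications $\forall : \Delta^0_2(\mathbf{X}\times\mathbf{1}) \to \Delta^0_2(\mathbf{1})$ is literally $\textrm{isFull} : \Delta^0_2(\mathbf{X}) \to \mathbb{S}^\nabla$, so its computability is precisely $\nabla$-compactness. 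The $3 \Rightarrow 1$ direction runs identically through $\textrm{isNonEmpty}$ and then back to $\textrm{isFull}$ via $\neg$.

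\textbf{Main obstacle.}
I expect the only real subtlety to be the bookkeeping of the currying/uncurrying step at the $\nabla$-level: one must verify that the identification of $\Delta^0_2(\mathbf{X}\times\mathbf{Y})$ with continuous $\mathbf{Y}$-indexed families of $\Delta^0_2(\mathbf{X})$-sets is genuinely computable and not merely continuous, which rests on $\nabla$ being a (computable) monad commuting suitably with finite products and function spaces. This is exactly where the phrase ``a straight-forward adaption of the corresponding result for compact spaces'' does its work, and I would lean on the already-established facts that $f : \mathbf{X}\to\mathbf{Y}^\nabla$ is computable iff $f:\mathbf{X}^\nabla\to\mathbf{Y}^\nabla$ is, plus computability of currying, to discharge it without a separate lemma.
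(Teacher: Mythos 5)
Your proof is correct and follows exactly the route the paper intends: the paper gives no explicit proof but states that the result is a straight-forward adaptation of Propositions \ref{prop:exists} and \ref{prop:forall} with $\mathbb{S}$ replaced by $\mathbb{S}^\nabla$, using the computability of $\neg : \mathbb{S}^\nabla \to \mathbb{S}^\nabla$ to make $\nabla$-compactness and $\nabla$-overtness coincide, which is precisely your currying-plus-$\textrm{isFull}$/$\textrm{isNonEmpty}$ argument together with the specialization $\mathbf{Y}=\mathbf{1}$ for the converse. Your only over-caution is the worry about $\nabla$ commuting with products and exponentials: since $\Delta^0_2(\mathbf{X})$ is by definition $\mathcal{C}(\mathbf{X},\mathbb{S}^\nabla)$, the identification $\Delta^0_2(\mathbf{X}\times\mathbf{Y})\cong\mathcal{C}(\mathbf{Y},\Delta^0_2(\mathbf{X}))$ is plain cartesian closedness and needs no further lemma.
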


\begin{corollary}
\label{corr:quantifier}
A formula built from $\Delta^0_2$-predicates, boolean operations and universal and existential quantification over Noetherian quasi-Polish spaces defines itself a $\Delta^0_2$-predicate.
\end{corollary}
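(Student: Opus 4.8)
The plan is to argue by structural induction on the formula, using closure of the $\Delta^0_2$-predicates under the boolean operations for the propositional connectives and invoking Theorem \ref{theo:nablacompact} together with Proposition \ref{prop:quantifierelimination} for the two quantifiers. A formula $\phi$ is built up from finitely many atomic $\Delta^0_2$-predicates by applying negation, finite conjunction and disjunction, and quantifiers $\forall x \in \mathbf{X}$, $\exists x \in \mathbf{X}$ ranging over Noetherian quasi-Polish spaces $\mathbf{X}$. To each subformula I would associate the $\boldd^0_2$-subset of the product of the spaces of its free variables that it defines, and then show by induction on the construction that this subset is indeed $\boldd^0_2$.

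The base case is immediate: an atomic $\Delta^0_2$-predicate is by hypothesis a $\boldd^0_2$-subset of the relevant product space. For the boolean steps I would use that $\boldd^0_2(\mathbf{Z})$ is closed under complements, finite unions and finite intersections: since $\neg : \mathbb{S}^\nabla \to \mathbb{S}^\nabla$ is computable and $\mathbb{S}^\nabla \cong \2^\nabla$ carries all the boolean operations as (relatively) computable maps, applying them to the characteristic functions $\chi_A : \mathbf{Z} \to \2^\nabla$ of the subformulae again yields a $\boldd^0_2$-subset. Some care must be taken to pad free-variable lists so that the subsets being combined live in a common product space, but this is routine.

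The quantifier step is the heart of the argument. Suppose a subformula $\psi(x, \vec y)$ has already been shown to define a $\boldd^0_2$-subset $R \subseteq \mathbf{X} \times \mathbf{Y}$, where $\mathbf{X}$ is the Noetherian quasi-Polish space being quantified over and $\mathbf{Y}$ is the product of the spaces of the remaining free variables. As recorded after Definition \ref{def:real}, $\mathbf{X}$ is $\nabla$-computably Noetherian relative to some oracle, hence by Theorem \ref{theo:nablacompact} it is $\nabla$-compact relative to that oracle. Proposition \ref{prop:quantifierelimination} then supplies, relative to the same oracle, computable maps $\forall, \exists : \Delta^0_2(\mathbf{X} \times \mathbf{Y}) \to \Delta^0_2(\mathbf{Y})$ sending $R$ to the sets defined by $\forall x \in \mathbf{X} \ \psi$ and $\exists x \in \mathbf{X} \ \psi$ respectively. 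Consequently both of these sets are $\boldd^0_2$-subsets of $\mathbf{Y}$, which closes the induction.

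The main obstacle I anticipate is purely one of bookkeeping with oracles rather than of mathematical substance. Theorem \ref{theo:nablacompact} only delivers $\nabla$-compactness of a Noetherian quasi-Polish space relative to some oracle, different quantifiers in the formula may a priori call for different oracles, and each boolean step is itself only (relatively) computable. Since the formula is finite it mentions only finitely many spaces and performs only finitely many operations, so I would fix, once and for all, a single oracle computing a suitable representation of every space occurring and witnessing $\nabla$-computable Noetherianness of each quantified space simultaneously. Relative to this one oracle the whole inductive construction becomes computable, and the predicate defined by $\phi$ is computable into $\2^\nabla$ relative to it, i.e.\ it is a $\boldd^0_2$-predicate as claimed. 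As the statement is the boldface one, no uniformity in the oracle is required.
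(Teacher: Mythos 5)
Your proposal is correct and follows exactly the route the paper intends: the corollary is stated as an immediate consequence of Theorem \ref{theo:nablacompact} combined with Proposition \ref{prop:quantifierelimination}, applied inductively over the structure of the formula, with the boolean steps handled by computability of the boolean operations on $\mathbb{S}^\nabla \cong \2^\nabla$ and the oracle relativization handled just as you describe. The paper gives no further detail, so your write-up, including the care about fixing a single oracle for the finitely many spaces involved, is a faithful (indeed slightly more explicit) rendering of the intended argument.
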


\begin{corollary}
Let $\mathbf{X} = \mathbf{X}_0 \times \ldots \times \mathbf{X}_n$ be a Noetherian Quasi-Polish space. If a subset $U \subseteq \mathbf{X}_0$ is definable using a finite expression involving open predicates in $\mathbf{X}$, boolean operations, and existential and universal quantification, then $U$ is definable using a finite expression involving open predicates in $\mathbf{X}_0$ and boolean operations.
\begin{proof}
Combine Corollary \ref{corr:quantifier} and Lemma \ref{lemma:noetherianconstructible}.
\end{proof}
\end{corollary}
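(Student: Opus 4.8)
The plan is to carry out the two-step reduction suggested by the proof skeleton, translating the syntactic notion of ``definable by a finite expression with quantifiers'' into the synthetic language of $\boldd^0_2$-predicates. First I would record the structural facts about the factors. Assuming all factors are non-empty (otherwise the statement is trivial), each coordinate projection $\pi_i \colon \mathbf{X} \to \mathbf{X}_i$ is a continuous surjection, so by the proposition that a continuous surjective image of a Noetherian quasi-Polish space is again Noetherian quasi-Polish, every $\mathbf{X}_i$ is Noetherian quasi-Polish; in particular each $\mathbf{X}_i$ is $\nabla$-compact relative to some oracle by Theorem \ref{theo:nablacompact}.

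Next I would verify that every atom of the defining expression is a $\boldd^0_2$-predicate on the product. An open subset $V$ of $\mathbf{X}$ is $\Sigma^0_1$, hence $\Sigma^0_2$; and its complement, being closed, is $\Sigma^0_2$ as a single difference of opens, so $V$ is also $\Pi^0_2$. Thus $V \in \boldd^0_2(\mathbf{X})$. Boolean operations preserve membership in $\boldd^0_2$ because $\mathbb{S}^\nabla \cong \2^\nabla$ is closed under the corresponding operations on truth values. Finally, each quantifier ranges over one of the factors $\mathbf{X}_0,\ldots,\mathbf{X}_n$, each of which we have just seen to be Noetherian quasi-Polish; by Corollary \ref{corr:quantifier} (equivalently, by applying Proposition \ref{prop:quantifierelimination} to the relevant $\nabla$-compact factor, treating the remaining coordinates as the parameter space) such a quantifier sends $\boldd^0_2$-predicates to $\boldd^0_2$-predicates. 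Eliminating the quantifiers one coordinate at a time therefore leaves a $\boldd^0_2$-predicate whose only free variable lies in $\mathbf{X}_0$; that is, $U \in \boldd^0_2(\mathbf{X}_0)$.

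The final step is to invoke Lemma \ref{lemma:noetherianconstructible} for the space $\mathbf{X}_0$. Since $\mathbf{X}_0$ is Noetherian quasi-Polish, the map $\id \colon \boldd^0_2(\mathbf{X}_0) \to \const(\mathbf{X}_0)^\nabla$ is well-defined, so in particular $U$ is a constructible subset of $\mathbf{X}_0$, i.e.\ a finite boolean combination of open subsets of $\mathbf{X}_0$. This is precisely a finite expression built from open predicates in $\mathbf{X}_0$ and boolean operations, as required.

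I expect the genuine obstacle to lie in the first paragraph rather than in the last two: the two cited results apply only once we know that the quantified-over factors \emph{and} the target factor $\mathbf{X}_0$ are themselves Noetherian quasi-Polish, so the whole reduction stands or falls on the preservation of Noetherianity and quasi-Polishness under the coordinate projections. The remaining bookkeeping --- that opens lie in $\boldd^0_2$, that boolean operations and single-coordinate quantifiers stay within $\boldd^0_2$, and that the unique free variable ends up in $\mathbf{X}_0$ --- is routine once the machinery of Sections \ref{sec:background} and \ref{sec:nablacompact} is in place, and we only need the non-uniform content of these results since the conclusion concerns definability rather than uniform computability.
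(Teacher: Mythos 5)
Your proposal is correct and follows exactly the route the paper intends: its proof is literally ``combine Corollary~\ref{corr:quantifier} and Lemma~\ref{lemma:noetherianconstructible},'' and you have filled in precisely the routine steps that combination requires (opens are $\boldd^0_2$, quantifier elimination one factor at a time, then constructibility of the resulting $\boldd^0_2$-set in $\mathbf{X}_0$). Your added observation that the factors inherit the Noetherian quasi-Polish property via the projections is a detail the paper leaves implicit but is handled by its own proposition on continuous surjective images, so nothing is missing.
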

\section{Other compactness and overtness notions}
\label{sec:othernotions}
It is a natural question whether further lifted counterparts of compactness and overtness might coincide with familiar notions from topology. We will in particular explore this for the $'$-endofunctor from Definition \ref{def:jump}. One such result was already obtained before:

\begin{theorem}[{\cite[Theorem 42]{paulydebrecht2-lics}}]
A Polish space is $\sigma$-compact iff it is $'$-overt.
\end{theorem}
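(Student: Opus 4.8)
The plan is to prove the two implications separately, exploiting that $'$-overtness of $\mathbf{X}$ is exactly the assertion that, given a $\Sigma^0_2$-subset $U\subseteq\mathbf{X}$ presented by its characteristic function $\chi_U:\mathbf{X}\to\mathbb{S}'$, the truth value ``$U\neq\emptyset$'' is recognizable in $\mathbb{S}'$, i.e.\ is itself a $\Sigma^0_2$-fact uniformly in $U$. Throughout I would work up to an oracle, which accounts for the ``relative to some oracle'' reading in the effective setting; the underlying topological statement is oracle-free. Conceptually this is the $'$-analogue of the overtness/$\exists$-preservation pattern recorded in Proposition \ref{prop:exists}, but I will argue the two directions directly rather than through a quantifier-elimination lemma.

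For the forward direction, suppose $\mathbf{X}=\bigcup_{n}K_n$ with each $K_n$ compact, and (replacing $K_n$ by $K_0\cup\dots\cup K_n$) that the $K_n$ are increasing; a sequence of names for the $K_n\in\mathcal{K}(\mathbf{X})$ serves as the oracle. First I would convert the input $\Sigma^0_2$-set into an $F_\sigma$-presentation $U=\bigcup_m A_m$ with $A_m\in\mathcal{A}(\mathbf{X})$, using the standard computable identity $\mathcal{O}'(\mathbf{X})\to\mathcal{C}(\mathbb{N},\mathcal{A}(\mathbf{X}))$ available for (quasi-)Polish spaces. Then $U\neq\emptyset$ iff $\exists n\,\exists m\ A_m\cap K_n\neq\emptyset$. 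The key point is that $A_m\cap K_n$ is computable as an element of $\mathcal{K}(\mathbf{X})$, since the intersection of a closed set with a compact set is compact and uniformly computable, and for compact sets non-emptiness is $\Pi^0_1$: indeed $K=\emptyset$ iff $K\subseteq\emptyset$, and $\mathalpha{\subseteq}:\mathcal{K}(\mathbf{X})\times\mathcal{O}(\mathbf{X})\to\mathbb{S}$ is computable. Hence ``$U\neq\emptyset$'' is a countable disjunction of $\Pi^0_1$-facts, i.e.\ $\Sigma^0_2$, which is exactly the data of a continuous map $\textrm{isNonEmpty}:\mathcal{O}'(\mathbf{X})\to\mathbb{S}'$.

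For the converse I would argue by contraposition using descriptive-set-theoretic structure theory. If the Polish space $\mathbf{X}$ is not $\sigma$-compact, then by Hurewicz's theorem it contains a closed subset $C$ homeomorphic to Baire space $\Baire$. I would then combine two facts. First, $\Baire$ is not $'$-overt: through a universal $\Sigma^0_2$-set the non-emptiness predicate on $\mathcal{O}'(\Baire)$ is the $\Baire$-projection of a $\Sigma^0_2$-set, hence $\Sigma^1_1$-complete, so it cannot be $\Sigma^0_2$ uniformly in the code, contradicting what $'$-overtness would give. Second, $'$-overtness is inherited by closed subspaces: for closed $C$ the pushforward $\mathcal{O}'(C)\to\mathcal{O}'(\mathbf{X})$, $V\mapsto V$, is computable, because a $\Sigma^0_2$-subset of the subspace $C$ is the intersection of a $\Sigma^0_2$-subset of $\mathbf{X}$ with the $\Pi^0_1$-set $C$ (and $\Pi^0_1\subseteq\Sigma^0_2$, which is closed under binary intersection), while the open complement of $C$ lets one force the output to the negative value off $C$; this pushforward visibly preserves and reflects non-emptiness. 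Composing $\textrm{isNonEmpty}_{\mathbf{X}}$ with it would make $C\cong\Baire$ $'$-overt, contradicting the first fact. Therefore a $'$-overt Polish space must be $\sigma$-compact.

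The main obstacle is the converse direction, and within it the appeal to the Hurewicz dichotomy to produce a closed copy of $\Baire$; the two supporting ingredients—that $\Sigma^0_2$-non-emptiness over $\Baire$ is genuinely $\Sigma^1_1$ (so strictly beyond $\Sigma^0_2$), and that $'$-overtness descends to closed subspaces—are the technical heart and need the effective bookkeeping sketched above to be carried out carefully. By contrast, the forward direction is essentially routine once the $F_\sigma$-decomposition and the computability of closed-meets-compact intersections are in hand.
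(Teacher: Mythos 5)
The paper does not actually prove this statement --- it is imported verbatim from the cited reference \cite{paulydebrecht2-lics} --- so there is no in-paper proof to compare against; judged on its own, your argument is correct and follows the same strategy as the proof in the cited source (in particular, the hard direction there is likewise obtained from the Hurewicz $K_\sigma$-dichotomy producing a closed copy of $\Baire$, combined with the $\Sigma^1_1$-completeness of non-emptiness for closed subsets of $\Baire$). Your forward direction is the standard one: $\cap : \mathcal{A}(\mathbf{X}) \times \mathcal{K}(\mathbf{X}) \to \mathcal{K}(\mathbf{X})$ and $\mathrm{isEmpty} : \mathcal{K}(\mathbf{X}) \to \mathbb{S}$ are computable, so non-emptiness of $\bigcup_m A_m$ becomes a countable disjunction of closed truth values, which lands in $\mathbb{S}'$. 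Two small points deserve care. First, the reduction of a $\Sigma^0_2$-set to an $F_\sigma$-presentation genuinely uses metrizability (an open set in a quasi-Polish space need not be $F_\sigma$), so your parenthetical ``(quasi-)Polish'' should be dropped --- this is precisely why the theorem is stated for Polish spaces only. Second, in the closed-subspace inheritance step, a realizer of $\chi_V : C \to \mathbb{S}'$ may produce an invalid (non-converging) output on names of points outside $C$; your ``force the output to $\bot$ off $C$'' remark is the right fix, but it must be implemented as: copy the possibly-garbage output until the open complement of $C$ fires, then overwrite all columns with a canonical $\bot$-continuation so that every column still converges. With that bookkeeping made explicit, the proof is complete.
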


For our remaining investigations, we will rely on the following lemma:

\begin{lemma}
\label{lem:genericounterexample}
Let $d$ be a computable endofunctor such that $\bigvee : \mathcal{C}(\mathbb{N},d\mathbb{S}) \to \mathbb{S}$ and $\wedge : d\mathbb{S} \times d\mathbb{S} \to d\mathbb{S}$ are computable, but $\bigwedge : \mathcal{C}(\mathbb{N},d\mathbb{S}) \to \mathbb{S}$ is not continuous. Then if $\mathbf{X}$ admits a partition into countably-infinitely many non-empty $d$-open subsets, $\mathbf{X}$ is not $d$-compact.
\begin{proof}
Let $(U_n)_{n \in \mathbb{N}}$ be a partition into countably-many $d$-open sets. From the computability of $\bigvee : \mathcal{C}(\mathbb{N},d\mathbb{S}) \to \mathbb{S}$ and and $\wedge : d\mathbb{S} \times d\mathbb{S} \to d\mathbb{S}$  we can conclude that $(b_n)_{n \in \mathbb{N}} \mapsto \bigcup_{\{i \in \mathbb{N} \mid b_i = \top\}} U_i : \mathcal{C}(\mathbb{N},d\mathbb{S}) \to \mathcal{O}^d(\mathbf{X})$ is continuous. If $\mathbf{X}$ were $d$-compact, then we could apply $\textrm{isFull} : \mathcal{O}^d(\mathbf{X}) \to d\mathbb{S}$ to the resulting set, and would obtain $\bigwedge_{n \in \mathbb{N}} b_n \in d\mathbb{S}$, in contradiction to our assumption.
\end{proof}
\end{lemma}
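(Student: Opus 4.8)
The plan is to argue by contraposition, turning a hypothetical $\textrm{isFull}$ into a continuous realizer for the countable conjunction of $d$-truth-values and thereby contradicting the hypothesis. This is the $d$-lifted analogue of the classical fact that an infinite discrete space fails to be compact because $\textrm{isFull}$ would decide the (non-continuous) countable conjunction; so the whole argument is a reduction $\bigwedge \leq \textrm{isFull}$.

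First I would fix the given partition $(U_n)_{n \in \mathbb{N}}$ of $\mathbf{X}$ into non-empty $d$-open pieces and define a reduction map $\Phi : \mathcal{C}(\mathbb{N}, d\mathbb{S}) \to \mathcal{O}^d(\mathbf{X})$ by $\Phi((b_n)) = \bigcup_{\{i \mid b_i = \top\}} U_i$, i.e.\ the piece $U_i$ is gated into the union precisely when $b_i = \top$. The characteristic function of $\Phi((b_n))$ at a point $x$ is $\bigvee_i (\chi_{U_i}(x) \wedge b_i)$, and this is exactly where the two computability hypotheses enter: the computable binary meet $\wedge : d\mathbb{S} \times d\mathbb{S} \to d\mathbb{S}$ produces each gated term $\chi_{U_i}(x) \wedge b_i$ as a $d$-open predicate, and the computability of the countable join $\bigvee$ collects the gated family into a single element of $\mathcal{O}^d(\mathbf{X})$. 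Combined with cartesian closedness (currying, uncurrying and composition being computable), this should yield that $\Phi$ is continuous, indeed computable.

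Next I would establish the key equivalence $\Phi((b_n)) = X$ iff every $b_i = \top$. The right-to-left direction is immediate since $(U_n)$ covers $\mathbf{X}$, so all gated pieces are present and their union is $X$. For the converse I would exploit \emph{both} that the pieces are non-empty and that they are pairwise disjoint: if some $b_{i_0} = \bot$, pick any $x \in U_{i_0}$; disjointness forces $x$ out of every other piece, hence out of every gated set, so $\Phi((b_n)) \neq X$. It follows that $\textrm{isFull} \circ \Phi$ computes precisely the countable conjunction $(b_n) \mapsto \bigwedge_n b_n$.

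Finally I would close the loop: were $\mathbf{X}$ $\,d$-compact, $\textrm{isFull} : \mathcal{O}^d(\mathbf{X}) \to d\mathbb{S}$ would be computable and a fortiori continuous, so composing with the continuous $\Phi$ would produce $\bigwedge_n b_n \in d\mathbb{S}$ continuously, contradicting the assumed non-continuity of $\bigwedge$; hence $\textrm{isFull}$ is not even continuous and $\mathbf{X}$ is not $d$-compact. I expect the main obstacle to be the continuity claim for $\Phi$ rather than the combinatorial equivalence: one must check that the gated countable union genuinely lands in $\mathcal{O}^d(\mathbf{X})$ and that the codomain types line up, so that the composite $\textrm{isFull} \circ \Phi$ realizes exactly the conjunction operation whose discontinuity is hypothesized. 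Once $\Phi$ is pinned down, the equivalence and the concluding contradiction are routine.
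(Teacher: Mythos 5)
Your proposal is correct and follows essentially the same route as the paper: the same gating map $(b_n)\mapsto\bigcup_{\{i\mid b_i=\top\}}U_i$ built from the computable $\wedge$ and $\bigvee$, followed by composing with $\textrm{isFull}$ to realize $\bigwedge$ and derive the contradiction. You merely spell out the details the paper leaves implicit (the pointwise formula $\bigvee_i(\chi_{U_i}(x)\wedge b_i)$ and the use of non-emptiness plus disjointness in showing $\Phi((b_n))=X$ iff all $b_i=\top$), which is fine.
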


\begin{proposition}
A quasi-Polish space is $\nabla$-compact iff it is $'$-compact.
\begin{proof}
Assume that a quasi-Polish space $\mathbf{X}$ is $\nabla$-compact. Let $U \in \mathcal{O}'(\mathbf{X})$ be a $\Sigma^0_2$-set. This can be effectively written as $U = \bigcup_{n \in \mathbb{N}} U_n$ with disjoint $\Delta^0_2$-sets $U_n$. As $\bigvee : \mathcal{C}(\mathbb{N},\mathbb{S}^\nabla) \to \mathbb{S}'$ is computable, we can compute $\bigvee_{N \in \mathbb{N}} \textrm{isFull}(\bigcup_{n \leq N} U_n)$ using $\textrm{isFull} : \mathcal{O}^\nabla(\mathbf{X}) \to \mathbb{S}^\nabla$. If this yields $\top$, then clearly $U = X$. Conversely, if $U = X$, then by Theorem \ref{theo:finitecover}, already $U = U_N = X$ for sufficiently large $N$, hence the procedure yields $\top$. It follows that $\mathbf{X}$ is $'$-compact.

Now assume that a quasi-Polish space $\mathbf{X}$ is not $\nabla$-compact. Then by Theorem \ref{theo:nablacompact} it is not Noetherian, hence by Theorem \ref{theo:finitecover} there is an infinite $\Delta^0_2$-cover $(U_n)_{n \in \mathbb{N}}$ without a finite subcover. We can refine this into a $\Delta^0_2$-partition (which of course is also a $\Sigma^0_2$-partition). Note that Lemma \ref{lem:genericounterexample} applies to $'$, hence $\mathbf{X}$ is not $'$-compact.
\end{proof}
\end{proposition}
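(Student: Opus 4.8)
The plan is to prove both implications, and in each direction to route through the fact that $\nabla$-compactness forces the topological Noetherian property (via Theorem \ref{theo:nablacompact} together with the observation that every Noetherian quasi-Polish space is $\nabla$-computably Noetherian relative to some oracle), and hence the finite-subcover characterisations of Theorem \ref{theo:finitecover}. The two endofunctors differ only in that $\nabla\mathbb{S} = \mathbb{S}^\nabla$ carries $\Delta^0_2$-truth values while $'\mathbb{S} = \mathbb{S}'$ carries $\Sigma^0_2$-truth values, so the whole argument is about controlling how a $\Sigma^0_2$-fullness question relates to a countable family of $\Delta^0_2$-fullness questions.

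For the direction from $\nabla$-compact to $'$-compact, I would start from an arbitrary $U \in \mathcal{O}'(\mathbf{X})$, i.e.\ a $\Sigma^0_2$-subset, and decompose it effectively as a countable union $U = \bigcup_{n \in \mathbb{N}} U_n$ of $\Delta^0_2$-sets (the standard normal form, which one may take disjoint). Each finite partial union $\bigcup_{n \leq N} U_n$ is again $\Delta^0_2$, so I would apply the hypothesised computable map $\textrm{isFull} : \Delta^0_2(\mathbf{X}) \to \mathbb{S}^\nabla$ to it, and then take the countable join over $N$ using $\bigvee : \mathcal{C}(\mathbb{N}, \mathbb{S}^\nabla) \to \mathbb{S}'$, which is computable because a countable union of $\Delta^0_2$-truth values is $\Sigma^0_2$. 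Correctness reduces to matching this join with $\textrm{isFull}(U)$: if some partial union is already full the output is $\top$ and indeed $U = X$; conversely, if $U = X$ then, since $\nabla$-compactness makes $\mathbf{X}$ Noetherian, Theorem \ref{theo:finitecover} yields a finite subcover, so $\bigcup_{n \leq N} U_n = X$ for some $N$ and the join returns $\top$.

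For the converse I would argue contrapositively. If $\mathbf{X}$ is not $\nabla$-compact, then by Theorem \ref{theo:nablacompact} it is not Noetherian, so by Theorem \ref{theo:finitecover} (equivalently Lemma \ref{lemma:converingcoverse}) there is a countably-infinite $\Delta^0_2$-partition of $\mathbf{X}$ into non-empty pieces; as every $\Delta^0_2$-set is $\Sigma^0_2$, this is in particular a partition into non-empty $'$-open sets. I would then invoke the generic counterexample Lemma \ref{lem:genericounterexample} with $d = {'}$, whose hypotheses I must verify for the jump endofunctor: $\bigvee$ and binary $\wedge$ on $\mathbb{S}'$ are computable since countable unions and finite intersections of $\Sigma^0_2$-sets remain $\Sigma^0_2$, whereas $\bigwedge : \mathcal{C}(\mathbb{N}, \mathbb{S}') \to \mathbb{S}$ is not continuous, being the $\Sigma^0_2$-analogue of $\lpo$ (a countable intersection of $\Sigma^0_2$-sets need not be $\Sigma^0_2$). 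The lemma then gives that $\mathbf{X}$ is not $'$-compact.

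The main obstacle I anticipate is the correctness of the forward construction, and specifically its dependence on the finite-subcover property: the procedure can only emit $\top$ when some finite partial union is already full, so the entire equivalence hinges on $U = X$ forcing such a finite witness, which is precisely what Noetherianness (inherited from $\nabla$-compactness through Theorems \ref{theo:nablacompact} and \ref{theo:finitecover}) supplies. The second, more routine, point is the typing of the collapse operations, namely that joining countably many $\Delta^0_2$-truth values lands exactly in $\mathbb{S}' \cong \Sigma^0_2$ rather than at a higher level, and dually that the $\bigwedge$-obstruction genuinely fails to be continuous for the endofunctor $'$.
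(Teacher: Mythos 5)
Your proposal is correct and follows essentially the same route as the paper: the forward direction decomposes a $\Sigma^0_2$-set into disjoint $\Delta^0_2$-pieces, applies $\textrm{isFull}$ to the finite partial unions, collapses via $\bigvee : \mathcal{C}(\mathbb{N},\mathbb{S}^\nabla) \to \mathbb{S}'$, and justifies correctness through the finite-subcover property of Theorem \ref{theo:finitecover}; the converse passes through Theorem \ref{theo:nablacompact} to a countably-infinite $\Delta^0_2$-partition and invokes Lemma \ref{lem:genericounterexample} for $d={'}$. You merely make explicit two steps the paper leaves implicit (that Noetherianness is inherited from $\nabla$-compactness before citing Theorem \ref{theo:finitecover}, and the verification of the hypotheses of Lemma \ref{lem:genericounterexample}), which is fine.
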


\begin{proposition}
A quasi-Polish space is $''$-compact relative to some oracle iff it is finite.
\begin{proof}
A finite quasi-Polish space is $''$-compact relative to an oracle enumerating the points, as $\wedge : \mathbb{S}'' \times \mathbb{S}'' \to \mathbb{S}''$ is computable. Conversely, any singleton $\{x\}$ in a quasi-Polish space is $\Pi^0_2$, hence also $\Sigma^0_3$. If $\mathbf{X}$ is an infinite quasi-Polish space, we can thus find a proper countably-infinite $\Sigma^0_3$-partition. By Lemma \ref{lem:genericounterexample}, it can then not be $''$-compact.
\end{proof}
\end{proposition}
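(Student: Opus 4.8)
The plan is to establish the two directions separately, with ``finite $\Rightarrow$ $''$-compact'' being routine and the converse carrying all the content. For the easy direction, suppose $\mathbf{X}$ has finitely many points $x_1,\ldots,x_m$ and take as oracle an enumeration of names for all of them. A $''$-open set $A$ is presented by its characteristic function $\chi_A : \mathbf{X} \to \mathbb{S}''$, and by finiteness $A = X$ holds exactly when $x_i \in A$ for every $i$; hence $\textrm{isFull}(A) = \bigwedge_{i=1}^m \chi_A(x_i)$ is a \emph{finite} conjunction. Feeding each $x_i$ (supplied by the oracle) into $\chi_A$ and combining the finitely many resulting values through the computable operation $\wedge : \mathbb{S}'' \times \mathbb{S}'' \to \mathbb{S}''$ realises $\textrm{isFull}$, so $\mathbf{X}$ is $''$-compact relative to this oracle. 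This uses nothing beyond finiteness.

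For the converse I would argue contrapositively, showing that an infinite quasi-Polish space is not $''$-compact relative to \emph{any} oracle. The tool is Lemma~\ref{lem:genericounterexample} applied to the double-jump endofunctor: since its conclusion is the non-continuity of $\textrm{isFull}$, which is an oracle-independent fact, it rules out $''$-compactness relative to every oracle. First I would check its hypotheses for the double jump. Binary meet $\wedge : \mathbb{S}'' \times \mathbb{S}'' \to \mathbb{S}''$ is computable because finite intersections of $\Sigma^0_3$-sets are $\Sigma^0_3$, and the countable join is computable because countable unions of $\Sigma^0_3$-sets are $\Sigma^0_3$; on the other hand the countable meet $\bigwedge$ is not continuous, for the same reason as one level down -- a countable conjunction of genuinely $\Sigma^0_3$ truth values realises a $\Pi^0_4$-predicate, which is not $\Sigma^0_3$. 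With the hypotheses in place, it remains only to feed the lemma a suitable partition.

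The crux, and the step I expect to be the main obstacle, is producing a \emph{covering} partition of $\mathbf{X}$ into countably-infinitely many non-empty $\Sigma^0_3$-sets (equivalently, $''$-open sets) for an arbitrary infinite quasi-Polish space. Singletons are $\Pi^0_2$ by Corollary~\ref{corr:quasisingleton}, hence $\Sigma^0_3$, which settles the countably infinite case by partitioning $\mathbf{X}$ into all of its singletons. The difficulty is the uncountable case: countably many singletons never cover an uncountable space, and the complement of a countable set of points is only $\Pi^0_3$, not $\Sigma^0_3$, so one cannot simply add a single leftover piece. I would circumvent this by observing that an uncountable quasi-Polish space cannot be Noetherian -- a Noetherian quasi-Polish space is sober (Proposition~\ref{prop:soberquasipolish}), countably based and Noetherian, hence countable by Theorem~\ref{theo:characnoethquasi}. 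Being non-Noetherian, such an $\mathbf{X}$ admits a countably-infinite $\Delta^0_2$-partition by Lemma~\ref{lemma:converingcoverse}, and the inclusion $\Delta^0_2 \subseteq \Sigma^0_3$ upgrades it to the required $\Sigma^0_3$-partition. In either case Lemma~\ref{lem:genericounterexample} then applies and $\mathbf{X}$ fails to be $''$-compact, which completes the contrapositive and hence the proposition.
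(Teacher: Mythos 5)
Your proof is correct and follows the same overall strategy as the paper: the forward direction via a finite conjunction of evaluations using the computable $\wedge : \mathbb{S}'' \times \mathbb{S}'' \to \mathbb{S}''$ and an oracle enumerating the points, and the converse by exhibiting a countably-infinite partition into non-empty $\Sigma^0_3$-pieces and invoking Lemma \ref{lem:genericounterexample}. Where you genuinely diverge is in how the partition is produced. The paper simply notes that singletons are $\Pi^0_2$, hence $\Sigma^0_3$, and asserts that an infinite quasi-Polish space "thus" admits a countably-infinite $\Sigma^0_3$-partition; this is immediate only when $\mathbf{X}$ is countable, since for uncountable $\mathbf{X}$ the leftover piece $X \setminus \{x_n \mid n \in \mathbb{N}\}$ is a countable intersection of $\Sigma^0_2$-sets, i.e.~only $\Pi^0_3$ in general (singletons need not be closed in a non-$T_1$ quasi-Polish space, so one cannot argue as in the Polish case that the remainder is $\Pi^0_2$). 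Your patch — an uncountable quasi-Polish space is not Noetherian by Theorem \ref{theo:characnoethquasi} together with Proposition \ref{prop:soberquasipolish}, hence admits a countably-infinite $\Delta^0_2 \subseteq \Sigma^0_3$ partition by Lemma \ref{lemma:converingcoverse} — closes this gap cleanly and is a genuine improvement in rigor over the paper's one-line justification. Your observation that the lemma's conclusion (non-continuity of $\textrm{isFull}$) is oracle-independent, so that the contrapositive rules out $''$-compactness relative to \emph{every} oracle, is also exactly the point needed and worth making explicit.
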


One could also start the search from the other direction, by exploring some variations on compactness from topology. We conclude by listing some potentially promising examples.

\begin{definition}[\cite{menger}]
A topological space $\mathbf{X}$ is called \emph{Menger}, if for any sequence $(\mathcal{U}_{n \in \mathbb{N}})$ of open covers of $\mathbf{X}$ there exists finite subsets $\mathcal{V}_n \subseteq \mathcal{U}_n$ such that $\bigcup_{n \in \mathbb{N}} \mathcal{V}_n$ is an open cover of $\mathbf{X}$.
\end{definition}

It had been asked by \name{Hurewicz} whether Menger spaces might coincide with the $\sigma$-compact ones \cite{hurewicz3}. The two notions were conditionally separated by \name{Miller} and \name{Fremlin} \cite{fremlin}, and then unconditionally by Bartoszynski and Tsaban \cite{tsaban}. A similar property is named after Hurewicz: In a Hurewicz space, the cover $\bigcup_{n \in \mathbb{N}} \mathcal{V}_n$ needs to have the property that any point belongs to all but finitely many sets from the cover. Both the Menger and the Hurewicz property are special cases of selection principles as identified by \name{Scheepers} \cite{scheepers}. These might provide a fruitful hunting ground for further topological properties corresponding to relativized compactness or overtness notions.

\section*{Acknowledgements}
We are grateful to the participants of the Dagstuhl-seminar \emph{Well-quasi orders in Computer Science} for valuable discussions and inspiration (cf.~\cite{dagstuhlwqo}). In particular we would like to thank Jean Goubault-Larrecq and Takayuki Kihara. Boaz Tsaban suggested the investigation of Menger and Hurewicz spaces in this context to us. The second author thanks Paul Shafer for explaining results pertaining to Noetherian spaces in reverse mathematics.

\bibliographystyle{eptcs}
\bibliography{../../spieltheorie}
\end{document}